\theoremstyle{plain}
  \newtheorem{thm}{Theorem}[section]
  \newtheorem*{thm*}{Theorem}
  \newtheorem{prop}[thm]{Proposition}
  \newtheorem{lem}[thm]{Lemma}
  \newtheorem{cor}[thm]{Corollary}
  \newtheorem{conj}[thm]{Conjecture}
\theoremstyle{definition}
  \newtheorem{dfn}[thm]{Definition}
  \newtheorem{exmp}[thm]{Example}
\theoremstyle{remark}
  \newtheorem{rem}[thm]{Remark}
\def\ZZ{\mathbb Z}
\def\QQ{\mathbb Q}
\def\CC{\mathbb C}
\def\cS{\mathcal{S}}
\def\cR{\mathcal{R}}
\def\0{\mathbf 0}
\def\Arf{\mathsf{Arf}}
\def\cl{\mathsf{cl}}
\def\pal{\mathsf{pal}}
\def\val{\mathsf{val}}
\def\<{\langle}
\def\>{\rangle}
\def\too{\longrightarrow}
\def\oot{\longleftarrow}
\def\one{\overline{1}}
\def\zero{\overline{0}}
\def\qb{\overline{q}}
\numberwithin{equation}{section} 
\date{}
\begin{document}

\title[Arithmetic on  $q$-deformed rational numbers]
{Arithmetic on $q$-deformed rational numbers}

\author[T. Kogiso]{Takeyoshi Kogiso}
\address{Department of Mathematics, Josai University, Saitama, 350-0295, Japan.} 
\email{kogiso@josai.ac.jp}

\author[K. Miyamoto]{Kengo Miyamoto}
\address{Department of Computer and Information Science, Ibaraki University, Ibaraki, 316-8511, Japan.}
\email{kengo.miyamoto.uz63@vc.ibaraki.ac.jp}

\author[X. Ren]{Xin Ren}
\address{Department of Mathematics, Kansai University, Osaka, 564-8680, Japan.}
\email{k641241@kansai-u.ac.jp}

\author[M. Wakui]{Michihisa Wakui} 
\address{Department of Mathematics,
Kansai University, Osaka, 564-8680, Japan.}
\email{wakui@kansai-u.ac.jp}

\author[K. Yanagawa]{Kohji Yanagawa} 
\address{Department of Mathematics, 
Kansai University, Osaka, 564-8680, Japan.}
\email{yanagawa@kansai-u.ac.jp}

\subjclass[2020]{05A30, 11A55, 16G20, and 57K14}
\keywords{$q$-deformed rational numbers, $q$-continued fractions, quivers, and Jones polynomials, and rational knots.}
\thanks{T. Kogiso is supported by JSPS Grant-in-Aid for Scientific Research (C) 21K03169.}
\thanks{K. Miyamoto is supported by JSPS Grant-in-Aid for Early-Career Scientists 20K14302, 24K16885 and	Grant-in-Aid for Scientific Research (A) 23H00479.}
\thanks{X. Ren is supported by the Kansai University Grant-in-Aid for progress of research in graduate course, 2023.}
\thanks{M. Wakui is supported by the Kansai University Grant-in-Aid for progress of research in graduate course, 2023.}
\thanks{K. Yanagawa is supported by JSPS Grant-in-Aid for Scientific Research (C) 19K03456 and by the Kansai University Grant-in-Aid for progress of research in graduate course, 2023.}

\begin{abstract}
Recently, Morier-Genoud and Ovsienko introduced a $q$-{deformation} of rational numbers. 
More precisely, for an irreducible fraction $\frac{r}s>0$, they constructed coprime polynomials $\cR_{\frac{r}s}(q), ~\cS_{\frac{r}s}(q) \in \ZZ[q]$ with  $\cR_{\frac{r}s}(1)=r, \cS_{\frac{r}s}(1)=s$. 
Their theory has a rich background and many applications. 
By definition, if $r \equiv r' \pmod{s}$, then $\cS_{\frac{r}s}(q)=\cS_{\frac{r'}s}(q)$. 
We show that $rr'{\equiv} -1 \pmod{s}$ implies $\cS_{\frac{r}s}(q)=\cS_{\frac{r'}s}(q)$, and it is conjectured that the converse holds if $s$ is prime (and $r \not \equiv r' \pmod{s}$). 
We also show that $s$ is a multiple of 3 (resp. 4) if and only if $\cS_{\frac{r}s}(\zeta)=0$ for $\zeta=(-1+\sqrt{-3})/2$ (resp. $\zeta=i$). 
We give applications to the representation theory of quivers of type $A$ and the Jones polynomials of rational links. 
\end{abstract}

\maketitle 
\tableofcontents

\section{Introduction}
The $q$-deformation of a positive integer $n$, which is  given by 
$$[n]_q=\frac{1-q^n}{1-q}=q^{n-1}+q^{n-2}+\cdots +q+1,$$
is a very classical subject of mathematics. Recently, 
Morier-Genoud and Ovsienko \cite{MO} introduced the $q$-deformation $[\alpha]_q$ of a rational number $\alpha$ based on some combinatorial properties of rational numbers. 
They further extended this notion to arbitrary real numbers \cite{MO22} by some number-theoretic properties of irrational numbers. 
These works are related to many directions including Teichm\"uller spaces \cite{VL}, the 2-Calabi-Yau category of type $A_2$~\cite{BBL}, the Markov-Hurwitz approximation theory \cite{K, SM, LMOV, XR}, the modular group and Picard groups {\cite{LSM, MOV, O21}}, Jones polynomials of rational knots \cite{Kogiso-Wakui, LeeSchiffler, NT, MO, BBL, Ren22}, and combinatorics on fence posets \cite{TBEC21, OEK1, OR}.

For an irreducible fraction $\frac{r}s >0$,  we have 
$$\left[\dfrac{r}{s}\right]_q =\dfrac{\cR_{\frac{r}s}(q)}{\cS_{\frac{r}s}(q)} \ \  \text{for} \ \ \cR_{\frac{r}s}(q), \cS_{\frac{r}s}(q)\in \ZZ_{>0}[q] \  \ \text{with} \ \ \cR_{\frac{r}s}(1)=r \ \  \text{and}  \ \ \cS_{\frac{r}s}(1)=s.$$ 
There are many ways to compute $[\alpha]_q$ (see Section \ref{sec-2} for details). 
For example, we have  
\[
\left[\frac{6}{5}\right]_q=\frac{[6]_q}{[5]_q}=\frac {q^5+q^4+q^3+q^2+q+1}{q^4+q^3+q^2+q+1}
, \quad
\left[\frac{7}{5}\right]_q
=\frac {{q}^{4}+2\,{q}^{3}+2\,{q}^{2}+q+1}{{q}^{3}+2\,{q}^{2}+q+1},
\]
\noindent 
and observe that the denominators of  $\frac{6}{5}$ and $\frac{7}{5}$ are the same 5, but the denominator polynomials of their $q$-deformation are different. 
In general, the following problem arises. When dose the  equation $\cS_{\frac{r}s}(q)=\cS_{\frac{r'}s}(q)$ hold for two irreducible fractions $\frac{r}s$ and $\frac{r'}s$? By definition, we have $\cS_{\alpha+n}(q)=\cS_{\alpha}(q)$ for $n \in \ZZ$, and hence $r \equiv r' \pmod{s}$ implies $\cS_{\frac{r}s}(q)=\cS_{\frac{r'}s}(q)$. However, there are more subtle relations.

\begin{exmp}
(1) The table of $\cS_\alpha(q)$ for irreducible fractions $\alpha$ of the form $\frac{r}{17}$ is the following.  
\begin{align*}
A&=[17]_q={q}^{16}+{q}^{15}+\cdots +q+1
\\
B&={q}^{9}+2\,{q}^{8}+2\,{q}^{7}+
2\,{q}^{6}+2\,{q}^{5}+2\,{q}^{4}+2\,{q}^{3}+2\,{q}^{2}+q+1
\\
C&={q}^{7}+2\,{q}^{6}+3\,{q}^{5}+3\,{q}^{4}
+3\,{q}^{3}+2\,{q}^{2}+2\,q+1
\\
D&={q}^{7}+2\,{q}^{6}+3\,{q}^{5}+4\,{q}^{4}
+3\,{q}^{3}+2\,{q}^{2}+q+1
\\
E&={q}^{6}+2\,{q}^{5}+4\,{q}^{4}+4\,{q}^{3}+
3\,{q}^{2}+2\,q+1
\end{align*}

\smallskip

\begin{center}
{\renewcommand{\arraystretch}{1.5}
\begin{tabular}{|c|c|c|c|c|c|c|c|c|c|}
\hline
$r \pmod{17}$&
$1,16$
&$2,8$
&$3,11$
&$4$
&$5,10$
&$6,14$
&$7,12$
&$9,15$
&13\\
\hline
$\mathcal{S}_{\frac{r}{17}}(q)$
&$A$&$B$&$C$&$D$&$E$&$C^\vee$&$E^\vee$&$B^\vee$&$D^\vee$\\
\hline
\end{tabular}
}
\end{center}

\smallskip

\noindent Here, for $f(q)\in {\mathbb Q}[q]$,   
$f^{\vee}(q)$ denotes its reciprocal polynomial $q^{\mathsf{deg}(f)} f(q^{-1})$.
For example, we have  
$$E^\vee={q}^{6}+2\,{q}^{5}+3\,{q}^{4}+4\,{q}^{3}+4\,{q}^{2}+
2\,q+1.$$

\smallskip

(2) Next, we give the table of $\cS_\alpha(q)$ for irreducible fractions $\alpha$ of the form $\frac{r}{23}$.  
\begin{align*}
A&=[23]_q=q^{22}+q^{21}+\cdots + q+1\\
B&= {q}^{12}+2\,{q}^{11}+2\,{q}^{10}+2\,{q}^{9}+2\,{q}^{8}+2\,{q}^{7}+2\,{q}^{6}+2\,{q}^{5}+2\,{q}^{4}+2\,{q}^{3}+2\,{q}^{2}+q+1\\
C&={q}^{9}+2\,{q}^{8}+3\,{q}^{7}+3\,{q}^{6}+3\,{q}^{5}+3\,{q}^{4}+3\,{q}^{3}+2\,{q}^{2}+2\,q+1\\
D&= {q}^{8}+2\,{q}^{7}+3\,{q}^{6}+4\,{q}^{5}+4\,{q}^{4}+3\,{q}^{3}+3\,{q}^{2}+2\,q+1\\
E&=  {q}^{7}+3\,{q}^{6}+4\,{q}^{5}+5\,{q}^{4}+4\,{q}^{3}+3\,{q}^{2}+2\,q+1\\
F&= {q}^{7}+2\,{q}^{6}+4\,{q}^{5}+5\,{q}^{4}+5\,{q}^{3}+3\,{q}^{2}+2\,q+1 
\end{align*}

\smallskip

\begin{center}
{\renewcommand{\arraystretch}{1.5}
\begin{tabular}{|c|c|c|c|c|c|c|c|c|c|c|c|}
\hline
\tiny{$r \pmod{23}$}
&$1,22$
&$2,11$
&$3,15$
&$4,17$
&$5,9$
&$6,19$
&$7,13$
&$8,20$
&$10,16$
&$14,18$
&$12,21$\\
\hline
$\mathcal{S}_{\frac{r}{23}}(q)$ 
&$A$
&$B$
&$C$
&$D$
&$E$
&$D^\vee$
&$F$
&$C^\vee$
&$F^\vee$
&$E^\vee$
&$B^\vee$
\\
\hline
\end{tabular}
}
\end{center}
\end{exmp}

From these examples, the third author of the present paper and Takeshi Sakurai, who were supervised by the first author, proposed the following conjecture in their master theses \cite{Ren21, Sa21}. This is the main motivation of the present paper.

\begin{conj}[Arithmetic conjecture]
\label{1-7}
Let $p$ be an odd prime integer. 
For two positive integers $a, b$ which are coprime to $p$, 
$\mathcal{S}_{\frac{a}{p}}(q)=\mathcal{S}_{\frac{b}{p}}(q)$ if and only if  
$ab\equiv -1 \pmod{p}$ or $a\equiv b \pmod{p}$. 
\end{conj}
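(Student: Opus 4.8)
The plan is to treat the two implications of the stated equivalence separately. I must be candid that only the forward implication, ``$ab\equiv -1\pmod p$ or $a\equiv b\pmod p$ $\Rightarrow$ $\cS_{\frac ap}(q)=\cS_{\frac bp}(q)$'', can be proved unconditionally; moreover it holds for an arbitrary modulus $s$, not only for a prime. The reverse implication is exactly where the hypothesis that $p$ is prime must be used, and where the genuine difficulty lies, which is why the statement is posed as a conjecture.

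For the forward implication I would work with the matrix presentation of the $q$-continued fraction of Morier--Genoud and Ovsienko \cite{MO}, in which the pair $\cR_{\frac rs}(q),\cS_{\frac rs}(q)$ appears among the entries of a product of elementary $2\times 2$ matrices over $\ZZ[q]$ read off from the continued fraction expansion of $\frac rs$. Two symmetries of this presentation are the key. The first is an \emph{inversion} symmetry: reversing the continued fraction expansion transposes the matrix product and, at the level of residues modulo $s$, replaces $a$ by $a^{-1}$; keeping track of the accompanying powers of $q$ shows that it sends $\cS_{\frac as}(q)$ to its reciprocal polynomial, so that $\cS_{\frac{a^{-1}}{s}}(q)=\cS_{\frac as}(q)^{\vee}$. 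The second is a \emph{reflection} symmetry arising from the $q\mapsto q^{-1}$ functional equation behind the recursion $[\alpha+1]_q=q[\alpha]_q+1$; it relates $\frac as$ to $\frac{s-a}{s}$ and yields $\cS_{\frac{s-a}{s}}(q)=\cS_{\frac as}(q)^{\vee}$. Composing the two gives $\cS_{\frac{-a^{-1}}{s}}(q)=\bigl(\cS_{\frac as}(q)^{\vee}\bigr)^{\vee}=\cS_{\frac as}(q)$, and since $b\equiv -a^{-1}\pmod s$ is precisely the condition $ab\equiv -1\pmod s$, this --- together with the trivial case $a\equiv b\pmod s$, which follows from the periodicity $\cS_{\alpha+1}=\cS_\alpha$ --- establishes the forward implication for every $s$. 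The only real work is the bookkeeping of the powers of $q$ needed to turn ``reversal'' and ``$q\mapsto q^{-1}$'' into the reciprocal polynomial on the nose; this is routine but delicate, and is cleanest in the matrix language.

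For the converse I would recast it as an injectivity statement. The involution $a\mapsto -a^{-1}$ on $(\ZZ/p\ZZ)^\times$ partitions the nonzero residues into orbits of size at most two (with the two fixed points occurring exactly when $p\equiv 1\pmod 4$), and the forward implication shows that $a\mapsto \cS_{\frac ap}(q)$ is constant on each orbit. The conjecture is therefore equivalent to the assertion that the polynomials attached to distinct orbits are pairwise distinct. This distinctness is the crux and the main obstacle: proving that two integer polynomials are unequal requires a separating invariant, and no single elementary invariant appears to separate all orbits simultaneously. The natural line of attack is to evaluate $\cS_{\frac ap}(q)$ at carefully chosen roots of unity --- indeed the later results of the paper, which detect $3\mid s$ and $4\mid s$ via the vanishing of $\cS_{\frac rs}$ at $\zeta=(-1+\sqrt{-3})/2$ and at $\zeta=i$, are precisely evaluations of this kind --- and to combine such values with the degree and with the coefficient sum $\cS_{\frac ap}(1)=p$. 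Primality of $p$ should enter essentially here, since for composite $s$ the clean orbit description can break down and spurious coincidences appear; assembling a family of invariants that provably separates all orbits for prime $p$ is the step I expect to remain genuinely hard.
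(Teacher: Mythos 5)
Your treatment matches the paper's: the statement is indeed only a conjecture, the paper proves just the sufficiency (valid for arbitrary modulus, Theorem~\ref{sufficiency}), and it does so by exactly your decomposition --- showing that $a\mapsto -a$ and $a\mapsto a^{-1}$ each send $\cS_{\frac{a}{p}}(q)$ to its reciprocal polynomial (Propositions~\ref{dual1} and~\ref{dual2}, proved via continued-fraction reversal/matrix transposition and the quiver-closure identity $\cl(Q(0,\mathbf{a}))=\cl(Q(\mathbf{a}))^{\vee}$), so their composite $a\mapsto -a^{-1}$ fixes $\cS_{\frac{a}{p}}(q)$. Your assessment of the necessity as the genuinely open part, where primality must enter and where root-of-unity evaluations give only partial separating invariants, is also consistent with the paper, which exhibits composite counterexamples such as $\cS_{\frac{5}{24}}(q)=\cS_{\frac{11}{24}}(q)$ and leaves the prime case unresolved.
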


The necessity part of Conjecture~\ref{1-7} really requires the assumption that $p$ is prime.  In fact, $\cS_{\frac{5}{24}}(q)=\cS_{\frac{11}{24}}(q)$ holds, while $5 \cdot 11 \not \equiv -1 \pmod{24}$. See Subsection~\ref{subsec-2-2} for detail. 
On the other hand, without the assumption that $p$ is prime, we can show the sufficiency (so the essential part of the conjecture is its necessity). We give two different proofs in Sections \ref{sec-3} and \ref{sec-4}. 

The proof given in Section~\ref{sec-3} is rather direct. 
Combining an argument here and a combinatorial result in \cite{OR}, we can show that $\cS_{\frac{r}s}(q)$ is palindromic if and only if $r^2 \equiv 1 \pmod{s}$. Recall that  $f(q) \in \ZZ[q]$ is said to be palindromic, if $f^\vee(q)=f(q)$.  

The proof given in Section~\ref{sec-4} uses the $q$-deformation $(a,b)_p \in \ZZ[q]$ of a pair $(a,b)$ of positive and coprime integers introduced in the previous work \cite{Wakui_LDTandNT} of the fourth author.
In Section~\ref{sec-5} 
we study on behavior of $\mathcal{R}_{\alpha}(q)$ and $\mathcal{S}_{\alpha}(q)$ under the operations $\mathfrak{i}, \mathfrak{r}, \mathfrak{ir}$ on the positive rational numbers $\alpha$, which are introduced in \cite{Kogiso-Wakui}. 

For a given rational number $\alpha \in \QQ \cap (1, \infty)$, the regular continued fraction expansion of $\alpha$ determines a quiver $Q$ of type $A$.
In \cite[Thoerem 4]{MO}, they provided a method for computing $\mathcal{R}_\alpha(q)$ (and $\mathcal{S}_\alpha(q)$) by using combinatorial enumeration with the quiver $Q$.
Specifically, the coefficients of $q^{k}$ in $\mathcal{R}_{\alpha}(q)$ coincides with the number of marking of circles to $k$ vertices of $Q$ so that there is no arrow from an unmarked vertex to a marked vertex.
Thus, one representation-theoretic view of $\mathcal{R}_{\alpha}(q)$ is that it counts the number of submodules of the most dimensional indecomposable module $M$ over the path algebra $\mathsf{k}Q$, where $\mathsf{k}$ is a field.
Namely, the coefficients of $q^{k}$ in $\mathcal{R}_{\alpha}(q)$ is equal to the number of $k$-dimensional submodules of $M$.
In Section \ref{sec-6}, we give a formula for computing $\mathcal{R}_\alpha(q)$.

In Section \ref{sec-7}, we extend the result \cite[Proposition~1.8]{MO} which states that $\cS_\alpha(-1)$ and $\cR_\alpha(-1)$ belong to  $\{0, \pm 1\}$. First, we will show that 
$$\cR_\alpha(\omega), \, \cS_\alpha(\omega) \in \{0, \pm 1, \pm \omega, \pm \omega^2 \} \quad \text{for} \quad \omega =\frac{-1+\sqrt{-3}}2$$
and 
$$\cR_\alpha(i), \, \cS_\alpha(i) \in \{0, \pm 1, \pm i, \pm(1+i), \pm (1-i)\}.$$
Hence, for an irreducible fraction $\frac{r}s$, $\cS_{\frac{r}s}(q) \in \ZZ[q]$ can be divided by $[3]_q=q^2+q+1$ (resp. $[4]_q=q^3+q^2+q+1$) if and only if $s$ is a multiple of 3 (resp. 4). Inspired by this fact, we conjecture that if $p$ is a prime integer then $\cS_{\frac{a}{p}}(q) \in \ZZ[q]$ is irreducible over $\QQ$ (Conjecture~\ref{irreducibility}).

In Section \ref{sec-8}, we give an application of the observations in the previous section. For the rational link $L(\alpha)$ associated with $\alpha \in \QQ$ (for example, see \cite{KL}), the Jones polynomial $V_{L(\alpha)}(t) \in \ZZ[t^{\pm1}] \cup t^{\frac12}\ZZ[t^{\pm 1}]$ has the normalized form $J_\alpha(q) \in \ZZ[q]$ (\cite{LeeSchiffler}). Since $J_{\alpha }(q)$ for $\alpha >1$ can be expressed using $\mathcal{R}_{\alpha }(q)$ and $\mathcal{S}_{\alpha }(q)$ 
 by \cite[Proposition A.1]{MO}, one can study the special values of $J_\alpha(q)$ at $q=-1, i, \pm \omega$. There are several classical results on the special values of the Jones polynomials $V_L(t)$ for general links $L$, and most of the facts given in this section easily follow from these results. However, we give a new explanation using $q$-deformed rationals.

\section*{Acknowledgments} 
{We would like to thank Professor Mikami Hirasawa for helpful comments and references on results on existence of the Arf invariants of rational links. 
We would also like to thank Professors Sophie Morier-Genoud, Valentin Ovsienko,  and Taizo Kanenobu for their encouragements of our research. 
We are grateful to the referee for useful suggestions for improving the manuscript. }

\section{Preliminaries}\label{sec-2}
Throughout this paper, for a real number $x\in \mathbb{R}$, the symbols $\lceil x \rceil$ and $\lfloor x \rfloor$ mean
$\lceil x \rceil=\min\{n\in\mathbb{Z}\mid x\leq n\}$ and
$\lfloor x \rfloor=\max\{n\in\mathbb{Z}\mid n\leq x\}$, respectively.
For an irreducible fraction $\frac{r}s$, we always assume that $s >0$. We regard $0 =\frac{0}{1}$ as an irreducible fraction.

\subsection{\texorpdfstring{$q$}{q}-deformed rational numbers}
\label{subsec-2-1}

In this subsection,  we review some basics on $q$-deformations for rational numbers introduced by Morier-Genoud and Ovsienko {\cite{MO,MOV}}.
A rational number $\alpha\in \mathbb{Q}\cap ({1},\infty)$ can be represented by 
\[ \alpha=a_1+\dfrac{1}{a_2+\dfrac{1}{\raisebox{0.5cm}{$\ddots$}+\dfrac{1}{a_{n}}}}\]
with 
${a_1},\ldots, a_{n}\in\mathbb{Z}_{>0}$ 
{and} it  can be also represented by
\[ \alpha=c_1-\dfrac{1}{c_2-\dfrac{1}{\raisebox{0.5cm}{$\ddots$}-\dfrac{1}{c_{l}}}}\]
with $c_1,\ldots, c_{l}\in\mathbb{Z}_{> {1}}$.
In this case, we write $[a_1,\ldots, a_{n}]$ and $[[c_1,\ldots, c_l]]$ for these expansions, respectively. 
The former expansion is called a \textit{regular continued fraction} of $\alpha$, and the latter is called a {Hirzebruch-Jung continued fraction (or \textit{negative continued fraction} in this paper)} of $\alpha$.
One can always assume that the length $n$ of a regular continued fraction to be even, since  $[a_1,\ldots, a_{n}+1]=[a_1,\ldots, a_{n},1]$. The expression as a regular continued fraction is uniquely determined if the parity of $n$ is specified, and that as a negative continued fraction is unique (since $c_i \ge 2$ for all $i$ now).

For an integer $a$, we set:
\begin{equation}\label{mat. cont. farac.} M(a):=\begin{pmatrix}
a & 1\\
1 & 0
\end{pmatrix}, \quad M^{-}(a):=\begin{pmatrix}
a & -1\\
1 & 0
\end{pmatrix}. \end{equation}
Moreover, for a finite sequence of integers $(a_1,\ldots, a_n)$, we set
\begin{equation}\label{mat. cont. frac. seq.}
M(a_1,\ldots, a_n)=M(a_1)\cdots M(a_n),\quad M^{-}(a_1,\ldots, a_n)=M^{-}(a_1)\cdots M^{-}(a_n).
\end{equation}
It follows from the definitions, we see that $M^{-}(a_1,\ldots, a_n)\in\mathsf{SL}(2,\mathbb{Z})$, whereas $M(a_1,\ldots, a_n)\in\mathsf{SL}(2,\mathbb{Z})$ if and only if $n$ is even.
These matrices are well-known as the matrices of continued fractions in elementary number theory because one has the following result.

\begin{lem}[{\cite[Proposition 3.1]{MO19}}]\label{MO19 prop.3.1}
    Let $\alpha=\frac{r}{s}>1$ be an irreducible fraction, and assume that it is expressed by 
    \[ \alpha= [a_1,\ldots, a_{{n}}]=[[c_1,\ldots, c_l]]\]
    with $a_i\geq 1$ $(i=1,\ldots , {n} )$ and $c_j\geq 2$ $(j=1,\ldots ,l)$.
    Then, 
    \[ M(a_1,\ldots, a_{{n}})=\begin{pmatrix}
r & r'\\
s & s'
\end{pmatrix},\quad M^{-}(c_1,\ldots, c_l)=\begin{pmatrix}
r & -r''\\
s & -s''
\end{pmatrix}, \]
where $\frac{r'}{s'}=[a_1,\ldots,a_{{n-1}}]$ and $\frac{r''}{s''}=[[c_1,\ldots,c_{l-1}]]$.
\end{lem}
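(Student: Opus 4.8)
The plan is to prove both identities by a single induction on the length of the continued fraction, running the regular and negative cases in parallel. For the regular case I would first introduce the convergents $\frac{r_k}{s_k}:=[a_1,\ldots,a_k]$ through Euler's recurrences $r_k=a_kr_{k-1}+r_{k-2}$, $s_k=a_ks_{k-1}+s_{k-2}$, with initial data $\binom{r_0}{s_0}=\binom{1}{0}$ and $\binom{r_{-1}}{s_{-1}}=\binom{0}{1}$, so that $\frac{r_0}{s_0}$ is read as the empty fraction $\frac10=\infty$. The core claim is then
\[
M(a_1,\ldots,a_k)=\begin{pmatrix} r_k & r_{k-1}\\ s_k & s_{k-1}\end{pmatrix},
\]
which holds for $k=1$ by inspection of $M(a_1)$ and propagates because right multiplication by $M(a_{k+1})=\left(\begin{smallmatrix} a_{k+1} & 1\\ 1 & 0\end{smallmatrix}\right)$ sends the pair of columns $\bigl(\binom{r_k}{s_k},\binom{r_{k-1}}{s_{k-1}}\bigr)$ to $\bigl(a_{k+1}\binom{r_k}{s_k}+\binom{r_{k-1}}{s_{k-1}},\ \binom{r_k}{s_k}\bigr)$, and these are exactly $\binom{r_{k+1}}{s_{k+1}}$ and $\binom{r_k}{s_k}$ by the recurrence. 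Specializing to $k=2m$ gives the asserted shape, with the second column realizing $\frac{r'}{s'}=\frac{r_{2m-1}}{s_{2m-1}}=[a_1,\ldots,a_{2m-1}]$.

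It then remains to identify $\binom{r_{2m}}{s_{2m}}$ with the prescribed coprime pair $(r,s)$, rather than merely with something proportional to it. Taking determinants in the matrix identity gives $\det M(a_1,\ldots,a_{2m})=(\det M(a_1))\cdots(\det M(a_{2m}))=(-1)^{2m}=1$, i.e.\ $r_{2m}s_{2m-1}-r_{2m-1}s_{2m}=1$; in particular $\gcd(r_{2m},s_{2m})=1$, so the convergent $\frac{r_{2m}}{s_{2m}}=[a_1,\ldots,a_{2m}]=\alpha$ is already reduced, and since $a_i\geq1$ forces $s_k>0$ we conclude $(r_{2m},s_{2m})=(r,s)$. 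The same determinant identity read on the truncation shows $\frac{r_{2m-1}}{s_{2m-1}}$ is reduced, hence it is the irreducible form of $[a_1,\ldots,a_{2m-1}]$, delivering the promised $r',s'$.

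The negative continued fraction is handled identically, replacing $M$ by $M^{-}$ and Euler's recurrence by its signed analogue $P_k=c_kP_{k-1}-P_{k-2}$, $Q_k=c_kQ_{k-1}-Q_{k-2}$, where $\frac{P_k}{Q_k}:=[[c_1,\ldots,c_k]]$. Here one checks by the same induction that
\[
M^{-}(c_1,\ldots,c_k)=\begin{pmatrix} P_k & -P_{k-1}\\ Q_k & -Q_{k-1}\end{pmatrix},
\]
the sign pattern being precisely what is produced when the columns are pushed forward by $M^{-}(c_{k+1})=\left(\begin{smallmatrix} c_{k+1} & -1\\ 1 & 0\end{smallmatrix}\right)$. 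Since $\det M^{-}(c)=1$ for every $c$, the total determinant equals $1$ for all lengths $l$ (not only even ones), which is the structural reason the negative expansion always lands in $\mathsf{SL}(2,\ZZ)$; the coprimality argument then identifies $(P_l,Q_l)=(r,s)$ and $(P_{l-1},Q_{l-1})=(r'',s'')$.

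I expect no genuine obstacle here, as this is the classical continuant computation; the only places demanding care are bookkeeping ones — fixing the convention $\frac10$ for the empty base fraction, keeping the ``previous'' convergent in the second column throughout the induction, and remembering that it is the determinant step that upgrades ``the first column is proportional to $(r,s)$'' to ``the first column equals $(r,s)$''. As an independent sanity check one can read the identities through the M\"obius action on $\PP^1$: since $M(a)\colon z\mapsto a+\frac1z$ and $M^{-}(c)\colon z\mapsto c-\frac1z$, evaluating the product at $z=\infty$ and at $z=0$ recovers $[a_1,\ldots,a_n]$ and $[a_1,\ldots,a_{n-1}]$ (resp.\ their negative analogues) as the images encoded by the two columns, confirming the fractions prior to any integrality bookkeeping.
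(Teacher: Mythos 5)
Your proof is correct. Note that the paper itself gives no proof of this lemma --- it is quoted verbatim from \cite[Proposition 3.1]{MO19} --- so there is no internal argument to compare against; your continuant induction (columns of $M(a_1,\ldots,a_k)$ carry consecutive convergents, the determinant $\pm1$ upgrades proportionality to equality, and positivity of the denominators fixes the sign) is the standard derivation and exactly the one underlying the cited source. The only bookkeeping point you gloss over is the positivity $Q_k>0$ in the negative-continued-fraction case, which follows from $c_j\ge 2$ via $Q_{k+1}-Q_k\ge Q_k-Q_{k-1}\ge 1$; you did state the analogous point for $s_k$ in the regular case, so this is a trivial omission rather than a gap.
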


The $q$-deformation of positive rational numbers is based on the above lemma.
Let $q$ be a formal symbol.
For an integer $a$, we define a Laurent polynomial $[a]_q\in \mathbb{Z}[q, q^{-1}]$ by
\[ [a]_q:= \dfrac{1-q^a}{1-q}=\left\{\begin{array}{ll}
     q^{a-1}+q^{a-2}+\cdots +q+1& \text{if $a>0$},  \\[5pt]
     0 & \text{if $a=0$}, \\[5pt]
    -q^{-a}-q^{-a+1}-\cdots -q^{-2}-q^{-1}& \text{if $a<0$}.
\end{array}\right.\]
By the definition of $[a]_q$, for all $a,n\in\mathbb{Z}$, the equation
\begin{equation}\label{[a+n]_q}
[a+n]_q=q^n[a]_q+[n]_q 
\end{equation}
holds.
For an integer $a$, two $q$-deformations of (\ref{mat. cont. farac.}) are defined by
\begin{equation}\label{q-deform. mat. cont. farac.} 
M_q(a):=\begin{pmatrix}
[a]_q & q^a\\
1 & 0
\end{pmatrix}, \quad 
M^{-}_q(a):=\begin{pmatrix}
[a]_q & -q^{a-1}\\
1 & 0
\end{pmatrix}.
\end{equation}
{
The next lemma is a $q$-deformation of Lemma \ref{MO19 prop.3.1}. 
Here, for regular continued fractions, we only use those of even length.  
The $q$-deformations of \eqref{mat. cont. frac. seq.} are defined as follows. 
\begin{eqnarray*}\label{q-deform. mat. cont. frac. seq.}
M_q(a_1,\ldots ,a_{2m})  &:=& 
        M_q(a_1)M_{q^{-1}}(a_2)M_q(a_3)\cdots M_{q^{-1}}(a_{2m})\\
\widetilde{M}_q(a_1,\ldots, a_{2m}) &:=& q^{a_2+a_4+\cdots +a_{2m}}M_q(a_1,\ldots, a_{2m})\\
M^{-}_q(a_1,\ldots, a_n) &:=& M^{-}_q(a_1)M^{-}_q(a_2)\cdots M^{-}_q(a_n).
\end{eqnarray*} 
Then, the following statements hold.  
}

\begin{prop}[{\cite[Propositions 4.3 and 4.9]{MO}}]\label{MO20 prop. 4.3 and 4.9}
Let $\alpha=\frac{r}{s}$ be a rational number as given in Lemma \ref{MO19 prop.3.1}. {The polynomials $\cR_{\alpha}(q), \cS_{\alpha}(q) \in \ZZ[q]$ given by  
\[ 
M_q^{-}(c_1,\ldots , c_l)\begin{pmatrix}
 1 \\ 0 
\end{pmatrix}=\begin{pmatrix}
 \cR_{\alpha}(q) \\ \cS_{\alpha}(q)
\end{pmatrix}
\]
also satisfy 
\begin{equation}\label{q-deformation via regular cf}
\widetilde{M}_q(a_1,\ldots , a_{2m})\begin{pmatrix}
 1 \\ 0 
\end{pmatrix}=\begin{pmatrix}
 q\cR_{\alpha}(q) \\ q\cS_{\alpha}(q)
\end{pmatrix}. 
\end{equation}
Moreover, the following statements hold.}
\begin{enumerate}
    \item[(1)] $\cR_{\alpha}(q)$ and $\cS_{\alpha}(q)$ are coprime in $\mathbb{Z}[q]$. 
    \item[(2)] We have $\cR_{\frac{r}s}(1)=r$ and $\cS_{\frac{r}s}(1)=s$. 
\end{enumerate}
\end{prop}

Based on Proposition \ref{MO20 prop. 4.3 and 4.9}, the \textit{$q$-deformation of a rational number $\alpha>1$} is defined by
\[ [\alpha]_q: =\dfrac{\cR_\alpha(q)}{\cS_\alpha(q)}.\]

{
\begin{rem}
Let $\mathsf{PSL}_{q}(2,\mathbb{Z})$ be the subgroup of 
$$ \mathsf{PGL}\left(2,\mathbb{Z}\left[q^{\pm 1}\right]\right)=\mathsf{GL}\left(2,\mathbb{Z}\left[q^{\pm 1}\right]\right)/\left\{\pm q^NE_2\mid N\in \mathbb{Z}\right\}$$ generated by the following two matrices 
\[\displaystyle
R_q:=\begin{pmatrix}
q & 1 \\
0 & 1 \\
\end{pmatrix}, \ \ \ \ 
L_q = \begin{pmatrix}
1 & 0 \\
1 & q^{-1} \\
\end{pmatrix}. 
\] 
 \cite[Proposition 1.1]{LSM} states that  $\mathsf{PSL}(2,\mathbb{Z}) \cong \mathsf{PSL}_{q}(2,\mathbb{Z})$. Via the equation 
 $$M_q(a_1,\ldots, a_{2m})=R_q^{a_1}L_q^{a_2}R_q^{a_3}L_q^{a_4}\cdots R_q^{a_{2m-1}} L_q^{a_{2m}}$$
and the classical $\mathsf{PSL}(2,\mathbb{Z})$ 
 action on $\QQ \cup \left\{\left(\frac{1}{0}\right)\right\}$, 
\cite{MOV} gives an insightful interpretation of $q$-deformed rationals. 
We can also use negative continued fractions for this interpretation. 
\end{rem}
}

For an integer $n \ge 2$, since $n=[[n]]$ as a negative continued fraction, we have the following 
philosophically trivial equations 
\begin{equation}\label{[n]_q}
\cR_{n}(q)=[n]_q \quad \text{and} \quad \cS_{n}(q)=1.
\end{equation}

Morier-Genoud and Ovsienko pointed out that the definition of $q$-deformed rational number $[\alpha ]_q$ can be extended to the case where $\alpha \leq 1$ including the negative rational numbers by the following formulas, see \cite[page 3]{MO}:
\begin{equation}
[\alpha +1]_q=q[\alpha ]_q+1. \label{eq1-16}
\end{equation}
However, for $\alpha<0$, $\cR_{\alpha}(q)$ is not an ordinary polynomial but a Laurent polynomial. Similarly, for $0 < \alpha<1$, $\cR_{\alpha}(q)$ is a polynomial, but $\cR_{\alpha}(0)=0$ (if $\alpha \ge 1$, we have $\cR_{\alpha}(0)=1$).   
{ It can be easily verified that \eqref{q-deformation via regular cf} holds for all $\alpha \in \QQ$, that is, without assuming that $\alpha >1$. }

{
\begin{lem}\label{wakui-1.5}
For a rational number $\alpha $ and an integer $n$, we have
$$
\mathcal{R}_{\alpha +n}(q)=q^n\mathcal{R}_{\alpha }(q) +[n]_q\mathcal{S}_{\alpha }(q) \quad \text{and} \quad \mathcal{S}_{\alpha +n}(q)=\mathcal{S}_{\alpha }(q),
$$
equivalently, $[\alpha +n]_q=q^n[\alpha ]_q+[n]_q.$
{In particular, we have
\begin{equation}\label{right-sift}
\cS_{\alpha}(q)=\cS_{\alpha+1}(q) \quad \text{and} \quad 
\cR_{\alpha}(q)=q^{-1}(\cR_{\alpha+1}(q) -\cS_{\alpha+1}(q)).
\end{equation}
}
\end{lem} 
\begin{proof} 
It suffices to show that $[\alpha +n]_q=q^n[\alpha ]_q+[n]_q$. For $n \geq 1$, this is easily shown by induction on $n$ using \eqref{eq1-16}. 
For $n \ge 1$, replacing $\alpha$ by $\alpha -n$, we have $[\alpha]_q=q^n[\alpha-n]_q+[n]_q$. Hence 
$$[\alpha-n]_q= q^{-n}[\alpha]_q-q^{-n}[n]_q= q^{-n}[\alpha]_q+[-n]_q.$$
\end{proof} 
}

\begin{lem}\label{wakui-1.4}
Let $a, x$ be positive and coprime integers with $1\leq a\leq x$, and express $x$ as the form 
$x=ca+r$ for some  $c, r\in\mathbb{Z}$ with $0\leq r<a$. 
Then the following equations hold: 
\begin{align*}
\mathcal{R}_{\frac{x}{a}}(q)&=[c+1]_q\mathcal{R}_{\frac{a}{a-r}}(q)-q^{c}\mathcal{S}_{\frac{a}{a-r}}(q), \\[0.1cm]  
\mathcal{S}_{\frac{x}{a}}(q)&=\mathcal{R}_{\frac{a}{a-r}}(q).   
\end{align*}
\end{lem}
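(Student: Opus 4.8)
The plan is to interpret the Euclidean division $x=ca+r$ as the first step of the negative continued fraction expansion of $\frac{x}{a}$ and to encode that step by a single factor $M^{-}_q(c+1)$, after which both identities drop out of one matrix--vector product. This keeps the reasoning purely structural and lets Proposition~\ref{MO20 prop. 4.3 and 4.9} do the analytic work.

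First I record the elementary identity underlying everything. Since $\gcd(a,x)=1$ and $a\le x$, the case $r=0$ forces $a\mid x$, hence $a=1$; then $\frac{x}{a}=x$ and $\frac{a}{a-r}=\frac11$, and using $\cR_{\frac11}(q)=\cS_{\frac11}(q)=1$ together with \eqref{[n]_q} both claims reduce to $[x]_q=[x+1]_q-q^{x}$, which is \eqref{[a+n]_q} with $a=1,\,n=x$. So I may assume $a\ge2$, whence $x>a$ and $1\le r\le a-1$. Then $\gcd(a,a-r)=\gcd(a,r)=\gcd(a,x)=1$, so $\beta:=\frac{a}{a-r}$ is irreducible, and from $x=ca+r$ I compute
\[
\frac{x}{a}=c+\frac{r}{a}=(c+1)-\frac{a-r}{a}=(c+1)-\frac{1}{\beta}.
\]
Because $0<\frac{r}{a}<1$ we have $\big\lceil \tfrac{x}{a}\big\rceil=c+1\ge2$ and $\beta>1$, so this display is precisely the initial step of the negative continued fraction algorithm: if $\beta=[[c_2,\dots,c_l]]$ is the negative continued fraction of $\beta$ (all $c_j\ge2$), then $\frac{x}{a}=[[c+1,c_2,\dots,c_l]]$.

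Second, I feed this into Proposition~\ref{MO20 prop. 4.3 and 4.9}. Using the multiplicativity $M^{-}_q(c+1,c_2,\dots,c_l)=M^{-}_q(c+1)\,M^{-}_q(c_2,\dots,c_l)$ built into the definition of $M^{-}_q$, associativity of matrix multiplication, and the fact that $M^{-}_q(c_2,\dots,c_l)\binom10=\binom{\cR_\beta(q)}{\cS_\beta(q)}$, I obtain
\begin{align*}
\begin{pmatrix}\cR_{\frac{x}{a}}(q)\\[2pt]\cS_{\frac{x}{a}}(q)\end{pmatrix}
&=M^{-}_q(c+1,c_2,\dots,c_l)\begin{pmatrix}1\\0\end{pmatrix}
=M^{-}_q(c+1)\begin{pmatrix}\cR_\beta(q)\\ \cS_\beta(q)\end{pmatrix}\\
&=\begin{pmatrix}[c+1]_q & -q^{c}\\ 1 & 0\end{pmatrix}\begin{pmatrix}\cR_\beta(q)\\ \cS_\beta(q)\end{pmatrix},
\end{align*}
where in the last matrix I used $-q^{(c+1)-1}=-q^{c}$ from the definition \eqref{q-deform. mat. cont. farac.} of $M^{-}_q$. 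Substituting $\beta=\frac{a}{a-r}$ and reading off the two coordinates gives exactly the asserted formulas for $\cR_{\frac{x}{a}}(q)$ and $\cS_{\frac{x}{a}}(q)$.

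The computation is essentially forced once the identity $\frac{x}{a}=(c+1)-\frac1\beta$ is found, so the only genuinely delicate point is the continued fraction bookkeeping: one must split off the \emph{ceiling} $c+1$ rather than the floor $c$, and obtain the remainder $\frac{a}{a-r}$ rather than $\frac{a}{r}$. This is exactly where the sign conventions separating negative continued fractions from regular ones enter, and getting it right (together with matching the exponent $c$ in the entry $-q^{c}$) is what needs the most attention; everything else is associativity of the $2\times2$ matrices.
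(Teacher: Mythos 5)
Your proof is correct and follows essentially the same route as the paper's: treat $a=1$ (i.e.\ $r=0$) separately, observe $\frac{x}{a}=(c+1)-\frac{1}{a/(a-r)}$ so that prepending $c+1$ to the negative continued fraction of $\frac{a}{a-r}$ gives that of $\frac{x}{a}$, and read off both identities from the product with $M^{-}_q(c+1)$. Your added checks that $c+1\ge 2$ and that $\frac{a}{a-r}$ is irreducible are fine but not substantively different from the paper's argument.
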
 
\begin{proof}
Note that it follows from the equations \eqref{[n]_q} and \eqref{right-sift} that $\cR_1(q)=\cS_1(q)=1$.
If $a=1$, then $r=0$.
Thus, we have
\[ 
[c+1]_q\mathcal{R}_{\frac{a}{a-r}}(q)-q^{c}\mathcal{S}_{\frac{a}{a-r}}(q)=[x+1]_q\cR_1(q)-q^x\cS_1(q) = [x]_q =\mathcal{R}_{\frac{x}{a}}(q).
\]
The second equation obviously holds when $a=1$. 

If $a>1$, then $r>0$, and thus $\frac{a}{a-r}>1$.
By $x=ca+r$, 
\[ 
\dfrac{x}{a}=\dfrac{(c+1)a+r-a}{a}=c+1-\dfrac{1}{\frac{a}{a-r}}.
\]
So,  if $\frac{a}{a-r}$ is expressed as  
$\frac{a}{a-r}=[[c_1, \ldots , c_l]]$, then 
$\frac{x}{a}=[[c+1, c_1, \ldots ,c_l]]$ and 
\begin{align*}
M_q^-(c+1, c_1, \ldots , c_l)
\begin{pmatrix} 
1\\ 0 \end{pmatrix}
&=M_q^-(c+1)M_q^-(c_1, \ldots , c_l)\begin{pmatrix} 
1\\ 0 \end{pmatrix} \\[0.1cm]   
&=\begin{pmatrix} 
[c+1]_q \mathcal{R}_{\frac{a}{a-r}}(q)-q^{c}\mathcal{S}_{\frac{a}{a-r}}(q) \\[0.1cm]  
\mathcal{R}_{\frac{a}{a-r}}(q)
\end{pmatrix} . 
\end{align*}
This leads to the equations in the lemma. 
\end{proof}

{By Lemmas \ref{wakui-1.5} and} \ref{wakui-1.4}, we have 
$$\{ \, \cS_\alpha(q) \mid  \alpha \in \QQ \, \} 
{=\{ \, \cS_\alpha(q) \mid  \alpha \in \QQ  \cap (1, 2] \, \} }
= \{ \, \cR_\alpha (q) \mid  \alpha \in \QQ \cap (1, \infty) \, \}. $$

\begin{lem}\label{wakui-1.6}
For coprime positive integers $a, x$ with $1\leq a\leq x$, we have
\begin{align}
\mathcal{R}_{\frac{a}{x}}(q) &=\mathcal{R}_{\frac{x}{x-a}}(q) -\mathcal{S}_{\frac{x}{x-a}}(q), \label{wakui-1.6-1}\\
\mathcal{S}_{\frac{a}{x}}(q) &=\mathcal{R}_{\frac{x}{x-a}}(q).   \label{wakui-1.6-2}
\end{align}
\end{lem}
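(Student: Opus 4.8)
The plan is to reduce the fraction $\tfrac{a}{x}\le 1$ appearing on the left-hand sides to a fraction that is $>1$, so that Lemma~\ref{wakui-1.4} applies directly. Note first that we may assume $a<x$: since $\gcd(a,x)=1$ and $1\le a\le x$, the case $a=x$ forces $a=x=1$, for which $\frac{x}{x-a}$ is not defined, so this degenerate case is excluded.

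First I would pass from $\frac{a}{x}$ to $\frac{a+x}{x}$. Because $0<a<x$, this fraction is irreducible (as $\gcd(a+x,x)=\gcd(a,x)=1$) and satisfies $1<\frac{a+x}{x}<2$. Applying the shift formulas \eqref{right-sift} to $\alpha=\frac{a}{x}$ gives
$$\mathcal{S}_{\frac{a}{x}}(q)=\mathcal{S}_{\frac{a+x}{x}}(q), \qquad \mathcal{R}_{\frac{a}{x}}(q)=q^{-1}\bigl(\mathcal{R}_{\frac{a+x}{x}}(q)-\mathcal{S}_{\frac{a+x}{x}}(q)\bigr).$$

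Next I would apply Lemma~\ref{wakui-1.4} to $\frac{a+x}{x}$, that is, with the roles of $x$ and $a$ in that lemma played by $a+x$ and $x$ respectively. Writing $a+x=c\cdot x+r$ with $0\le r<x$ and using $0<a<x$, one reads off $c=1$ and $r=a$, so that the ``$\frac{a}{a-r}$'' of that lemma becomes $\frac{x}{x-a}$. Hence Lemma~\ref{wakui-1.4} yields
$$\mathcal{S}_{\frac{a+x}{x}}(q)=\mathcal{R}_{\frac{x}{x-a}}(q), \qquad \mathcal{R}_{\frac{a+x}{x}}(q)=[2]_q\,\mathcal{R}_{\frac{x}{x-a}}(q)-q\,\mathcal{S}_{\frac{x}{x-a}}(q).$$
The first equality, combined with the previous paragraph, immediately gives \eqref{wakui-1.6-2}. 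Substituting both equalities into the expression for $\mathcal{R}_{\frac{a}{x}}(q)$ and simplifying with $[2]_q=q+1$ (so that $[2]_q-1=q$) cancels the $q^{-1}$ prefactor and produces \eqref{wakui-1.6-1}.

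I do not anticipate a serious obstacle; the argument is a short bookkeeping computation built on the two preceding lemmas. The only points requiring care are the degenerate case $a=x$ (disposed of by the coprimality remark above), the verification that the quotient and remainder in Lemma~\ref{wakui-1.4} are exactly $c=1$ and $r=a$ (this is precisely where $a<x$ enters), and the check that the factor $q^{-1}$ coming from \eqref{right-sift} genuinely cancels, so that $\mathcal{R}_{\frac{a}{x}}(q)$ remains an honest polynomial; the latter is guaranteed by the identity $[2]_q-1=q$.
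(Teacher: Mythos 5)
Your proof is correct and is essentially the paper's own argument in different packaging: the paper also shifts $\frac{a}{x}$ to $\frac{a}{x}+1$ and peels off the leading term of $[[2,c_1,\ldots,c_l]]$ by multiplying by $M_q^-(2)$, which is exactly the content of your invocation of Lemma~\ref{wakui-1.4} with $c=1$ and $r=a$. The bookkeeping (the cancellation of $q^{-1}$ via $[2]_q-1=q$) matches the paper's substitution of \eqref{wakui-1.6-2} into \eqref{wakui-1.6-3}, and your explicit disposal of the degenerate case $a=x=1$ is a small point the paper leaves implicit.
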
 
\begin{proof}
Express $\frac{x}{x-a}$ as the negative continued fraction $\frac{x}{x-a}=[[c_1, \ldots , c_l]]$. 
Then $\frac{a}{x}+1=[[2, c_1, \ldots , c_l]]$, and  
$$\begin{pmatrix}
\mathcal{R}_{\frac{a}{x}+1}(q) \\ 
\mathcal{S}_{\frac{a}{x}+1}(q)
\end{pmatrix} 
=M_q^-(2)M_q^-(c_1, \ldots , c_l)\begin{pmatrix} 
1 \\ 0 \end{pmatrix}
=\begin{pmatrix} 
[2]_q & -q \\ 
1 & 0 
\end{pmatrix}\begin{pmatrix}
\mathcal{R}_{\frac{x}{x-a}}(q) \\  
\mathcal{S}_{\frac{x}{x-a}}(q)
\end{pmatrix} .$$
This equation and Lemma \ref{wakui-1.5} yield the equation \eqref{wakui-1.6-2} and
\begin{equation}\label{wakui-1.6-3}
q\mathcal{R}_{\frac{a}{x}}(q) +\mathcal{S}_{\frac{a}{x}}(q)
=[2]_q\mathcal{R}_{\frac{x}{x-a}}(q)-q\mathcal{S}_{\frac{x}{x-a}}(q) 
\end{equation}
The equation \eqref{wakui-1.6-1} can be obtained by substituting \eqref{wakui-1.6-2} to \eqref{wakui-1.6-3}. 
\end{proof}

\subsection{The arithmetic conjecture on \texorpdfstring{$q$}{q}-deformed rational numbers}\label{subsec-2-2}
Conjecture \ref{1-7} is the central problem of the present paper.
In this subsection, we collect a few remarks on this conjecture. 

{
If Conjecture~\ref{1-7} holds for an odd prime $p$, then we have   
\begin{equation}\label{number of S}
\#\{ \mathcal{S}_{\frac{a}{p}}(q) \mid a \in \ZZ \} = \begin{cases}
\dfrac{p+1}{2} & (p\equiv 1\pmod4), \\[5mm]
\dfrac{p-1}{2} & (p\equiv 3\pmod4).
\end{cases}
\end{equation}
To see this, recall the result of elementary number theory that 
there is some $a \in \ZZ$ with $a^2 \equiv -1 \pmod{p}$ if and only if $p \equiv 1 \pmod4$. 
Thus, if $p\equiv 1\pmod 4$, then 
$$\{1, \ldots, p-1\}=\{\, a_1, \ldots, a_{\frac{p-3}2}, b_1, \ldots, b_{\frac{p-3}2}, c, d \, \},$$
where $a_ib_i \equiv -1 \pmod{p}$ for each $i$ and $c^2 \equiv d^2 \equiv -1 \pmod{p}$. If $p\equiv 3\pmod4$, $$\{1, \ldots, p-1\}=\{\, a_1, \ldots, a_{\frac{p-1}2}, b_1, \ldots, b_{\frac{p-1}2} \, \}$$
holds, where $a_ib_i \equiv -1 \pmod{p}$ for each $i$.  In the present assumption, 
we have $\mathcal{S}_{\frac{a_i}p}(q)=\mathcal{S}_{\frac{b_i}p}(q)$ for each $i$, and this is the only case when $\mathcal{S}_{\frac{a}p}(q)=\mathcal{S}_{\frac{b}p}(q)$ holds for distinct $a,b \in \{1, \ldots ,  p-1\}$. Hence Conjecture~\ref{1-7} implies \eqref{number of S}. 
However, in Theorem~\ref{sufficiency} below, we will prove the sufficiency of the conjecture 
(without assuming that $p$ is prime). 
So $\mathcal{S}_{\frac{a_i}p}(q)=\mathcal{S}_{\frac{b_i}p}(q)$ actually holds, and \eqref{number of S} is equivalent to Conjecture~\ref{1-7}. } 
\smallskip

Next, we remark that the assumption that $p$ is prime is really necessary for the necessity part of Conjecture~\ref{1-7}. 
In fact, $\frac{5}{24}=[0,4,1,4]$ and $\frac{11}{24}=[0,2,5,2]$ satisfy 
$$\cS_{\frac{5}{24}}(q)=\cS_{\frac{11}{24}}(q)=q^8+2q^7+3q^6+4q^5+4q^4+4q^3+3q^2+2q+1$$ 
by Proposition~\ref{MO20 prop. 4.3 and 4.9} and \eqref{right-sift}, while $5\cdot 11 +1=56$ is not divisible by 24.

The following table shows composite numbers $p$ and pairs of natural numbers $(a, b)$ $(1<a<b<p\leq 111)$ which do not satisfy the necessity of Conjecture~\ref{1-7}. Note that if $p$ admits a pair $(a,b)$ with this property then it admits other pairs. For example, $(p-b, p-a)$ is also such a pair by Lemma~\ref{-r} below.

\begin{center}
\begin{tabular}{cc|cc} \hline
   $p$ & $(a,b)$ & $p$ & $(a,b)$\\ \hline
   24 & (5,11) & 84 & (19,25)  \\ 
    51 & (11,20) & 91 & (19,32)  \\ 
    57 & (13,16) & 99& (17,28)  \\ 
    60 & (11,19) & 105& (23,38)  \\ 
    63 & (13,20) & 110& (19,41)  \\ 
    78 & (17,29) & 111 & (25,34)  \\  \hline
\end{tabular}
\end{center}

On the other hand, the sufficiency part of Conjecture~\ref{1-7} holds without the assumption that $p$ is prime. 
In Sections \ref{sec-3} and \ref{sec-4}, we will prove this in two ways.

\subsection{Closures of a quiver and  \texorpdfstring{$q$}{q}-deformed rational numbers}  \label{subsec2-3}

By a quiver we mean a tuple $Q=(Q_0,Q_1,s,t)$ consisting of two sets $Q_0$, $Q_1$ and two maps $s,t:Q_1\to Q_0$. Each element of $Q_0$ (resp. $Q_1$) is called a vertex (resp. an arrow). For an arrow $\alpha\in Q_1$, we call $s(\alpha)$ (resp. $t(\alpha)$) the source (resp. the target) of $\alpha$. We will commonly write $a\xrightarrow{\alpha}b$ or $\alpha:a\to b$ to indicate that an arrow $\alpha$ has the source $a$ and the target $b$.
A quiver $Q$ is \textit{finite} if two sets $Q_0$ and $Q_1$ are finite sets.
The \textit{opposite quiver} of $Q$, say $Q^\vee$, is defined by $Q^\vee=(Q_0,Q_1,t,s)$.

Let $Q$ be a finite quiver.
A subset $C\subset Q_0$ is a \textit{closure} if there is no arrow $\alpha\in Q_1$ such that $s(\alpha)\in Q_0\setminus C$ and $t(\alpha)\in C$.
A closure $C$ is an \textit{$\ell$-closure} if the number of elements of $C$ is $\ell$.
The number of $\ell$-closures is denoted by $\rho_\ell(Q)$.
Then the polynomial 
\[ \mathsf{cl}(Q):=\sum_{\ell=0}^{n}\rho_\ell(Q)q^{\ell}\in\mathbb{Z}[q],\]
where $n=|Q_0|$, is called the \textit{closure polynomial} of $Q$.

Obviously, the constant term and the coefficient of the leading term of $\cl(Q)$ are $1$, including the extremal case $\mathsf{cl}(\varnothing)=1$.
We remark that, for any $\ell$, the equation
\begin{equation}\label{opposite closure poly}
\rho_\ell(Q)=\rho_{n-\ell}(Q^\vee)
\end{equation}
holds.
For a polynomial $f(q)\in\mathbb{Z}[q]$, we define a polynomial $f^{\vee}(q)$ by 
\[ f^{\vee}(q)=q^{\mathsf{deg}(f)}f(q^{-1}),\]
which is called the \textit{reciprocal polynomial} of $f(q)$.
By \eqref{opposite closure poly}, we have 
\begin{equation} \label{opposite closure poly 2}
\cl(Q)^\vee=\cl(Q^\vee).    
\end{equation}

For a tuple of integers $\mathbf{a}:=(a_1,a_2,\ldots, a_s)$ with $a_1, a_s\geq 0$, $a_2,\ldots,a_{s-1} >0$, 
we set the quiver
\[ Q(\mathbf{a}):= \underbrace{\circ \oot \circ \cdots \circ \oot \circ}_{a_1 \, \text{left arrows}}\underbrace{ \too \circ \cdots \circ \too \circ}_{a_2  \, \text{right arrows}}\underbrace{\oot \circ \cdots \circ \oot \circ}_{a_3  \, \text{left  arrows}} \too \cdots,\]
with the left-right distinction.
We understand that if $a_1=0$, then 
\[ Q(\mathbf{a}):= \underbrace{ \too \circ \cdots \circ \too \circ}_{a_2  \, \text{right arrows}}\underbrace{\oot \circ \cdots \circ \oot \circ}_{a_3  \, \text{left arrows}} \too \cdots.\]
Note that $|Q(\mathbf{a})_0|=a_1+a_2+\cdots +a_s+1$, and, for $\mathbf{a}=(a_1,a_2,\ldots, a_s)$, the equation
\begin{equation}\label{(0,a)=a^v}
\cl(Q(0,\mathbf{a}))=\cl(Q(\mathbf{a}))^\vee
\end{equation}
holds since $Q(0,\mathbf{a})\simeq Q(\mathbf{a})^{\vee}$ as quivers. 
Here we have $\cl(Q(0,0))=\cl(Q(0))=1+q$.

{\begin{rem}
We note that the closure polynomial $\cl(Q(\mathbf{a}))$ of a quiver $Q(\mathbf{a})$ can be realized with the \textit{rank polynomials of a finite fence poset}, which is more common in combinatorics (see \cite{TBEC21} and \cite{OR} for detail). 
\end{rem}}

\begin{lem}\label{lem:closure of palindromic}
For $\mathbf{a}=(a_1,a_2,\ldots, a_{s})$, we put $\mathbf{a}^\pal:=(a_s,a_{s-1},\ldots, a_{1})$. Then, there is an isomorphism of quivers
\[
Q(\mathbf{a}^\pal)\simeq\left\{\begin{array}{ll}
     Q(\mathbf{a})& \text{if $s$ is even,} \\ [8pt]
     Q(\mathbf{a})^\vee& \text{ if $s$ is odd.} 
\end{array}\right.
\]
Therefore, we have 
\[
\cl(Q(\mathbf{a}^\pal))=\left\{\begin{array}{ll}
     \cl(Q(\mathbf{a})) & \text{if $s$ is even,} \\ [8pt]
     \cl(Q(\mathbf{a}))^\vee& \text{ if $s$ is odd.} 
\end{array}\right.
\]
\end{lem}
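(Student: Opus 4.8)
The plan is to prove the isomorphism directly, by exhibiting the map that reverses the linear order of the vertices, and then to deduce the closure-polynomial statement from the already-established identity $\cl(Q^\vee)=\cl(Q)^\vee$ in \eqref{opposite closure poly 2}.

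First I would fix notation for the underlying path. Write $N:=a_1+a_2+\cdots+a_s$ and label the vertices of $Q(\mathbf{a})$ by $0,1,\ldots,N$ from left to right, so that the $j$-th edge $e_j$ joins $j-1$ and $j$ for $1\le j\le N$. Setting $A_i:=a_1+\cdots+a_i$ with $A_0=0$, the edge $e_j$ lies in the $i$-th block precisely when $A_{i-1}<j\le A_i$, and by the defining convention $e_j$ is oriented $\oot$ (from the larger to the smaller endpoint) when $i$ is odd and $\too$ (from the smaller to the larger) when $i$ is even. The hypothesis $a_2,\ldots,a_{s-1}>0$ guarantees that consecutive nonempty blocks genuinely alternate in direction, while the possible vanishing of $a_1$ or $a_s$ only deletes an end block and is harmless; note also that the interior entries of $\mathbf{a}^\pal$ are exactly those of $\mathbf{a}$, so $\mathbf{a}^\pal$ is again an admissible tuple.

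Next I would analyze the reversal bijection $\phi\colon j\mapsto N-j$ on vertices. It is an isomorphism of the underlying path, sending $e_j$ to the edge joining $N-j$ and $N-j+1$; in the reversed left-to-right labelling this is the $(N+1-j)$-th edge, which lies in the $(s+1-i)$-th block whenever $e_j$ lies in the $i$-th block. Thus $\phi$ realizes the reversal of the block sizes, matching the block structure of $Q(\mathbf{a}^\pal)$. The point to track is the orientation: since $\phi$ reverses the left-right order, the image of an edge oriented $\oot$ becomes $\too$ and vice versa, so the image of a block-$i$ edge points left exactly when $i$ is even. On the other hand, in $Q(\mathbf{a}^\pal)$ the block $s+1-i$ points left exactly when $s+1-i$ is odd, i.e. when $i\equiv s\pmod 2$. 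Comparing these two parity conditions is the crux, and I expect this bookkeeping — lining up the orientation convention of the target quiver against the effect of the reversal — to be the only delicate point.

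When $s$ is even, ``$i$ even'' and ``$i\equiv s\pmod 2$'' coincide, so every orientation produced by $\phi$ agrees with that of $Q(\mathbf{a}^\pal)$ and $\phi$ is an isomorphism $Q(\mathbf{a})\xrightarrow{\sim}Q(\mathbf{a}^\pal)$. When $s$ is odd the two conditions are complementary, so $\phi$ reverses every orientation, giving $Q(\mathbf{a})\xrightarrow{\sim}Q(\mathbf{a}^\pal)^\vee$; passing to opposite quivers yields $Q(\mathbf{a}^\pal)\simeq Q(\mathbf{a})^\vee$. This is exactly the claimed dichotomy. Finally, the closure-polynomial statement is immediate: a quiver isomorphism induces a bijection between $\ell$-closures for every $\ell$ and hence preserves $\cl$, and combining this with $\cl(Q^\vee)=\cl(Q)^\vee$ from \eqref{opposite closure poly 2} turns the two isomorphisms into the two displayed equalities.
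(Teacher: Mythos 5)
Your proof is correct and follows essentially the same route as the paper: the paper's argument is exactly the vertex-reversal (``$\pi$-rotation'') isomorphism, stated tersely for $s$ even with the odd case left as an exercise, while you carry out the full parity bookkeeping for both cases and the degenerate end blocks. The deduction of the closure-polynomial identities from \eqref{opposite closure poly 2} is likewise the intended one.
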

\begin{proof}
First, we assume that $s$ is even. 
Then, the direction of the $i$-th arrow of $Q(\mathbf{a})$ from the left is the opposite of that of the $i$-th arrow of $Q(\mathbf{a}^\pal)$ from the right end.
Thus, $Q(\mathbf{a}^\pal)$ is the \lq\lq$\pi$-rotation\rq\rq  of $Q(\mathbf{a})$, and hence $Q(\mathbf{a})\simeq Q(\mathbf{a}^\pal)$ as quivers.
We leave the case $n$ is odd to the reader as an easy exercise.
\end{proof}

According to \cite[Section 3]{MO}, Morier-Genoud and Ovsienko gave a combinatorial interpretation of the coefficients in $\cR_{\alpha }(q)$ and $\cS_{\alpha }(q)$. 

Let $\alpha >1$ be a rational number, and write $\alpha$ as the regular continued fraction $\alpha=[a_1, a_2, \ldots, a_{2m}]$.
Then, we set 
\begin{align*}\label{Q_a^R}
& Q_{\alpha}^{\cR} :=Q(a_1-1, a_2, \ldots , a_{2m-1}, a_{2m}-1), \\
& Q_{\alpha}^{\cS} :=\left\{\begin{array}{ll}
Q(0,a_2-1, a_3, \ldots , a_{2m-1}, a_{2m}-1)  & \text{if $m>1$,} \\ [5pt]
Q(0,a_2-2)  & \text{if $m=1$.}
\end{array}\right.
\end{align*}
Here, if $a_2=1$ and $m>1$ (resp. $a_2=2$ and $m=1$, $a_2=1$ and $m=1$), we understand that $Q_\alpha^{\mathcal{S}}=Q(a_3, \ldots , a_{2m-1}, a_{2m}-1)$ (resp. $Q_\alpha^{\mathcal{S}}=Q(0)$, $Q_\alpha^{\mathcal{S}}=\varnothing$).
The quiver $Q_{\alpha}^{\cS}$ is obtained by deleting the first $a_1$ arrows from $Q_\alpha^{\cR}$.

\begin{rem}\label{odd length}
 If $\alpha \not \in \ZZ$ and $\alpha >1$, the above construction of $Q_\alpha^\cR$ and $Q_\alpha^\cS$  also works for the expression as a regular continued fraction of {\it odd} length.
\end{rem}

Following the notation used in \cite{MO}, we will use the symbols $\rho_\ell(\alpha)$ and $\sigma_\ell(\alpha)$ to denote the numbers of $\ell$-closures of  $Q_\alpha^{\cR}$ and $Q_\alpha^{\cS}$, respectively.

\begin{thm}[{\cite[Theorem 4]{MO}}]\label{theorem:MO closure} 
Let $\alpha >1$ be an irreducible fraction.
Then, the following equations hold:
\begin{align}
\mathcal{R}_{\alpha}(q) &=\sum_{\ell\geq 0}\rho_\ell(\alpha) q^\ell\left(=\cl(Q_{\alpha}^{\cR})\right), \\
\mathcal{S}_{\alpha}(q) &=\sum_{\ell \geq 0}\sigma_\ell(\alpha) q^\ell\left(=\cl(Q_{\alpha}^{\cS})\right).
\end{align}
\end{thm}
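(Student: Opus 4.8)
The plan is to prove Theorem~\ref{theorem:MO closure} by induction on the length of the negative continued fraction expansion, showing that the closure polynomials $\cl(Q_\alpha^\cR)$ and $\cl(Q_\alpha^\cS)$ satisfy the same recursions as $\cR_\alpha(q)$ and $\cS_\alpha(q)$, which are governed by the matrices $M_q^-(c_i)$ from Proposition~\ref{MO20 prop. 4.3 and 4.9}. The key structural observation I would use is that the quivers $Q_\alpha^\cR$ and $Q_\alpha^\cS$ are built from the continued-fraction data, so removing a letter from the continued fraction corresponds to removing a controlled block of arrows from the quiver, and the effect on the closure-counting generating function should match multiplication by $M_q^-(c)$.

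First I would establish the combinatorial recursions for closure polynomials directly. For a quiver $Q(\mathbf{a})$ of type $A$, an $\ell$-closure is a subset $C$ of the linearly-ordered vertex set with no arrow entering $C$ from outside; because the underlying graph is a path, a closure is determined by its behavior at each \emph{source} of the zigzag, and the generating function factors along the arrow-blocks. The main technical lemma I would isolate is a transfer-matrix description: reading the path $Q(\mathbf{a})$ from one end and tracking whether the current vertex is marked, the closure polynomial can be written as a product of $2\times 2$ matrices, one factor of the shape $M_q^-(c)$ (up to the normalizations $q^a$, $q^{a-1}$ appearing in \eqref{q-deform. mat. cont. farac.}) for each letter $c_i$ of the negative continued fraction $[[c_1,\dots,c_l]]$. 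This is exactly the point where the definition $\cl(Q):=\sum_\ell \rho_\ell(Q)q^\ell$ and the constraint ``no arrow from an unmarked vertex to a marked vertex'' get converted into linear algebra.

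Once the transfer-matrix identity is in place, the proof reduces to matching the two boundary/initialization conventions. Concretely, I would verify the base case (an integer $\alpha=n\ge 2$, where $Q_\alpha^\cR$ is a single directed path on $n-1$ arrows giving $\cl(Q_\alpha^\cR)=[n]_q$ and $Q_\alpha^\cS=Q(0)$ giving $\cl=1$, matching \eqref{[n]_q}), and then check that appending one more letter $c$ to the negative continued fraction $[[c_1,\dots,c_l]]$ prepends the corresponding arrow-block to the quiver and transforms the pair $(\cl(Q^\cR),\cl(Q^\cS))$ by left-multiplication by $M_q^-(c)$. Comparing with the defining relation $M_q^-(c_1,\dots,c_l)\binom{1}{0}=\binom{\cR_\alpha}{\cS_\alpha}$ from Proposition~\ref{MO20 prop. 4.3 and 4.9} then closes the induction. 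The relation \eqref{(0,a)=a^v} between $Q(0,\mathbf{a})$ and $Q(\mathbf{a})^\vee$, together with Lemma~\ref{lem:closure of palindromic}, handles the passage between the $\cR$- and $\cS$-quivers, since $Q_\alpha^\cS$ is obtained from $Q_\alpha^\cR$ by deleting the first $a_1$ arrows and prepending the reversal marker.

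I expect the main obstacle to be the careful bookkeeping of the boundary conventions rather than any deep idea: the two-sided nature of the zigzag quiver $Q(\mathbf{a})$ means that the transfer matrix read from the left is not literally $M_q^-(c)$ but a conjugate or transpose, and the overall normalizing powers of $q$ (the $q^a$ versus $q^{a-1}$ distinction between $M_q$ and $M_q^-$, and the factor $q^{a_2+a_4+\cdots}$ in $\widetilde{M}_q$) must be tracked exactly so that the \emph{monic, constant-term-$1$} normalization of $\cl(Q)$ lands on the correct representative $\cS_\alpha(q)$ rather than a shift of it. Reconciling the edge cases where $a_2=1$ or $m=1$ (which are singled out in the definition of $Q_\alpha^\cS$, where one or two arrows are stripped or the quiver degenerates to $Q(0)$ or $\varnothing$) with the generic recursion will require a short separate check, since in those cases the block to be removed is empty or the initialization differs.
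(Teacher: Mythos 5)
The paper itself contains no proof of Theorem~\ref{theorem:MO closure}: it is quoted verbatim from \cite[Theorem 4]{MO}, so there is no internal argument to compare yours against, and I can only assess your plan on its own terms and against the standard proof.

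Your overall strategy --- induction on the length of the continued fraction, with a transfer-matrix reading of the closure count --- is the right one and is essentially how such statements are proved. The gap is in the pivotal step: the claimed ``transfer-matrix identity'' in which $\cl(Q(\mathbf a))$ factors into matrices ``of the shape $M_q^-(c)$'' cannot hold literally. A genuine transfer matrix for closure counting (scan the path vertex by vertex, recording whether the current vertex is marked) has nonnegative entries by construction, whereas $M_q^-(c)$ contains the entry $-q^{c-1}$; the all-positive factorization corresponds to the \emph{regular} continued fraction and the matrices $M_q(a)$, $M_{q^{-1}}(a)$ inside $\widetilde M_q$, not to the $M_q^-(c)$. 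If you insist on inducting on the negative continued fraction, the inductive step you actually need is the analogue of Lemma~\ref{wakui-1.4}: writing $\alpha=[[c,c_1,\dots,c_l]]$ and $\beta=[[c_1,\dots,c_l]]$, you must show $\cl(Q^{\cR}_{\alpha})=[c]_q\,\cl(Q^{\cR}_{\beta})-q^{c-1}\cl(Q^{\cS}_{\beta})$ and $\cl(Q^{\cS}_{\alpha})=\cl(Q^{\cR}_{\beta})$. The first identity is proved by splitting the closures of the enlarged quiver according to whether the vertex at which the new $(c-1)$-vertex block is attached is marked; this gives $[c]_q\,\cl(Q^{\cR}_{\beta})$ minus a correction term supported on one of the two classes, and the entire content of the induction is the identification of that correction with $q^{c-1}\cl(Q^{\cS}_{\beta})$. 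That identification appears nowhere in your sketch, and without it the argument does not close.

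You are also too optimistic in dismissing the remaining work as ``bookkeeping'': the orientation conventions decide whether the count lands on $\cR_\alpha(q)$ or on $\cR_\alpha^\vee(q)$, and with the conventions as literally printed in Subsection~\ref{subsec2-3} one gets the wrong one. For example $\tfrac75=[1,2,1,1]$ gives $Q^{\cR}_{7/5}=Q(0,2,1,0)=\circ\too\circ\too\circ\oot\circ$, and the subsets with no arrow from an unmarked vertex into a marked vertex have generating function $1+2q+2q^2+q^3+q^4=\cR_{7/5}^\vee(q)$, the reciprocal of $\cR_{7/5}(q)=1+q+2q^2+2q^3+q^4$; the count that matches the theorem is the successor-closed (subrepresentation) one used in Section~\ref{sec-6}. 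Your base case and inductive step must therefore be verified against one fixed orientation convention, or the induction will close on the reciprocal polynomial; since $\cS_\alpha(q)$ is frequently non-palindromic, this is a real error mode and not a cosmetic one.
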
 

\subsection{Farey neighbors and Farey sums}
\label{Farey sumes and the SB tree}

In this subsection, we recall the definitions of Farey neighbors and Farey sums. 

Two irreducible fractions $\frac{x}{a}, \frac{y}{b}$ are said to be   
\textit{Farey neighbors} if $ay-bx=1$. 
Here we regard $\infty =\frac{1}{0}$ as an irreducible fraction.

For two irreducible fractions $\frac{x}{a}, \frac{y}{b}$, the operation $\sharp$ is defined as follows:
\[ \frac{x}{a}\sharp \frac{y}{b}:=\frac{x+y}{a+b}. \]
If $\frac{x}{a}, \frac{y}{b}$ are Farey neighbors, then $\frac{x}{a}\sharp \frac{y}{b}$ is called the \textit{Farey sum} of $\frac{x}{a}$ and $\frac{y}{b}$.
The Farey sum of two irreducible fractions is also irreducible. Farey neighbors have the following fundamental properties. 

\begin{lem}\label{Farey neighbors fandamentals}
The following assertions hold.
\begin{enumerate}
\item[$(1)$] Any non-negative rational number can be obtained from $\frac{0}{1}$ and $\frac{1}{0}$ applying  $\sharp$ in finitely many times. 
\item[$(2)$] For any positive rational number $\alpha \in (0,\infty)$, there uniquely exist Farey neighbors $\frac{x}{a}, \frac{y}{b}$ such that $\alpha =\frac{x}{a}\sharp \frac{y}{b}$. 
The pair $(\frac{x}{a}, \frac{y}{b})$ is called the \textit{Farey parent} of $\alpha$, and the fraction $\frac{x}{a}$ (resp. $\frac{y}{b}$) is called the left parent (resp. the right parent). 
\end{enumerate}
\end{lem}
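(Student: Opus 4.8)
The plan is to base everything on one stability property of the mediant and then treat existence by a descent and uniqueness by counting solutions of a linear Diophantine equation.

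The first step is to record that $\sharp$ preserves the Farey-neighbor relation. If $\frac{x}{a},\frac{y}{b}$ are Farey neighbors, so that $ay-bx=1$ and hence $\frac{x}{a}<\frac{y}{b}$, then the two identities
\[
a(x+y)-(a+b)x=ay-bx=1,\qquad (a+b)y-b(x+y)=ay-bx=1
\]
show that $\bigl(\frac{x}{a},\frac{x+y}{a+b}\bigr)$ and $\bigl(\frac{x+y}{a+b},\frac{y}{b}\bigr)$ are again Farey neighbors, with $\frac{x}{a}<\frac{x+y}{a+b}<\frac{y}{b}$; the first identity also shows that any common divisor of $x+y$ and $a+b$ divides $1$, so the Farey sum is irreducible. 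Note that the seed pair $\bigl(\frac{0}{1},\frac{1}{0}\bigr)$ is a Farey-neighbor pair whose open interval is the whole of $(0,\infty)$.

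For existence, which simultaneously gives assertion (1) and the existence half of (2), I would run a bisection for a given $\alpha=\frac{r}{s}>0$ in lowest terms: starting from $\bigl(\frac{0}{1},\frac{1}{0}\bigr)$, at a stage with Farey neighbors $\frac{x}{a}<\alpha<\frac{y}{b}$ I compare $\alpha$ with the mediant $\frac{x+y}{a+b}$, stop if they agree, and otherwise recurse on the unique half still containing $\alpha$, which is again a Farey-neighbor pair by the previous step. For termination, once both endpoints are finite (so $a,b\ge 1$) the inequalities $ra-sx\ge 1$ and $sy-rb\ge 1$ give
\[
\frac{1}{ab}=\frac{y}{b}-\frac{x}{a}=\Bigl(\frac{y}{b}-\frac{r}{s}\Bigr)+\Bigl(\frac{r}{s}-\frac{x}{a}\Bigr)\ge\frac{1}{sb}+\frac{1}{sa}=\frac{a+b}{sab},
\]
so $a+b\le s$; as the denominator sum strictly increases at each such step, only finitely many can occur, and the preceding steps (while the right endpoint is still $\frac{1}{0}$) merely compute $\lfloor\alpha\rfloor$ and are trivially finite. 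The process therefore stops, necessarily with $\alpha$ equal to a mediant, which exhibits $\alpha$ as a finite iteration of $\sharp$ and yields a Farey parent; together with $0=\frac{0}{1}$ this proves (1).

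For uniqueness in (2), suppose $\frac{r}{s}=\frac{x}{a}\sharp\frac{y}{b}$ with $\frac{x}{a},\frac{y}{b}$ Farey neighbors. Irreducibility of the Farey sum forces $x+y=r$ and $a+b=s$, and substituting $y=r-x$, $b=s-a$ into $ay-bx=1$ collapses it to the single equation $ar-sx=1$. Its integer solutions form one arithmetic progression $(a,x)=(a_0+ks,\,x_0+kr)$, and I would verify that the admissibility ranges $1\le a\le s$ and $0\le x\le r$ forced by $a,b\ge 0$ and $x,y\ge 0$ single out exactly one $k$, using $\gcd(r,s)=1$. I expect the only real nuisance to be the boundary bookkeeping around the endpoints $\frac{0}{1}$ and $\infty=\frac{1}{0}$ and the case of integer $\alpha$ (where $s=1$ and the right parent is $\frac{1}{0}$); each of these must be checked by hand, but each admits a unique valid choice. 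The two load-bearing computations are thus the bound $a+b\le s$ and this Diophantine count, and the main obstacle is organizing the degenerate cases rather than any conceptual difficulty.
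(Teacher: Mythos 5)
Your argument is correct. Note that the paper does not prove this lemma itself but defers to \cite[Theorem 3.9]{Aigner} and \cite[Lemma 3.5]{Kogiso-Wakui_Proc}; your mediant--bisection (Stern--Brocot) argument is precisely the standard proof found there, with the two load-bearing points --- the termination bound $a+b\le s$ and the reduction of uniqueness to $ar-sx=1$ with $1\le a\le s$ --- both sound. The one verification you defer does close cleanly: from $ar-sx=1$ and $1\le a\le s$ one gets $0\le x<r$ automatically (and $x\ge 0$ since $r\ge 1$), so the window on $a$ alone already singles out the unique $k$, and the degenerate cases ($\alpha$ an integer, where the right parent is $\frac{1}{0}$) fall under the same count.
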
 
For proof of the above lemma, see \cite[Theorem 3.9]{Aigner} or \cite[Lemma 3.5]{Kogiso-Wakui_Proc}.

\medskip

Let $\alpha$ and $\beta$ be two fractions with $\alpha,\beta\geq 1$.
If $\alpha\sharp\beta=[[c_1,\ldots, c_{l}]]$, then the equation
\begin{equation}\label{q-deformed farey sum}
    [\alpha\sharp\beta]_q=\dfrac{\mathcal{R}_{\alpha}(q)+q^{c_l-1}\mathcal{R}_{\beta}(q)}{\mathcal{S}_{\alpha}(q)+q^{c_l-1}\mathcal{S}_{\beta}(q)}
\end{equation}
holds, see \cite[Theorem 3]{MO}.

\section{A proof of the sufficiency of the conjecture} \label{sec-3}

In this section, without the assumption that $p$ is a prime number, we will show that $ab \equiv -1 \pmod{p}$ implies 
$
\cS_{\frac{a}{p}}(q)=\cS_{\frac{b}{p}}(q),
$
that is, the sufficiency part of Conjecture \ref{1-7} holds. 
Recall that $\cS_{\alpha+n}(q)=\cS_{\alpha}(q)$ for all $\alpha \in \QQ$ and $n \in \mathbb{Z}$. 

In the rest of the paper, $p$ means a (not necessarily prime) integer with $p \ge 2$, unless otherwise specified.

\begin{lem}\label{-r}
Let $\frac{a}{p},  \frac{b}{p}$ be irreducible fractions  with $a \equiv -b \pmod{p}$. 
We may assume that $ \frac{a}{p} - \left \lfloor  \frac{a}{p} \right \rfloor \le  \frac{1}{2}$, and hence $\frac{a}{p}=[a_1, a_2, \ldots, a_k]$ with $a_2 \ge 2$ as a regular continued  fraction.  
Then we have $\frac{b}{p}=[b_1, 1, a_2-1, a_3,\ldots, a_k]$, where $b_1 =\left\lfloor  \frac{b}{p} \right\rfloor$. 
\end{lem}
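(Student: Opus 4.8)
The plan is to reduce the statement to a single elementary identity between regular continued fractions: passing from a number $\theta\in(0,1)$ to its reflection $1-\theta$ should turn the expansion $[0,a_2,a_3,\ldots,a_k]$ into $[0,1,a_2-1,a_3,\ldots,a_k]$ whenever $a_2\ge 2$. Everything else is a translation between this identity and the arithmetic hypothesis.

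First I would translate the congruence into a statement about fractional parts. Since $\gcd(a,p)=\gcd(b,p)=1$ and $p\ge 2$, neither $\frac ap$ nor $\frac bp$ is an integer. From $a\equiv -b\pmod p$ we get $a+b\equiv 0$, so $\frac ap+\frac bp\in\ZZ$; writing $\theta:=\frac ap-\lfloor \frac ap\rfloor\in(0,1)$ for the fractional part of $\frac ap$, this forces $\frac bp-\lfloor\frac bp\rfloor=1-\theta$. (The assumption $\theta\le\frac12$ is harmless by the symmetry $a\equiv -b\iff b\equiv -a$, since $\{\frac ap\}+\{\frac bp\}=1$ means one of the two fractional parts is $\le\frac12$, and we label that fraction $\frac ap$.) Thus, with $b_1:=\lfloor\frac bp\rfloor$, it remains only to show that the fractional part $1-\theta$ has the expansion $[0,1,a_2-1,a_3,\ldots,a_k]$.

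Next I would record that the hypothesis $\theta\le\frac12$ is equivalent to $a_2\ge 2$. Writing $\theta=[0,a_2,\ldots,a_k]=\frac{1}{a_2+\beta}$ with $\beta:=[0,a_3,\ldots,a_k]\in[0,1)$ (and $\beta=0$ when $k=2$), one has $\theta\in\bigl(\tfrac1{a_2+1},\tfrac1{a_2}\bigr]$, so $\theta\le\frac12\iff a_2\ge 2$. This is also exactly what guarantees $a_2-1\ge 1$, so that the target expansion is an admissible regular continued fraction. The identity then follows from the direct computation
$$\frac{1}{1-\theta}=\frac{a_2+\beta}{(a_2-1)+\beta}=1+\frac{1}{(a_2-1)+\beta},$$
combined with $(a_2-1)+\beta=[a_2-1,a_3,\ldots,a_k]$, which yields $1-\theta=[0,1,a_2-1,a_3,\ldots,a_k]$. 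Adding back the integer part gives $\frac bp=b_1+(1-\theta)=[b_1,1,a_2-1,a_3,\ldots,a_k]$, as claimed.

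I expect the only delicate points to be bookkeeping rather than substance. One must treat the boundary case $k=2$, where the tail $a_3,\ldots,a_k$ and hence $\beta$ are empty (so the identity reads $1-\frac1{a_2}=[0,1,a_2-1]$, and the degenerate $p=2$ instance $\theta=\frac12$ is included here), and one must confirm that the constructed expansion is genuinely a regular continued fraction, i.e.\ that every partial quotient after the first is positive — which is precisely where $a_2\ge 2$ enters. The reduction to fractional parts is the one step where the coprimality and congruence hypotheses must be invoked with care, but it is otherwise immediate.
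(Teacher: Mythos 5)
Your proposal is correct and follows essentially the same route as the paper: both reduce to the fractional parts (the paper normalizes to $0<\frac{a}{p}\le\frac{b}{p}<1$ with $b=p-a$, you abstract to $\theta$ and $1-\theta$) and then perform the identical continued-fraction manipulation $\frac{1}{1-\theta}=1+\frac{1}{(a_2-1)+\beta}$, which in the paper appears as $\frac{p-a}{p}=\bigl(1+\frac{1}{(a_2-1)+\frac{p-aa_2}{a}}\bigr)^{-1}$. Your extra bookkeeping (the equivalence $\theta\le\frac12\iff a_2\ge2$ and the $k=2$ boundary case) is accurate and slightly more careful than the paper's own write-up.
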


\begin{proof}
Since the assertion only depends on the decimal parts of $\frac{a}{p}$ and $\frac{b}{p}$, we may assume that $0< \frac{a}{p} \le \frac{b}{p} <1$. 
Then we have $b=p-a$, 
$$\frac{a}{p}=\cfrac{1}{\cfrac{p}{a}}=\cfrac{1}{a_2+\cfrac{p-aa_2}{a}}$$ 
and 
$$
\frac{b}{p}=\cfrac{p-a}{p}=\frac{1}{\cfrac{p}{p-a}}=\cfrac{1}{1+\cfrac{a}{p-a}}=\cfrac{1}{1+\cfrac{1}{\cfrac{p-a}{a}}}=\cfrac{1}{1+\cfrac{1}{(a_2-1)+\cfrac{p-aa_2}{a}}}. $$
If $k >2$, we have  
$\dfrac{p-a a_2}{a}=[a_3, a_4, \ldots, a_k]$, and the assertion follows. 
\end{proof}

\begin{prop}\label{dual1}
Let $\frac{a}{p},  \frac{b}{p}$ be irreducible fractions with $a \equiv -b \pmod{p}$. Then we have $\cS_{\frac{a}{p}}(q)=\cS^\vee_{\frac{b}{p}}(q)$. 
\end{prop}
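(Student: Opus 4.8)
The plan is to combine the continued-fraction description of $\frac ap$ and $\frac bp$ from Lemma~\ref{-r} with the closure-polynomial interpretation of $\cS_\alpha(q)$ from Theorem~\ref{theorem:MO closure}, reducing the claimed identity $\cS_{\frac ap}(q)=\cS^\vee_{\frac bp}(q)$ to an isomorphism (up to reversal) of the associated quivers. By Lemma~\ref{wakui-1.5} the polynomials $\cS_\alpha(q)$ depend only on the fractional part of $\alpha$, so I may freely normalize. Using Lemma~\ref{-r}, write $\frac ap=[a_1,a_2,\dots,a_k]$ with $a_2\ge 2$, and then $\frac bp=[b_1,1,a_2-1,a_3,\dots,a_k]$. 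My first step is to pass to the quivers $Q^\cS_{\frac ap}$ and $Q^\cS_{\frac bp}$ built from these expansions via the recipe preceding Theorem~\ref{theorem:MO closure}, keeping in mind Remark~\ref{odd length}, which licenses using regular continued fractions of either parity here (both fractions are non-integral and exceed $1$ after a suitable shift, or one works with the fractional part directly).

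The key computational step is to identify the two quivers explicitly. Recall that $Q^\cS_\alpha$ is obtained from $Q^\cR_\alpha=Q(a_1-1,a_2,\dots,a_{k}-1)$ by deleting the first $a_1$ arrows, so $Q^\cS_{\frac ap}$ depends only on the tail $(a_2,\dots,a_k)$ with the last entry decremented; concretely it is (a shift of) $Q(0,a_2-1,a_3,\dots,a_{k}-1)$. For $\frac bp$, the tail after the leading term $b_1$ is $(1,a_2-1,a_3,\dots,a_k)$, and again the leading part is stripped when forming $Q^\cS_{\frac bp}$, yielding a quiver built from $(a_2-1,a_3,\dots,a_{k}-1)$ but with the \emph{opposite} initial bookkeeping (the inserted $1$ shifts the alternation of arrow directions by one). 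The heart of the argument is that these two quivers are related precisely by reversing all arrows, i.e.\ $Q^\cS_{\frac bp}\simeq (Q^\cS_{\frac ap})^\vee$ as quivers. This is exactly the kind of statement packaged in Lemma~\ref{lem:closure of palindromic} and equations~\eqref{(0,a)=a^v}, \eqref{opposite closure poly 2}: inserting a $1$ into the continued fraction and decrementing $a_2$ toggles the parity of the defining sequence's length, which by Lemma~\ref{lem:closure of palindromic} is exactly what swaps $Q(\mathbf a)$ and $Q(\mathbf a)^\vee$.

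Once the quiver isomorphism $Q^\cS_{\frac bp}\simeq (Q^\cS_{\frac ap})^\vee$ is established, the conclusion is immediate: by Theorem~\ref{theorem:MO closure} we have $\cS_{\frac bp}(q)=\cl(Q^\cS_{\frac bp})=\cl\bigl((Q^\cS_{\frac ap})^\vee\bigr)$, and by~\eqref{opposite closure poly 2} this equals $\cl(Q^\cS_{\frac ap})^\vee=\cS^\vee_{\frac ap}(q)$. Taking reciprocals of both sides (the reciprocal operation is an involution on palindromic-degree polynomials) then gives $\cS_{\frac ap}(q)=\cS^\vee_{\frac bp}(q)$, as desired.

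I expect the main obstacle to be the careful bookkeeping of arrow directions and the handling of the deleted initial segment when forming $Q^\cS$ from $Q^\cR$, particularly matching the inserted entry $1$ in $\frac bp$'s expansion against the parity shift. Edge cases — when $k$ is small (say $k=2$, where the tail is empty), when $a_2-1=1$ so that an extra collapse of consecutive entries occurs, or when the fractional-part normalization forces one of the leading entries to vanish — will each need separate but routine verification, and I would dispatch these by direct inspection against the degenerate-quiver conventions ($Q(0)$, $\varnothing$, $\cl(Q(0,0))=1+q$) recorded just before Lemma~\ref{lem:closure of palindromic}.
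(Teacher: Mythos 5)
Your proposal is correct and follows essentially the same route as the paper: normalize via Lemma~\ref{-r}, identify $Q^{\cS}_{\frac{a}{p}}=Q(0,a_2-1,a_3,\dots,a_k-1)$ and $Q^{\cS}_{\frac{b}{p}}=Q(a_2-1,a_3,\dots,a_k-1)$, and conclude via $Q(0,\mathbf a)\simeq Q(\mathbf a)^{\vee}$ together with Theorem~\ref{theorem:MO closure}. The only cosmetic difference is that the paper invokes \eqref{(0,a)=a^v} directly rather than routing through Lemma~\ref{lem:closure of palindromic}, and states the identity without the final reciprocal-taking step.
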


\begin{proof}
We may assume that $1 < \frac{a}{p},  \frac{b}{p} <2$, and $\frac{a}p=[1, a_2, \ldots, a_k]$ with $a_2 \ge 2$.  Then we have  
$\frac{b}{p}=[1, 1, a_2-1, a_3, \ldots, a_k]$  
by Lemma~\ref{-r}. 
With the notation of the previous section, we have 
$$Q_{\frac{a}p}^{\cS} =Q(0,a_2-1, a_3, \ldots, a_k-1) 
\quad 
\text{and} 
\quad
Q_{\frac{b}p}^{\cS} = Q(a_2-1, a_3, \ldots, a_k-1) 
$$
(by  Remark~\ref{odd length}, we do not have to care about the parity of the length of the regular continued fraction).
Hence we have $Q_{\frac{b}p}^{\cS}=(Q_{\frac{a}p}^{\cS})^\vee$ by \eqref{(0,a)=a^v}, and $$\cS_{\frac{b}p}(q)=\cl(Q_{\frac{b}p}^{\cS})=\cl((Q_{\frac{a}p}^{\cS})^\vee)=\cl(Q_{\frac{a}p}^{\cS})^\vee=\cS_{\frac{a}p}^{\vee}(q)$$
by Theorem~\ref{theorem:MO closure} and \eqref{(0,a)=a^v}. 
\end{proof}

The following lemma is a variant of ``Palindrome Theorem"  (for example, see \cite[Theorem 4]{KL}) for continued fractions. We will give a direct proof here for the reader's convenience.  
 
\begin{lem}\label{1/r}
 Let $\frac{a}{p},  \frac{b}{p}$ be irreducible fractions with $\frac{a}p=[a_1, a_2, a_3, \ldots, a_n]$ as a regular continued fraction.  Set $b_1:= \left\lfloor \frac{b}{p} \right\rfloor$. Then $\frac{b}p=[b_1, a_{n}, a_{n-1}, \ldots,  a_2]$ if and only if $ab\equiv (-1)^n \pmod p$. 
 \end{lem}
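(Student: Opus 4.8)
The plan is to translate the statement into the matrix formalism of \eqref{mat. cont. farac.}, exploiting the single key fact that each generator $M(a)=\begin{pmatrix} a & 1 \\ 1 & 0\end{pmatrix}$ is symmetric. Consequently, reversing a finite regular continued fraction corresponds to transposing the associated product matrix, i.e. $M(a_1,\ldots,a_n)^{T}=M(a_n,\ldots,a_1)$, and this symmetry is the real content of the ``Palindrome Theorem'' here.

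First, following (an odd-length version of) Lemma~\ref{MO19 prop.3.1}, I would write
$$M(a_1,\ldots,a_n)=\begin{pmatrix} a & a' \\ p & p'\end{pmatrix},$$
where the second column records the penultimate convergent $\frac{a'}{p'}=[a_1,\ldots,a_{n-1}]$; in particular $0\le p'<p$ by the usual monotonicity of continued-fraction denominators. Taking determinants gives $ap'-a'p=(-1)^n$, hence the congruence $ap'\equiv(-1)^n\pmod p$, which carries all of the arithmetic in the lemma.

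Second, I would compute the value of $[b_1,a_n,a_{n-1},\ldots,a_2]$ purely matricially. Stripping the leftmost factor via $M(a_2,\ldots,a_n)=M(a_1)^{-1}M(a_1,\ldots,a_n)$ and then transposing yields
$$M(a_n,\ldots,a_2)=M(a_1,\ldots,a_n)^{T}M(a_1)^{-1}=\begin{pmatrix} p & a-a_1p \\ p' & a'-a_1p'\end{pmatrix}.$$
Reading off the first column gives $[a_n,\ldots,a_2]=\frac{p}{p'}$, so that $[b_1,a_n,\ldots,a_2]=b_1+\frac{p'}{p}=\frac{b_1p+p'}{p}$, an irreducible fraction with denominator $p$.

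Finally, I would match this against $\frac{b}{p}$. Because $0\le p'<p$, the integer part of $\frac{b_1p+p'}{p}$ is exactly $b_1$, so the equality $\frac{b}{p}=[b_1,a_n,\ldots,a_2]$ holds if and only if $b=b_1p+p'$, i.e. $b\equiv p'\pmod p$. Since $\gcd(a,p)=1$, multiplying by $a$ shows this is equivalent to $ab\equiv ap'\equiv(-1)^n\pmod p$, which is the asserted equivalence. The steps requiring care, rather than genuine obstacles, are the extension of Lemma~\ref{MO19 prop.3.1} to odd length and the bound $0\le p'<p$ (including the degenerate case $n=1$, where $p'=0$ and the congruence is modulo $p=1$, and the harmless $a_n=1$ ambiguity of the regular continued fraction); once these standard facts about convergents are in hand, the whole argument reduces to the transpose identity together with a single determinant computation.
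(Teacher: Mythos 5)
Your proof is correct and rests on the same key fact as the paper's: the symmetry of each factor $M(a)$, so that transposing the product $M(a_1,\ldots,a_n)$ reverses the continued fraction, combined with the determinant identity $ap'-a'p=(-1)^n$. The only organizational difference is that you obtain both directions at once by computing the value of $[b_1,a_n,\ldots,a_2]$ explicitly as $\frac{b_1p+p'}{p}$, whereas the paper proves the forward implication and then invokes the uniqueness of solutions of $\overline{a}\cdot x=\pm\overline{1}$ in $\ZZ/p\ZZ$ for the converse; this is a cosmetic rather than substantive difference.
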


\begin{proof}
Clearly, it suffices to show the case $a_1=b_1=0$. 
First, assume that $\frac{b}{p}=[0, a_n, \ldots, a_2]$. By Lemma~\ref{MO19 prop.3.1}, we have 
\[
\begin{pmatrix}
a&k\\p&l \end{pmatrix}
=\begin{pmatrix}
0&1\\1&0 \end{pmatrix}
\begin{pmatrix}
a_2&1\\1&0 \end{pmatrix}
\cdots
\begin{pmatrix}
a_n&1\\1&0 \end{pmatrix}
\]
for some  $k, l \in \ZZ$. 
Hence we have 
\[
\begin{pmatrix}
p&l\\a&k \end{pmatrix}
=\begin{pmatrix}
a_2&1\\1&0 \end{pmatrix}
\begin{pmatrix}
a_3&1\\1&0 \end{pmatrix}
\cdots
\begin{pmatrix}
a_n&1\\1&0 \end{pmatrix}.
\]
Taking the transpose of both sides, we get  
\[
\begin{pmatrix}
p&a\\l&k \end{pmatrix}
=\begin{pmatrix}
a_n&1\\1&0 \end{pmatrix}
\begin{pmatrix}
a_{n-1}&1\\1&0 \end{pmatrix}
\cdots
\begin{pmatrix}
a_2&1\\1&0 \end{pmatrix}.
\]
Hence we have 
\begin{equation}\label{inverse}
\begin{pmatrix}
l&k\\p&a \end{pmatrix}
=\begin{pmatrix}
0&1\\1&0 \end{pmatrix}
\begin{pmatrix}
a_{n}&1\\1&0 \end{pmatrix}
\cdots
\begin{pmatrix}
a_2&1\\1&0 \end{pmatrix},
\end{equation}
and it implies that $\frac{l}{p}=[0, a_n, \ldots, a_2]=\frac{b}p$, and hence $l=b$. The determinant of the right side of \eqref{inverse} is $(-1)^n$, so that of the left side is also. It implies that $ab-pk=(-1)^n$, and hence  
$ab \equiv (-1)^n \pmod{p}$.   

The converse implication follows from the above observation and the uniqueness of the solution of $\overline{a} \cdot x= \pm \overline{1}$ in $\ZZ/p\ZZ$.  
\end{proof}

\begin{prop}\label{dual2}
Let $\frac{a}{p},  \frac{b}{p}$ be irreducible fractions with $ab \equiv 1 \pmod{p}$. 
Then we have $\cS_{\frac{a}{p}}(q)=\cS^\vee_{\frac{b}{p}}(q)$. 
\end{prop}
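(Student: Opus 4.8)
The plan is to prove Proposition~\ref{dual2} by the same mechanism as Proposition~\ref{dual1}, but replacing the ``prepend a digit'' operation with the continued-fraction reversal supplied by Lemma~\ref{1/r}, and then reading everything off from the closure-polynomial description of $\cS_\alpha(q)$ in Theorem~\ref{theorem:MO closure}. Since $\cS_\alpha(q)$ depends only on $\alpha$ modulo $\ZZ$ by Lemma~\ref{wakui-1.5}, and the congruence $ab\equiv 1\pmod p$ is unchanged when $a$ or $b$ is shifted by a multiple of $p$, I would first normalize so that $1<\tfrac ap,\tfrac bp<2$; in particular both expansions begin with the digit $1$, and neither fraction is an integer.

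First I would fix a regular continued fraction $\tfrac ap=[a_1,a_2,\ldots,a_n]$ of \emph{even} length $n$, which is always achievable via $[\ldots,a_n]=[\ldots,a_n-1,1]$. Because $n$ is even we have $(-1)^n=1$, so the hypothesis $ab\equiv 1\pmod p$ together with Lemma~\ref{1/r} forces $\tfrac bp=[b_1,a_n,a_{n-1},\ldots,a_2]$, which again has even length $n$.

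Next I would translate both fractions into quivers. Writing $\mathbf{c}:=(a_2-1,a_3,\ldots,a_{n-1},a_n-1)$, the definition of $Q_\alpha^\cS$ gives $Q_{\frac ap}^\cS=Q(0,\mathbf{c})$, while feeding the reversed digit string of $\tfrac bp$ into the same definition yields $Q_{\frac bp}^\cS=Q(0,\mathbf{c}^{\pal})$, where $\mathbf{c}^{\pal}$ is the palindromic reversal. Since $\mathbf{c}$ has \emph{odd} length $n-1$, Lemma~\ref{lem:closure of palindromic} gives $\cl(Q(\mathbf{c}^{\pal}))=\cl(Q(\mathbf{c}))^\vee$. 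Stripping the leading zeros via \eqref{(0,a)=a^v} and invoking Theorem~\ref{theorem:MO closure}, I then obtain
\[
\cS_{\frac ap}(q)=\cl(Q(\mathbf{c}))^\vee,
\qquad
\cS_{\frac bp}(q)=\bigl(\cl(Q(\mathbf{c}))^\vee\bigr)^\vee=\cl(Q(\mathbf{c})),
\]
where the last step uses $(f^\vee)^\vee=f$ for $f$ with nonzero constant term (closure polynomials have constant term $1$). Comparing the two displays gives precisely $\cS_{\frac ap}(q)=\cS^\vee_{\frac bp}(q)$.

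I expect the one genuine obstacle to be the boundary bookkeeping: the definition of $Q_\alpha^\cS$ takes special forms when $m=1$ (that is, $n=2$) and when $a_2=1$, so these degenerate expansions must be checked by hand, or else ruled out or adjusted using the normalization and Remark~\ref{odd length}. Once the generic quiver matching is set up correctly, the palindromic symmetry of Lemma~\ref{lem:closure of palindromic} does all the work, so the entire content lies in correctly aligning the two continued fractions and their associated quivers.
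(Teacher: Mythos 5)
Your argument is correct and essentially identical to the paper's proof: both normalize to the interval $(1,2)$, reverse the continued fraction via Lemma~\ref{1/r} (using the even-length convention so that $(-1)^n=1$), and compare the quivers $Q(0,\mathbf{c})$ and $Q(0,\mathbf{c}^{\pal})$ through Lemma~\ref{lem:closure of palindromic}, \eqref{(0,a)=a^v} and Theorem~\ref{theorem:MO closure}. The boundary cases you flag ($a_2=1$ or $n=2$) are absorbed by the paper's stated conventions for $Q_\alpha^{\cS}$ and cause no difficulty.
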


\begin{proof} 
We may assume that $1 < \frac{a}{p},  \frac{b}{p} < 2$. 
If $\frac{a}p=[1, a_2, \ldots, a_{2m}]$, then $\frac{b}p=[1, a_{2m}, \ldots, a_2]$ by  Lemma~\ref{1/r}.  
Hence we have 
$$
Q_{\frac{b}p}^{\cS} = Q(a_{2m}-1, a_{2m-1}, \ldots,  a_2-1)^\vee \simeq Q(a_2-1, a_3, \ldots, a_{2m}-1 )=(Q_{\frac{a}p}^{\cS})^\vee$$
by Lemma~\ref{lem:closure of palindromic}. So the assertion follows from Theorem~\ref{theorem:MO closure} and \eqref{(0,a)=a^v}. 
\end{proof}

The following implies the sufficiency of Conjecture~\ref{1-7}.

\begin{thm}\label{sufficiency}
Let $p$ be a positive integer. 
For irreducible fractions $\frac{a}{p},  \frac{b}{p}$ with $ab \equiv -1 \pmod{p}$, we have 
$\cS_{\frac{a}{p}}(q)=\cS_{\frac{b}{p}}(q)$. 
\end{thm}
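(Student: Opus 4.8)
The plan is to combine the two duality results already established, namely Proposition~\ref{dual1} (which handles $a \equiv -b \pmod p$) and Proposition~\ref{dual2} (which handles $ab \equiv 1 \pmod p$), to deduce the case $ab \equiv -1 \pmod p$. The key observation is that the reciprocal operation $f \mapsto f^\vee$ is an involution on $\ZZ[q]$: applying it twice returns the original polynomial. So if I can route from $\frac{a}{p}$ to $\frac{b}{p}$ through two applications of the duality, the two $\vee$'s cancel and I obtain an honest equality $\cS_{\frac{a}{p}}(q) = \cS_{\frac{b}{p}}(q)$ rather than a relation between a polynomial and its reciprocal.

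First I would introduce an auxiliary fraction $\frac{c}{p}$ to bridge the gap. Given $ab \equiv -1 \pmod p$, set $c$ to be (a representative of) $-a \bmod p$, so that $c \equiv -a \pmod p$. Then on one hand, by Proposition~\ref{dual1} applied to the pair $(\frac{a}{p}, \frac{c}{p})$, we have $\cS_{\frac{a}{p}}(q) = \cS^\vee_{\frac{c}{p}}(q)$. On the other hand, from $ab \equiv -1$ and $c \equiv -a$ we get $cb \equiv -ab \equiv 1 \pmod p$, so Proposition~\ref{dual2} applied to the pair $(\frac{c}{p}, \frac{b}{p})$ yields $\cS_{\frac{c}{p}}(q) = \cS^\vee_{\frac{b}{p}}(q)$. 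Taking the reciprocal of this last equation gives $\cS^\vee_{\frac{c}{p}}(q) = (\cS^\vee_{\frac{b}{p}})^\vee(q) = \cS_{\frac{b}{p}}(q)$, where the final equality uses that $\vee$ is an involution. Chaining these together produces
\[
\cS_{\frac{a}{p}}(q) = \cS^\vee_{\frac{c}{p}}(q) = \cS_{\frac{b}{p}}(q),
\]
which is exactly the assertion.

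The main conceptual content has essentially already been discharged in Propositions~\ref{dual1} and \ref{dual2}, so this final theorem is a short bookkeeping argument. The one point that requires a modicum of care is verifying that $\vee$ is a genuine involution on the relevant class of polynomials, i.e. that $(f^\vee)^\vee = f$; this holds precisely when $f$ has nonzero constant term (so that $\deg f^\vee = \deg f$), and indeed every $\cS_\alpha(q)$ has constant term $1$ by the remarks following the closure-polynomial discussion. I would note this explicitly so that the degree accounting in $q^{\deg f} f(q^{-1})$ is unambiguous. Beyond that, I expect no obstacle: the modular arithmetic $c \equiv -a$, $cb \equiv 1$ is immediate, and both input propositions are cited directly.
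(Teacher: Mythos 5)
Your proposal is correct and is essentially identical to the paper's own proof, which likewise deduces the theorem by chaining Propositions~\ref{dual1} and \ref{dual2} through an intermediate fraction and using that $f^{\vee\vee}(q)=f(q)$. Your explicit remark that the involution property requires a nonzero constant term (guaranteed here since every $\cS_\alpha(q)$ has constant term $1$) is a small but worthwhile refinement of the paper's blanket statement that $f^{\vee\vee}=f$ for general $f\in\ZZ[q]$.
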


\begin{proof}
The assertion follows from Propositions~\ref{dual1}, \ref{dual2} and the fact that $f^{\vee\vee}(q)=f(q)$ for general $f(q)\in \ZZ[q]$. 
\end{proof}

{We note that, for the numerator $\mathcal{R}_{\frac{r}{s}}(q)$ for $\frac{r}s >1$, a similar result holds. See Lemma \ref{KRS-conj.2} below. }

Regarding $Q_\alpha^\cS$ as a finite poset, O\v{g}uz and Ravichandran \cite{OR} intensely studied $\cS_\alpha(q)$ from purely combinatorial point of view. 
Among other things, they showed that $\cS_\alpha(q)$ is always unimodal. 
Here we apply their another result. A polynomial $f(q)$ is said to be \textit{palindromic} if $f^{\vee}(q)=f(q)$. 
\begin{thm}\label{palindromic}
For an irreducible fraction $\frac{r}{s}$, $\cS_{\frac{r}{s}}(q)$ is palindromic if and only if $r^2 \equiv 1 \pmod{s}$.
\end{thm}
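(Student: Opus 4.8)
The plan is to prove both directions by reducing to the duality results (Propositions~\ref{dual1} and~\ref{dual2}) already established in this section, and combining them with the key observation that $r^2 \equiv 1 \pmod{s}$ is precisely the condition that identifies a fraction with its own ``inverse'' class up to sign. Throughout I would freely normalize $\frac{r}{s}$ using $\cS_{\alpha+n}(q)=\cS_\alpha(q)$, so that I may assume $1 < \frac{r}{s} < 2$ whenever convenient, and I may replace $r$ by any representative of its residue class mod $s$.

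For the \emph{sufficiency} direction, suppose $r^2 \equiv 1 \pmod{s}$. Set $b := r$, so that $rb \equiv 1 \pmod s$ trivially. By Proposition~\ref{dual2} applied to the pair $\frac{r}{s}, \frac{b}{s}=\frac{r}{s}$, we obtain $\cS_{\frac{r}{s}}(q)=\cS^\vee_{\frac{r}{s}}(q)$, which is exactly the statement that $\cS_{\frac{r}{s}}(q)$ is palindromic. This is the easy half, and it is essentially immediate once Proposition~\ref{dual2} is in hand.

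For the \emph{necessity} direction, suppose $\cS_{\frac{r}{s}}(q)$ is palindromic, i.e. $\cS_{\frac{r}{s}}(q)=\cS^\vee_{\frac{r}{s}}(q)$. The idea is to produce an $a$ with $ra \equiv 1 \pmod s$ (choose $a$ to be the inverse of $r$ mod $s$, which exists since $\gcd(r,s)=1$). Then Proposition~\ref{dual2} gives $\cS_{\frac{a}{s}}(q)=\cS^\vee_{\frac{r}{s}}(q)=\cS_{\frac{r}{s}}(q)$, the last equality by the palindromic hypothesis. The crux is then to invoke a \emph{converse} to the ``sufficiency'' machinery: I would appeal to the combinatorial result of O\v{g}uz and Ravichandran \cite{OR} cited in the paragraph preceding the statement, which should give that the map $\alpha \mapsto \cS_\alpha(q)$ distinguishes residue classes sufficiently to conclude from $\cS_{\frac{a}{s}}(q)=\cS_{\frac{r}{s}}(q)$ that $a \equiv r \pmod s$. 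Combined with $ra \equiv 1$, this yields $r^2 \equiv ra \equiv 1 \pmod s$.

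The \textbf{main obstacle} is precisely this last injectivity-type input: the sufficiency Theorem~\ref{sufficiency} only shows that the relations $a \equiv b$ and $ab \equiv -1$ \emph{force} equality of denominator polynomials, but the necessity half requires a converse statement (equality of $\cS$ forces one of these congruences), and the general converse is exactly what Conjecture~\ref{1-7} asserts and is \emph{not} known. So I do not expect a self-contained proof from the section's duality lemmas alone; I expect the real content to be the specific combinatorial theorem from \cite{OR} that controls when two $\cS_\alpha(q)$ coincide in the palindromic case. The decisive step is translating the palindromic identity $\cl(Q_\alpha^\cS)=\cl(Q_\alpha^\cS)^\vee$ into a statement about the continued-fraction data via Lemma~\ref{1/r}, and I anticipate that the \cite{OR} result is what bridges the gap between the polynomial equality and the numerical congruence $r^2 \equiv 1$.
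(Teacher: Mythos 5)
Your sufficiency argument is correct and is in fact a different (and more self-contained) route than the paper's: taking $a=b=r$ in Proposition~\ref{dual2} immediately gives $\cS_{\frac{r}{s}}(q)=\cS^\vee_{\frac{r}{s}}(q)$ whenever $r^2\equiv 1\pmod{s}$, with no combinatorial input needed. The paper instead obtains both directions simultaneously from a single characterization, so your observation that one half is ``essentially immediate'' from the duality propositions is a genuine simplification of that half.

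The necessity direction, however, has a real gap as you present it. Your primary route --- choose $a$ with $ra\equiv 1\pmod{s}$, deduce $\cS_{\frac{a}{s}}(q)=\cS_{\frac{r}{s}}(q)$, and then conclude $a\equiv r\pmod{s}$ --- requires an injectivity statement for $\alpha\mapsto\cS_\alpha(q)$ that is not available: it is essentially the open (necessity) half of Conjecture~\ref{1-7}, and for composite $s$ the map genuinely fails to separate residue classes (e.g.\ $\cS_{\frac{5}{24}}=\cS_{\frac{11}{24}}$). Moreover the result of \cite{OR} does not say anything about when two distinct fractions share a denominator polynomial; what \cite[Theorem~1.3(c)]{OR} actually provides is that $\cl(Q(\mathbf{b}))$ is palindromic if and only if the sequence $\mathbf{b}$ itself is symmetric, i.e.\ $b_i=b_{k+1-i}$ for all $i$. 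Your closing paragraph correctly anticipates the right argument --- and it is exactly the paper's proof: write $\frac{r}{s}=[a_1,\ldots,a_{2m}]$, so $\cS_{\frac{r}{s}}(q)=\cl(Q(a_2-1,a_3,\ldots,a_{2m}-1)^\vee)$; by the \cite{OR} characterization this is palindromic if and only if $a_i=a_{2m+2-i}$ for $2\le i\le 2m$; and by Lemma~\ref{1/r} that symmetry of the continued-fraction expansion is equivalent to $r^2\equiv 1\pmod{s}$. So you identified the correct ingredients (the \cite{OR} theorem and Lemma~\ref{1/r}) but did not carry the argument through, and the route you did write down in detail would not close.
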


\begin{proof}
Let ${\mathbf b}=(b_1, \ldots, b_k)$ be an integer sequence  such that  $b_1, b_k \ge 0$, $b_2, \ldots, b_{k-1}>0$ and $k$ is odd. 
\cite[Theorem~1.3 (c)]{OR}, which was first conjectured in \cite{TBEC21},  states that $\cl(Q({\mathbf b}))$ is palindromic if and only if $b_i=b_{k+1-i}$ for all $1 \le i \le k$.

Set $\frac{r}s=[a_1, \ldots, a_{2m}]$. 
By the above mentioned result, the $q$-polynomial $\cS_{\frac{r}{s}}(q)=\cl(Q(a_2-1, a_3, \ldots, a_{2m}-1)^{\vee})$ is palindromic if and only if 
\begin{equation}\label{symmetric}
a_i =a_{2m+2-i} \quad  \text{for all $2 \le i \le 2m$.}
\end{equation}
By Lemma~\ref{1/r}, the condition \eqref{symmetric} holds if and only if $a^2 \equiv 1 \pmod{p}$. 
\end{proof}

\begin{cor}\label{power of p} The following hold.
\begin{enumerate}
\item[\textrm{(1)}] For an irreducible fraction $\frac{a}{p^n}$ such that $p$ is an odd prime, $\cS_{\frac{a}{p^{n}}}(q)$ is palindromic, if and only if $a \equiv \pm 1 \pmod{p^n}$, if and only if  $\cS_{\frac{a}{p^n}}(q)=[p^n]_q=1+q+\cdots +q^{p^{n}-1}$.  

\item[\textrm{(2)}] For $n \ge 2$, $\cS_{\frac{a}{2^n}}(q)$ is palindromic if and only if $a \equiv \pm 1 \pmod{2^n}$ or $a \equiv 2^{n-1} \pm 1 \pmod{2^n}$. 
\end{enumerate}
\end{cor}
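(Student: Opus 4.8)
The plan is to derive both statements from Theorem~\ref{palindromic}, which converts the palindromicity of $\cS_{\frac{a}{p^n}}(q)$ into the purely number-theoretic condition $a^2 \equiv 1 \pmod{p^n}$ (and likewise with $2^n$ in place of $p^n$). Thus the whole corollary reduces to counting the square roots of $1$ in the unit group $(\ZZ/p^n\ZZ)^\times$, together with, for part (1), identifying the resulting denominator polynomial explicitly.

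For part (1), I would first recall that for an odd prime $p$ the group $(\ZZ/p^n\ZZ)^\times$ is cyclic, of order $\varphi(p^n)=p^{n-1}(p-1)$, which is even because $p$ is odd. In a cyclic group of even order there are exactly two elements fixed by squaring, namely $\pm1$, so $a^2 \equiv 1 \pmod{p^n}$ holds if and only if $a \equiv \pm 1 \pmod{p^n}$. With Theorem~\ref{palindromic} this already gives the equivalence between palindromicity and $a \equiv \pm 1 \pmod{p^n}$. To bring in the third condition, note that $\cS_{\frac{a}{p^n}}(q)=[p^n]_q$ trivially implies palindromicity (every $[m]_q$ is palindromic), so it remains to show that $a \equiv \pm 1 \pmod{p^n}$ forces $\cS_{\frac{a}{p^n}}(q)=[p^n]_q$. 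Since $\cS_\alpha(q)=\cS_{\alpha+1}(q)$, for $a \equiv 1$ we may compute $\cS_{\frac{1}{p^n}}(q)=\cS_{\frac{p^n+1}{p^n}}(q)$; writing $\frac{p^n+1}{p^n}=[1,p^n]$ and applying Theorem~\ref{theorem:MO closure}, this equals $\cl(Q(0,p^n-2))$. The quiver $Q(0,p^n-2)$ is a linearly oriented type-$A$ quiver on $p^n-1$ vertices, whose closures are exactly its initial segments, one of each cardinality $0,1,\dots,p^n-1$; hence its closure polynomial is $[p^n]_q$. The case $a \equiv -1 \pmod{p^n}$ then follows from $1\cdot(-1)\equiv -1 \pmod{p^n}$ and Theorem~\ref{sufficiency}, which give $\cS_{\frac{-1}{p^n}}(q)=\cS_{\frac{1}{p^n}}(q)=[p^n]_q$.

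For part (2) the same reduction applies, but the arithmetic is different because $(\ZZ/2^n\ZZ)^\times$ is not cyclic once $n \ge 3$. I would treat $n=2$ separately: there $(\ZZ/4\ZZ)^\times=\{1,3\}$ and every unit squares to $1$, which matches the stated residues since $\pm1$ and $2^{n-1}\pm1$ both collapse to $\{1,3\}$. For $n \ge 3$ I would use the standard structure $(\ZZ/2^n\ZZ)^\times \cong \ZZ/2\ZZ \times \ZZ/2^{n-2}\ZZ$, which contains exactly four elements of order dividing $2$. A direct check shows that $\pm1$ and $2^{n-1}\pm1$ are solutions of $a^2 \equiv 1 \pmod{2^n}$ and are pairwise distinct modulo $2^n$ for $n \ge 3$; hence they exhaust the square roots of $1$, and Theorem~\ref{palindromic} yields part (2).

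The routine group theory is the bulk of the argument, so the only steps needing real care are the exhaustiveness of the square-root count modulo $2^n$ (the non-cyclic case, together with the degenerate case $n=2$) and the explicit evaluation $\cl(Q(0,p^n-2))=[p^n]_q$. Neither is a serious obstacle; the main point to watch is the consistency of the edge cases, in particular verifying that the four residues $\pm1,\,2^{n-1}\pm1$ genuinely coincide in pairs exactly when $n=2$.
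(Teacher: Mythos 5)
Your proof is correct, and it follows the same skeleton as the paper's: both reduce the palindromicity statement to the congruence $a^2\equiv 1$ via Theorem~\ref{palindromic} and then solve that congruence modulo $p^n$ and $2^n$. The only difference is in how the congruence is solved: the paper argues elementarily that $p^n\mid (a+1)(a-1)$ forces $p^n$ to divide one factor because $p$ cannot divide both (and, for $p=2$, that $4$ cannot divide both), whereas you invoke the structure of the unit groups $(\ZZ/p^n\ZZ)^\times$ (cyclic) and $(\ZZ/2^n\ZZ)^\times\cong \ZZ/2\ZZ\times\ZZ/2^{n-2}\ZZ$ to count square roots of $1$; these are interchangeable, with the paper's version being self-contained and yours importing a standard structure theorem. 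You also spell out the step the paper dismisses as ``clear,'' namely that $a\equiv\pm 1\pmod{p^n}$ forces $\cS_{\frac{a}{p^n}}(q)=[p^n]_q$, by computing $\cS_{\frac{1}{p^n}}(q)=\cl(Q(0,p^n-2))=[p^n]_q$ for the linearly oriented quiver and transporting to $a\equiv -1$ via Theorem~\ref{sufficiency}; this is a welcome addition and is correct (one could equally cite Proposition~\ref{dual1} and the palindromicity of $[p^n]_q$). Your handling of the edge case $n=2$, where the four residues $\pm 1$, $2^{n-1}\pm 1$ collapse to two, is also right.
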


\begin{proof}
(1) The latter equivalence is clear, so we prove the former. 
By Theorem~\ref{palindromic}, it is sufficient to show that $a^2 \equiv 1 \pmod{p^n}$ implies $a \equiv \pm 1 \pmod{p^n}$.  If $a^2 \equiv 1 \pmod{p^n}$, then $p^n$ divides $(a+1)(a-1)$. Since $p$ is an odd prime, $p$ does only divide one of $a+1$ and $a-1$. In fact, if $p$ divides both $a+1$ and $a-1$, then $p$ divides $2$, which is a contradiction. This means that all $n$ copies of $p$ that appear in the prime decomposition of $(a+1)(a-1)$ must come from either $a+1$ or $a-1$. Thus, $p^n$ divides either $a+1$ or $a-1$, equivalently, $a \equiv \pm 1 \pmod{p^n}$. 

(2) Since $4$ cannot divide both $a+1$ and $a-1$ at the same time, an argument similar to the above works.
\end{proof}

Combining the above results with Chinese remainder theorem, for a general  $s$, we can easily detect all $r$ such that $\cS_{\frac{r}s}$ is palindromic (equivalently, $r^2 \equiv 1 \pmod{s}$).

\begin{cor}\label{R: palindromic}
For an irreducible fraction $\frac{r}s >1$, $\cR_{\frac{r}s}(q)$ is palindromic if and only if $s^2 \equiv 1 \pmod{r}$. 
\end{cor}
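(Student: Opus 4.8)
The plan is to convert the question about $\cR_{\frac{r}s}(q)$ into one about a denominator polynomial $\cS_{\bullet}(q)$ and then invoke Theorem~\ref{palindromic}. The natural bridge is Lemma~\ref{wakui-1.4}, which already identifies the $\cR$-polynomial of a fraction greater than $1$ with some $\cS$-polynomial; indeed, the displayed identity $\{\cS_\alpha(q)\mid \alpha\in\QQ\}=\{\cR_\alpha(q)\mid \alpha\in\QQ\cap(1,\infty)\}$ following that lemma records exactly this phenomenon.

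First I would unwind Lemma~\ref{wakui-1.4} in the present situation. Since $\frac{r}s>1$ and $\gcd(r,s)=1$, we have $r>s\ge 1$, so I apply the lemma with its ``$a$'' taken to be $r$. Writing the lemma's ``$x$'' as $x=2r-s=1\cdot r+(r-s)$, the remainder $r-s$ is admissible because $0<r-s<r$, and the size and coprimality hypotheses hold since $x=2r-s\ge r$ and $\gcd(2r-s,r)=\gcd(s,r)=1$. The lemma's second identity then gives $\cS_{\frac{2r-s}{r}}(q)=\cR_{\frac{r}{\,r-(r-s)\,}}(q)=\cR_{\frac{r}{s}}(q)$, so that $\frac{2r-s}{r}$ is an irreducible fraction whose $\cS$-polynomial is precisely $\cR_{\frac{r}{s}}(q)$. (If one prefers a smaller representative, Lemma~\ref{wakui-1.5} shows $\cS$ depends only on the numerator modulo the denominator.)

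Next I would apply Theorem~\ref{palindromic} to the irreducible fraction $\frac{2r-s}{r}$: the polynomial $\cS_{\frac{2r-s}{r}}(q)$ is palindromic if and only if $(2r-s)^2\equiv 1\pmod{r}$. Since $2r-s\equiv -s\pmod r$, we have $(2r-s)^2\equiv s^2\pmod r$, and this congruence reads $s^2\equiv 1\pmod{r}$. Combining with the identification $\cR_{\frac{r}{s}}(q)=\cS_{\frac{2r-s}{r}}(q)$ from the previous step yields the claim.

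There is no genuine obstacle here; the argument is a single translation followed by one invocation of Theorem~\ref{palindromic}. The only point demanding care is the bookkeeping in Lemma~\ref{wakui-1.4}: one must match the roles of numerator and denominator correctly and choose a representative ``$x$'' (here $2r-s$) that simultaneously satisfies the size hypothesis $a\le x$ and lies in the residue class $-s \pmod r$, after which the equality $(2r-s)^2\equiv s^2 \pmod r$ makes the two palindromicity criteria coincide.
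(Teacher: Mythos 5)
Your proof is correct and follows essentially the same route as the paper: both convert $\cR_{\frac{r}s}(q)$ into a denominator polynomial $\cS_{\frac{a}{r}}(q)$ with $a\equiv -s\pmod r$ and then invoke Theorem~\ref{palindromic}. The paper uses the identity $\cR_{\frac{r}s}(q)=\cS_{\frac{r-s}{r}}(q)$ from Lemma~\ref{wakui-1.6}, while you obtain the equivalent identity $\cR_{\frac{r}s}(q)=\cS_{\frac{2r-s}{r}}(q)$ from Lemma~\ref{wakui-1.4}; since $\frac{2r-s}{r}=\frac{r-s}{r}+1$, these are the same statement up to an integer shift, and the remaining congruence manipulation is identical.
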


\begin{proof}
By \eqref{wakui-1.6-2}, we have $\cR_{\frac{r}s}(q)=\cS_{\frac{r-s}r}(q)$. Hence we have 
\begin{eqnarray*}
\text{$\cR_{\frac{r}s}(q)$ is palindromic} &\Longleftrightarrow& \text{$\cS_{\frac{r-s}r}(q)$ is palindromic}\\
&\Longleftrightarrow&  (r-s)^2 \equiv 1 \pmod{r} \\
&\Longleftrightarrow& s^2 \equiv 1 \pmod{r},
\end{eqnarray*}
where the second equivalence follows from Corollary~\ref{palindromic}. 
\end{proof}

\section{Another proof of the sufficiency of the conjecture} \label{sec-4}

\par 
In \cite{Wakui_LDTandNT}, the fourth author introduced the $q$-deformed integers derived from pairs of positive and coprime integers. 
In this section, by using them we give the second proof of the sufficiency of Conjecture~\ref{1-7}. 
To do this we need the following interpretation of the conjecture. 

\begin{lem}\label{KRS-conj.2}
Conjecture~\ref{1-7} is equivalent to the following statement. 
\begin{itemize}
\item[$(*)$] Let $p$ be an odd prime integer. 
For two integers $a, b$ with $1\leq a<b<p$, 
$\mathcal{R}_{\frac{p}{a}}(q)=\mathcal{R}_{\frac{p}{b}}(q)$ if and only if $ab\equiv -1\pmod p$. 
\end{itemize}
\end{lem}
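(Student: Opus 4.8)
The plan is to translate between $\cS_{\frac{a}{p}}(q)$ and $\cR_{\frac{p}{a}}(q)$ by means of the involution $a \mapsto p-a$ on residues modulo $p$, using equation \eqref{wakui-1.6-2}. The key identity I would establish is that, for $1 \le a \le p-1$ with $\gcd(a,p)=1$, applying Lemma~\ref{wakui-1.6} (specifically \eqref{wakui-1.6-2}) with $x=p$ to the fraction $\frac{p-a}{p}$ yields
\[
\cS_{\frac{p-a}{p}}(q) = \cR_{\frac{p}{\,p-(p-a)\,}}(q) = \cR_{\frac{p}{a}}(q),
\]
where the hypotheses $1 \le p-a \le p$ and $\gcd(p-a,p)=1$ of Lemma~\ref{wakui-1.6} hold automatically and $\frac{p}{a}>1$ is irreducible. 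Consequently, for $1 \le a < b < p$ I obtain the equivalence $\cR_{\frac{p}{a}}(q)=\cR_{\frac{p}{b}}(q) \iff \cS_{\frac{p-a}{p}}(q)=\cS_{\frac{p-b}{p}}(q)$, which is the bridge between the two statements.

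Next I would record the two elementary arithmetic facts driving the translation of the congruence conditions: the map $a \mapsto p-a$ is an order-reversing bijection of $\{1,\ldots,p-1\}$ onto itself preserving coprimality to $p$, and $(p-a)(p-b) \equiv ab \pmod{p}$, so that $ab \equiv -1 \pmod{p}$ if and only if $(p-a)(p-b) \equiv -1 \pmod{p}$. Together with the shift invariance $\cS_{\alpha+n}(q)=\cS_{\alpha}(q)$, these allow me to reduce every instance of the Conjecture to representatives in the range $1 \le a,b \le p-1$.

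With this dictionary in hand, the equivalence splits into two implications. For Conjecture~\ref{1-7} $\Rightarrow (*)$, I take $1 \le a<b<p$, set $a'=p-b$ and $b'=p-a$ (distinct elements of $\{1,\ldots,p-1\}$), and use the key identity to rewrite $\cR_{\frac{p}{a}}(q)=\cR_{\frac{p}{b}}(q)$ as $\cS_{\frac{b'}{p}}(q)=\cS_{\frac{a'}{p}}(q)$; since $a'\neq b'$ forces $a'\not\equiv b' \pmod{p}$, Conjecture~\ref{1-7} says this holds exactly when $a'b'\equiv -1 \pmod{p}$, i.e.\ when $ab\equiv -1 \pmod{p}$, which is precisely $(*)$. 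For the converse $(*) \Rightarrow$ Conjecture~\ref{1-7}, given $a,b$ coprime to $p$ I reduce to representatives in $\{1,\ldots,p-1\}$; the case $a\equiv b \pmod p$ is trivially true on both sides, and for distinct representatives I run the same identity in reverse, applying $(*)$ to the pair $p-a,\,p-b$, to conclude $\cS_{\frac{a}{p}}(q)=\cS_{\frac{b}{p}}(q) \iff ab\equiv -1 \pmod{p}$.

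There is no deep obstacle here, since the content is genuinely just this correspondence; the only points demanding care are checking that \eqref{wakui-1.6-2} applies under its stated hypotheses (in particular that all fractions involved are irreducible and the relevant ones exceed $1$), tracking the order reversal induced by $a\mapsto p-a$, and correctly accounting for the redundant clause $a\equiv b \pmod p$ of Conjecture~\ref{1-7}, which has no counterpart in $(*)$ precisely because $(*)$ already restricts attention to $a<b<p$.
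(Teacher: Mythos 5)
Your proposal is correct and follows essentially the same route as the paper, whose proof simply cites Lemma~\ref{wakui-1.5} (shift invariance of $\mathcal{S}$) and Lemma~\ref{wakui-1.6} (the identity $\mathcal{S}_{\frac{a}{x}}(q)=\mathcal{R}_{\frac{x}{x-a}}(q)$); you have merely written out the details the authors leave implicit. The dictionary $\mathcal{S}_{\frac{p-a}{p}}(q)=\mathcal{R}_{\frac{p}{a}}(q)$, the congruence $(p-a)(p-b)\equiv ab \pmod p$, and the handling of the redundant clause $a\equiv b\pmod p$ are all checked correctly.
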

\begin{proof}
This follows from Lemmas \ref{wakui-1.5} and \ref{wakui-1.6}. 
\end{proof}

\begin{dfn}[{\cite[Definition 4.3]{Wakui_LDTandNT}}]\label{2-1}
For a pair $(a, b)$ of positive and coprime integers we define a polynomial $(a, b)_q$ in $q$ with integer coefficients by 
\begin{equation}\label{eq2-1}
(a, b)_q
:=\begin{cases}
(a-r, r)_q+q(a, b-a)_q & \text{if $a<b$},\\ 
(a-b, b)_q+q^{\lceil \frac{a}{b}\rceil }(r, b-r)_q & \text{if $a>b$}, 
\end{cases}
\end{equation}
where $r$ is the remainder when $b$ is divided by $a$ in case where $a<b$, and 
when $a$ is divided by $b$ in case where $a>b$, 
and also $(1, n)_q=(n, 1)_q:=[1+n]_q$ for any non-negative integer $n$. 
\end{dfn}

The polynomial $(a, b)_q$ is convenient to compute $[\alpha \sharp \beta ]_q$. 

\begin{thm}[{\cite[Theorem 4.4]{Wakui_LDTandNT}}]\label{2-3}
If $\alpha =\frac{x}{a},\ \beta =\frac{y}{b}\geq 1$ are Farey neighbors, then 
$$
\mathcal{S}_{\alpha \sharp \beta }(q)=(a, b)_q,\quad \mathcal{R}_{\alpha \sharp \beta }(q)=(x, y)_q. $$
Thus,  we have 
$$[\alpha \sharp \beta ]_q=\dfrac{(x,y)_q}{(a, b)_q}.$$ 
\end{thm}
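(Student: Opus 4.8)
The plan is to prove the two identities $\cS_{\alpha\sharp\beta}(q)=(a,b)_q$ and $\cR_{\alpha\sharp\beta}(q)=(x,y)_q$ \emph{simultaneously}, by induction on $a+b$, by checking that the left-hand sides obey exactly the recursion \eqref{eq2-1} and the base values that define $(a,b)_q$ and $(x,y)_q$. Throughout I may assume $\alpha=\tfrac{x}{a}<\beta=\tfrac{y}{b}$ (so $ay-bx=1$), and since both parents are finite and $\geq 1$, the Farey sum $\gamma:=\alpha\sharp\beta=\tfrac{x+y}{a+b}$ lies in $(1,\infty)$ and is not an integer. A short computation from $ay-bx=1$ shows that, away from the base cases, $a<b\iff x<y$ and $a>b\iff x>y$, so the $\cS$-recursion (branching on $a$ versus $b$) and the $\cR$-recursion (branching on $x$ versus $y$) fire in the same case and can be handled in parallel. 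For the base cases $a=1$ or $b=1$, one parent is an integer; then $\gamma=\alpha+\tfrac1{b+1}$ (resp.\ $\gamma=\beta-\tfrac1{a+1}$), and Lemmas~\ref{wakui-1.5} and~\ref{wakui-1.6} give $\cS_\gamma=[b+1]_q=(1,b)_q$ (resp.\ $[a+1]_q=(a,1)_q$), matching $(1,n)_q=(n,1)_q=[1+n]_q$; the $\cR$ computation is identical.

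For the inductive step I would invoke the $q$-deformed Farey-sum formula \eqref{q-deformed farey sum} of \cite[Theorem 3]{MO}: writing $\gamma=[[c_1,\ldots,c_l]]$, it yields, at the level of the matrices $M_q^-$ underlying that theorem, the termwise identities
\[
\cR_\gamma(q)=\cR_\alpha(q)+q^{c_l-1}\cR_\beta(q),\qquad \cS_\gamma(q)=\cS_\alpha(q)+q^{c_l-1}\cS_\beta(q).
\]
In the inductive step $a,b\geq 2$, so $\alpha,\beta$ are non-integers $>1$ whose Farey parents are all finite and $\geq 1$; moreover the parents of $\alpha$ have denominators summing to $a<a+b$ and those of $\beta$ sum to $b<a+b$. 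Hence the inductive hypothesis applies to $\alpha$ and to $\beta$ and rewrites each of $\cS_\alpha,\cS_\beta,\cR_\alpha,\cR_\beta$ as the symbol $(\,\cdot\,,\cdot)_q$ attached to the denominator (resp.\ numerator) pair of the parents of $\alpha$ and of $\beta$. Everything thus reduces to two combinatorial points about the Stern--Brocot tree.

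The first point is to match the parents' denominators and numerators with the arguments in \eqref{eq2-1}. Writing the parents of $\alpha$ as $\tfrac{x_L}{a_L}<\tfrac{x}{a}<\tfrac{x_R}{a_R}$, the unimodular relations $a_Lx-ax_L=1$ and $ax_R-a_Rx=1$ give, modulo $a$, that $a_R\equiv -x^{-1}\pmod a$, while the neighbor relation $ay-bx=1$ gives $b\equiv -x^{-1}\pmod a$; hence $a_R=b\bmod a=r$ and $a_L=a-r$, i.e.\ the pair $(a-r,r)$ of case $a<b$. Likewise, when $a<b$ the parents of the younger neighbor $\beta$ (via Lemma~\ref{Farey neighbors fandamentals}) are $\alpha$ and $\tfrac{y-x}{b-a}$, giving the denominator pair $(a,b-a)$; the numerators behave identically, producing $(x-\rho,\rho)$ with $\rho=y\bmod x$ and $(x,y-x)$. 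The case $a>b$ is the mirror image, with $\alpha$ now the younger neighbor, producing $(a-b,b)$ and $(r,b-r)$.

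The second point, which I expect to be the main obstacle, is pinning down the exponent $c_l-1$: one must show $c_l=2$ when $a<b$ and $c_l=\lceil a/b\rceil+1$ when $a>b$, so that $q^{c_l-1}$ equals the factor $q$ (resp.\ $q^{\lceil a/b\rceil}$) of \eqref{eq2-1}. Here the approach is to use the matrix factorization in Lemma~\ref{MO19 prop.3.1}: truncating the last entry of the negative continued fraction of $\gamma$ realizes its right Farey parent, so $[[c_1,\ldots,c_{l-1}]]=\beta$ and $[[c_1,\ldots,c_{l-2}]]$ is the right parent of $\beta$. Feeding this into the convergent recurrence $s=c_l\,s''-s'''$, where $s=a+b$, $s''=b$, and $s'''$ is the denominator of the right parent of $\beta$, gives $c_l=(a+b+s''')/b$; and the denominator identifications above show $s'''=b-a$ when $a<b$ (yielding $c_l=2$) and $s'''=b-r$ when $a>b$ (the latter from $a\equiv y^{-1}\pmod b$, yielding $c_l=\lceil a/b\rceil+1$). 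Substituting these exponents and the parent identifications into the two displayed identities reproduces \eqref{eq2-1} verbatim, closing the induction; the final statement $[\alpha\sharp\beta]_q=\tfrac{(x,y)_q}{(a,b)_q}$ then follows from the definition $[\gamma]_q=\cR_\gamma/\cS_\gamma$. The delicate bookkeeping is entirely in keeping the left/right parent ordering and the two cases straight, and in extracting $c_l$ correctly from the negative continued fraction.
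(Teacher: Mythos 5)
The paper does not actually prove this statement: Theorem~\ref{2-3} is imported verbatim from \cite[Theorem 4.4]{Wakui_LDTandNT}, so there is no in-paper argument to compare yours against. Judged on its own, your induction on $a+b$ is sound and the two pieces of Stern--Brocot bookkeeping you identify as the crux both check out: from $ay-bx=1$ one does get that the right parent of $\alpha$ has denominator $b\bmod a=r$ (hence left parent denominator $a-r$), that the parents of $\beta$ are $\alpha$ and $\tfrac{y-x}{b-a}$ when $a<b$ (resp.\ the mirror statements when $a>b$), and the convergent recurrence $a+b=c_l\,b-s'''$ with $s'''=b-a$ (resp.\ $b-r$) gives $c_l=2$ (resp.\ $\lceil a/b\rceil+1$), exactly matching the weights in \eqref{eq2-1}; the equivalence $a<b\iff x<y$ away from the base cases keeps the two recursions synchronized.

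Two points deserve tightening. First, your inductive step needs the \emph{termwise} Farey-sum identities $\cR_\gamma=\cR_\alpha+q^{c_l-1}\cR_\beta$ and $\cS_\gamma=\cS_\alpha+q^{c_l-1}\cS_\beta$, which are strictly stronger than the fraction-level equation \eqref{q-deformed farey sum} displayed in this paper: the fraction identity together with coprimality of $\cR_\gamma,\cS_\gamma$ only pins down the right-hand sides up to a common polynomial factor (evaluating at $q=1$ gives that factor value $1$ at $q=1$, which is not enough). You correctly sense this and appeal to the matrices $M_q^-$; to make it airtight you should either cite \cite[Theorem 3]{MO} in its original matrix/termwise form or rederive the termwise version from Proposition~\ref{MO20 prop. 4.3 and 4.9} and the truncation identification $[[c_1,\ldots,c_{l-1}]]=\beta$. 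Second, the base case for $\cR$ is not literally ``identical'' to that for $\cS$: with $a=1$, $y=bx+1$, one must unwind the definition to get $(x,bx+1)_q=[x]_q[b+1]_q+q^{x+b}$ and match it against $q^{x}\cR_{\frac{1}{b+1}}(q)+[x]_q\cS_{\frac{1}{b+1}}(q)$ using $\cR_{\frac{1}{b+1}}(q)=q^{b}$ and $\cS_{\frac{1}{b+1}}(q)=[b+1]_q$; this works, but it is a short computation rather than a repetition. With those two items written out, the proof is complete.
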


Any rational number $\alpha >0$ is associated with a link $L(\alpha )$ in the $3$-sphere $\mathbb{S}^3$ which is given by the diagram $D(\alpha )$ below, and such a link is called a \textit{rational link} or \textit{two-bridge link}. 
If $\alpha $ belongs to the open interval $(0,1)$, then the diagram $D(\alpha )$ is given as in Figure~\ref{fig1} after the expression of $\alpha =[0, a_1, \ldots , a_n]$ with odd $n$. 

\begin{figure}[ht]
    \centering
    \includegraphics[height=2cm, width=13cm]{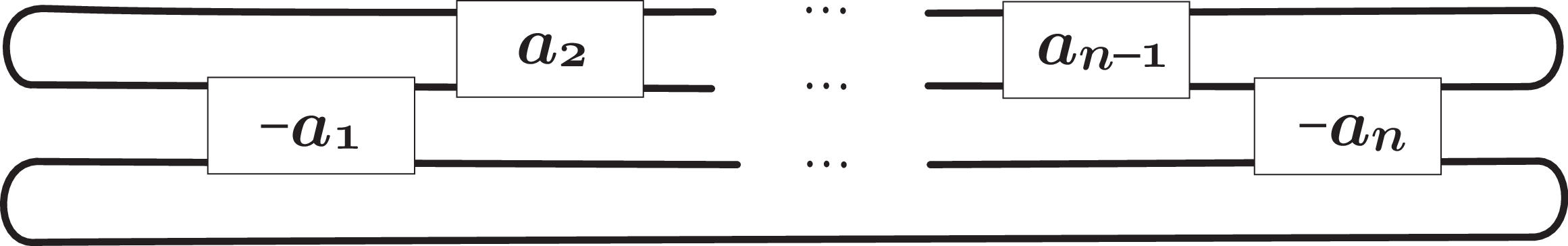}
    \caption{the diagram $D( \alpha )$ of rational link}\label{fig1}
\end{figure}
\noindent where
$$\raisebox{-0.2cm}{\includegraphics[height=0.6cm]{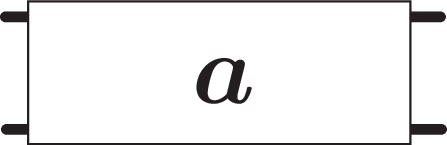}}\ \ =\ 
\begin{cases}
 \raisebox{-0.5cm}{\includegraphics[height=1.6cm]{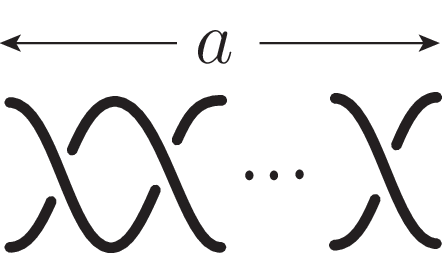}}\ & \raisebox{-0.2cm}{if $a\geq 0$,}\\[0.5cm]  
\raisebox{-0.5cm}{\includegraphics[height=1.6cm]{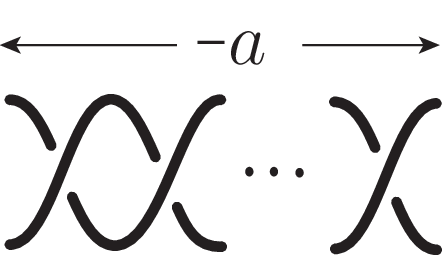}}\ & \raisebox{-0.2cm}{if $a< 0$.}
\end{cases} $$
If $\alpha >1$, then $D(\alpha )$ is defined by $D(\alpha ):=D(\alpha ^{-1})$, and 
if $\alpha =1$, then $D(\alpha )=~\raisebox{-0.3cm}{\includegraphics[height=0.8cm]{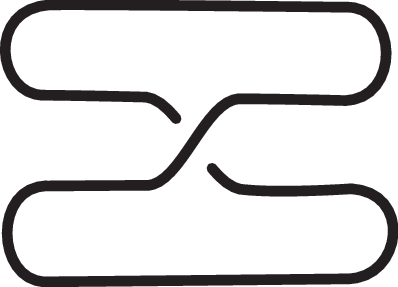}}$.  
For a negative rational number $\alpha$, a rational link $L(\alpha )$ and its diagram $D(\alpha )$ are defined in the same way as the positive case. 
Then we see that the link $L(\alpha )$ is the mirror image of $L(-\alpha )$. 
However, for any $\alpha \in \QQ$, there is some $\beta \in \QQ \cap(1, \infty)$ such that $L(\alpha)$ and $L(\beta)$ are isotopy. See, for example, \cite[Theorem~2]{KL}. In this sense, we may assume that $\alpha >1$. 

\par 
As a useful isotopy invariant for an oriented link $L$ in $\mathbb{S}^3$, the Jones polynomial $V_{L}(t)$ \cite{Jones, Kau-Topology}, which is valued in $\mathbb{Z}[t^{\pm \frac{1}{2}}]$, is well-studied. 
Lee and Schiffler~\cite{LeeSchiffler} introduced the following normalization $J_{\alpha }(q)$ of the Jones polynomial $V_{\alpha}(t):=V_{L(\alpha )}(t)$ of a rational link $L(\alpha )$: 
\begin{equation}\label{def normalized Jones}
 J_{\alpha }(q):=\pm t^{-h}V_{\alpha }(t)|_{t=-q^{-1}},
\end{equation}
where $\pm t^{h}$ is the leading term of $V_{\alpha }(t)$. 
This indicates the normalization such that the constant term is $1$ as a polynomial in $q$. 
We note that 
\[ J_1(q)=1,\quad  J_{\infty}(q)=q. \]

By Lee and Schiffler~\cite{LeeSchiffler}, it is known that the Jones polynomial $V_{\alpha }(t)$ can be recovered from $J_{\alpha }(q)$. 
By \cite[Proposition A.1]{MO} and the equation \eqref{q-deformed farey sum}, we see that, for a rational number $\alpha >1$,  the normalized Jones polynomial $J_{\alpha}(q)$ can be computed by 
\begin{equation}\label{eq2-6}
J_{\alpha }(q)=q\mathcal{R}_{\alpha }(q)+(1-q)\mathcal{S}_{\alpha }(q).  
\end{equation} 
Using this formula, the fourth author showed the following.

\begin{thm}[{\cite[Theorem 5.3]{Wakui_LDTandNT}}]\label{2-5}
Let $(a, p)$ be a pair of coprime integers with $1\leq a<p$. Then 
\begin{equation}\label{eq2-8}
(a, p)_q=J_{\frac{p}{a}}(q)+q(a-r, r)_q, 
\end{equation}
where $r$ is the remainder when $p$ is divided by $a$. 
\end{thm}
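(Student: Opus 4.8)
The plan is to translate both sides of \eqref{eq2-8} into the language of $\mathcal{R}_\alpha(q)$ and $\mathcal{S}_\alpha(q)$ and then collapse everything with the shift Lemma~\ref{wakui-1.5} and the Jones formula \eqref{eq2-6}. The crucial bridge is the identity
\begin{equation*}
(u,v)_q=\mathcal{R}_{\frac{u+v}{u}}(q)\qquad\text{for all coprime positive integers } u,v,
\end{equation*}
which I will establish separately. Granting it, with $r$ the remainder of $p$ divided by $a$, the theorem is immediate: taking $(u,v)=(a,p)$ gives $(a,p)_q=\mathcal{R}_{\frac{a+p}{a}}(q)$, and taking $(u,v)=(a-r,r)$ gives $(a-r,r)_q=\mathcal{R}_{\frac{a}{a-r}}(q)$, which equals $\mathcal{S}_{\frac{p}{a}}(q)$ by the second equation of Lemma~\ref{wakui-1.4} (with $x=p$). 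Since $\frac{a+p}{a}=\frac{p}{a}+1$, Lemma~\ref{wakui-1.5} with $n=1$ gives $\mathcal{R}_{\frac{a+p}{a}}(q)=q\mathcal{R}_{\frac{p}{a}}(q)+\mathcal{S}_{\frac{p}{a}}(q)$. Combining these with \eqref{eq2-6},
\begin{align*}
J_{\frac{p}{a}}(q)+q(a-r,r)_q
&=\bigl(q\mathcal{R}_{\frac{p}{a}}(q)+(1-q)\mathcal{S}_{\frac{p}{a}}(q)\bigr)+q\mathcal{S}_{\frac{p}{a}}(q)\\
&=q\mathcal{R}_{\frac{p}{a}}(q)+\mathcal{S}_{\frac{p}{a}}(q)=\mathcal{R}_{\frac{a+p}{a}}(q)=(a,p)_q,
\end{align*}
which is exactly \eqref{eq2-8}.

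To prove the bridge identity I would induct on $u+v$, showing that $F(u,v):=\mathcal{R}_{\frac{u+v}{u}}(q)$ satisfies the base cases and the recursion of Definition~\ref{2-1}. The base cases $F(1,n)=\mathcal{R}_{n+1}(q)=[n+1]_q$ and $F(n,1)=\mathcal{R}_{\frac{n+1}{n}}(q)=[n+1]_q$ are routine (for the latter, $\frac{n+1}{n}=[[2,\ldots,2]]$ with $n$ entries, or a direct product of $M_q^-(2)$'s). For the step $u<v$ (so $r=v \bmod u$), one has $F(u-r,r)=\mathcal{R}_{\frac{u}{u-r}}(q)$ and $F(u,v-u)=\mathcal{R}_{\frac{v}{u}}(q)$; since $\frac{u+v}{u}=\frac{v}{u}+1$, Lemma~\ref{wakui-1.5} gives $F(u,v)=q\mathcal{R}_{\frac{v}{u}}(q)+\mathcal{S}_{\frac{v}{u}}(q)$, and Lemma~\ref{wakui-1.4} identifies $\mathcal{S}_{\frac{v}{u}}(q)=\mathcal{R}_{\frac{u}{u-r}}(q)$, so $F(u,v)=F(u-r,r)+qF(u,v-u)$, matching the first line of \eqref{eq2-1}.

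The step $u>v$ is the main obstacle. Here $r=u \bmod v$, and one must verify
\begin{equation*}
\mathcal{R}_{\frac{u+v}{u}}(q)=\mathcal{R}_{\frac{u}{u-v}}(q)+q^{\lceil u/v\rceil}\mathcal{R}_{\frac{v}{r}}(q),
\end{equation*}
which corresponds to the second line of \eqref{eq2-1}. Applying Lemma~\ref{wakui-1.5} to $\frac{u+v}{u}=\frac{v}{u}+1$ and Lemma~\ref{wakui-1.6} to $\frac{v}{u}$ (legitimate since $v<u$), one finds $\mathcal{R}_{\frac{u+v}{u}}(q)-\mathcal{R}_{\frac{u}{u-v}}(q)=q\,\mathcal{R}_{\frac{v}{u}}(q)$, so the identity reduces to $\mathcal{R}_{\frac{v}{u}}(q)=q^{\lceil u/v\rceil-1}\mathcal{R}_{\frac{v}{r}}(q)$. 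This last equation follows by iterating the clean shift identity
\begin{equation*}
\mathcal{R}_{\frac{v}{w+v}}(q)=q\,\mathcal{R}_{\frac{v}{w}}(q)\qquad(w\ge 1),
\end{equation*}
which drops out of Lemma~\ref{wakui-1.6} applied to $\frac{v}{w+v}$ together with Lemma~\ref{wakui-1.5} applied to $\frac{w+v}{w}=\frac{v}{w}+1$, the two copies of $\mathcal{S}_{\frac{v}{w}}(q)$ cancelling. Writing $u=(\lceil u/v\rceil-1)v+r$ and applying the shift $\lceil u/v\rceil-1$ times yields the required power of $q$, completing the induction. Throughout I would note that every pair produced by \eqref{eq2-1} remains coprime and has strictly smaller sum of entries, so the induction is well founded, and that the degenerate case $a=1$ (where $r=0$) is absorbed by the same computation.
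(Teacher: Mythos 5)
Your argument is correct, but it cannot be compared line-by-line with a proof in this paper, because the paper gives none: Theorem~\ref{2-5} is imported verbatim from \cite{Wakui_LDTandNT} (as is Theorem~\ref{2-6}, which is presented there as an \emph{application} of Theorem~\ref{2-5}). What you have done is reverse that logical order: your ``bridge identity'' $(u,v)_q=\mathcal{R}_{\frac{u+v}{u}}(q)$ is exactly equation \eqref{eq2-9} of Theorem~\ref{2-6} after the substitution $p=u+v$, $a=u$, and you prove it directly from Definition~\ref{2-1} by induction on $u+v$, then deduce \eqref{eq2-8} by the three-line computation with Lemmas~\ref{wakui-1.4}, \ref{wakui-1.5} and the Jones formula \eqref{eq2-6}. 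I checked the pieces: the $u<v$ step is precisely Lemma~\ref{wakui-1.4} (with $x=v$, $a=u$) combined with the shift $\frac{u+v}{u}=\frac{v}{u}+1$; the $u>v$ step correctly reduces, via Lemmas~\ref{wakui-1.5} and \ref{wakui-1.6}, to the identity $\mathcal{R}_{\frac{v}{w+v}}(q)=q\,\mathcal{R}_{\frac{v}{w}}(q)$, whose derivation (the two copies of $\mathcal{S}_{\frac{v}{w}}(q)$ cancelling) is valid, and iterating it $\lceil u/v\rceil-1$ times produces the right power of $q$ since $u=(\lceil u/v\rceil-1)v+r$ when $v\nmid u$ (and $v\mid u$ forces $v=1$, a base case); the well-foundedness and coprimality bookkeeping is also right, including the degenerate $(1,0)_q=1$ needed when $a=1$. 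The net effect is a self-contained proof of both Theorems~\ref{2-5} and \ref{2-6} from the Section~\ref{sec-2} lemmas, which is arguably a cleaner packaging than citing \cite{Wakui_LDTandNT} for both; the only caveat is that your write-up should state explicitly that the bridge identity \emph{is} \eqref{eq2-9}, so a reader does not mistake it for an additional unproved import.
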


The equation \eqref{eq2-8} corresponds to the equation 
$J_{\frac{r}{s}}(q)=\mathcal{R}^{\prime}-q\mathcal{S}^{\prime}$ in {\cite[p.45]{MO}} under the setting $\frac{r}{s}=\frac{p}{a}$. 
{In fact, since 
$\mathcal{S}^{\prime}=\mathcal{S}_{\frac{p}{a}}(q)$ and 
$\mathcal{R}^{\prime}=q\mathcal{R}_{\frac{p}{a}}(q)+\mathcal{S}_{\frac{p}{a}}(q)$ as shown in {\cite[p.45]{MO}},}
we have 
$\mathcal{S}^{\prime}=(a-r, r)_q$ by \eqref{eq2-10} below and 
$\mathcal{R}^{\prime}=q(a, p-a)_q+(a-r, r)_q=(a, p)_q$ by \eqref{eq2-1} and \eqref{eq2-9} below. 
\par

As an application of Theorem~\ref{2-5} we have: 

\begin{thm}[{\cite[Theorem 5.4]{Wakui_LDTandNT}}]\label{2-6}
Let $(a, p)$ be a pair of coprime integers with $1\leq a\leq p$, and $r$  the remainder when $p$ is divided by $a$. Then, the following equations hold.
\begin{align}
\mathcal{R}_{\frac{p}{a}}(q) &=(a, p-a)_q,  \label{eq2-9}\\ 
\mathcal{S}_{\frac{p}{a}}(q)  &=(a-r, r)_q. \label{eq2-10}
\end{align}
\end{thm}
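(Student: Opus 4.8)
The plan is to fuse three facts that are already available---Theorem~\ref{2-5}, the defining recursion \eqref{eq2-1}, and the normalized Jones formula \eqref{eq2-6}---into one polynomial identity, and then to disentangle the two asserted equations by an induction on $p$.

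First I would derive a master identity. Write $r$ for the remainder of $p$ divided by $a$, and assume $1\le a<p$ (the case $a=p$ forces $a=p=1$ and is checked directly). Since $a<p$, the first branch of \eqref{eq2-1} reads $(a,p)_q=(a-r,r)_q+q\,(a,p-a)_q$, whereas Theorem~\ref{2-5} gives $(a,p)_q=J_{\frac{p}{a}}(q)+q\,(a-r,r)_q$. Equating these, solving for $J_{\frac{p}{a}}(q)$, and substituting \eqref{eq2-6} yields
\begin{equation}
q\,\mathcal{R}_{\frac{p}{a}}(q)+(1-q)\,\mathcal{S}_{\frac{p}{a}}(q)=q\,(a,p-a)_q+(1-q)\,(a-r,r)_q. \tag{$\star$}
\end{equation}
This identity $(\star)$ is the crux of the argument; note that it already \emph{contains} both target equations, but on its own it cannot separate them, since a single scalar relation cannot determine two unknown polynomials.

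To perform the separation I would run a simultaneous induction on $p$, establishing \eqref{eq2-10} first and then feeding it into $(\star)$ to obtain \eqref{eq2-9}. For \eqref{eq2-10}, Lemma~\ref{wakui-1.4} with $x=p$ gives $\mathcal{S}_{\frac{p}{a}}(q)=\mathcal{R}_{\frac{a}{a-r}}(q)$. Since $\gcd(a,a-r)=\gcd(a,r)=\gcd(a,p)=1$ and $1\le a-r\le a<p$, the fraction $\frac{a}{a-r}$ is a strictly smaller instance, so the inductive hypothesis for \eqref{eq2-9} applied to the pair $(a-r,a)$ gives $\mathcal{R}_{\frac{a}{a-r}}(q)=(a-r,\,a-(a-r))_q=(a-r,r)_q$; this is precisely \eqref{eq2-10} for $\frac{p}{a}$. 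With \eqref{eq2-10} secured, the $(1-q)$-terms on both sides of $(\star)$ cancel, leaving $q\,\mathcal{R}_{\frac{p}{a}}(q)=q\,(a,p-a)_q$, and since $\mathbb{Z}[q]$ is an integral domain this gives $\mathcal{R}_{\frac{p}{a}}(q)=(a,p-a)_q$, which is \eqref{eq2-9}.

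It remains to treat the base and degenerate cases: $p=1$ (whence $a=1$), and $r=0$ (whence $a=1$ and $\frac{a}{a-r}=1$), where $\mathcal{R}_1(q)=\mathcal{S}_1(q)=1=[1]_q=(1,0)_q$ as recorded in the proof of Lemma~\ref{wakui-1.4}. The main obstacle here is conceptual rather than computational: the identity $(\star)$ is underdetermined, so the whole proof turns on producing \eqref{eq2-10} through the independent channel of Lemma~\ref{wakui-1.4} so that it can be reinserted to unlock \eqref{eq2-9}. The one point requiring care is that the reduction $\frac{p}{a}\rightsquigarrow\frac{a}{a-r}$ swaps the roles of \eqref{eq2-9} and \eqref{eq2-10}, so the two equations must travel together through the induction, with the chain $1\le a-r\le a<p$ ensuring termination.
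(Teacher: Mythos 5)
The paper itself gives no proof of Theorem~\ref{2-6}: it is imported from \cite{Wakui_LDTandNT} with only the remark that it is ``an application of Theorem~\ref{2-5}''. Your reconstruction is correct and consistent with that intent: comparing Theorem~\ref{2-5} with the first branch of the recursion \eqref{eq2-1} does yield your identity $(\star)$, and your key observation --- that $(\star)$ alone is underdetermined, so \eqref{eq2-10} must be supplied independently via Lemma~\ref{wakui-1.4} together with an induction on $p$ that invokes \eqref{eq2-9} for the smaller coprime pair $(a-r,a)$ --- correctly closes the circle, with the degenerate cases $a=p=1$ and $r=0$ handled as you describe.
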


By using the above theorem, one can give another proof of Theorem~\ref{sufficiency}, that is, the sufficiency of Conjecture~\ref{1-7}.

\begin{proof}[Another proof of Theorem~\ref{sufficiency}.]
 Since $ab \equiv -1 \pmod p$, there is some $m\in\ZZ$ with $mp-ab=1$. Since $1\leq a\leq b<p$, we have $0\leq m<a$. Thus, $\left(\frac{a}{m},\frac{p-a}{b-m}\right)$ is the Farey parents of $\frac{p}{b}$. By using Theorem~\ref{2-3}, we have 
 $$\mathcal{R}_{\frac{p}{b}}(q)=(a, p-a)_q.$$
(See also the proof of \cite[Theorem 3.2]{Kogiso-Wakui2}.) 
Combining this equation with \eqref{eq2-9}, we get $\mathcal{R}_{\frac{p}{a}}(q)=\mathcal{R}_{\frac{p}{b}}(q)$. 
\end{proof} 

\begin{rem}
By Corollary~\ref{R: palindromic} and \eqref{eq2-9}, $(a,b)_q$ is palindromic if and only if $a^2 \equiv 1 \pmod{a+b}$. Combining this observation with Theorem~\ref{2-3}, one can show the following.  For a positive irreducible fractions $\frac{c}z$ whose Farey parent is $(\frac{a}x, \frac{b}y)$, $\cS_{\frac{c}{z}}(q)$ is palindromic, 
if and only of $x^2 \equiv 1 \pmod{x+y}$ (equivalently, $y^2 \equiv 1 \pmod{x+y}$).      
\end{rem}

\section{Three operations on the positive rational numbers and
\texorpdfstring{$q$}{q}-deformed rational numbers} \label{sec-5}

{In the study of Conway-Coxeter friezes of zigzag-type developed by the first and the fourth authors \cite{Kogiso-Wakui_Proc, Kogiso-Wakui}
crucial three operators $\mathfrak{i}, \mathfrak{r}, \mathfrak{ir}$ on the positive rational numbers are introduced. 
In this section we examine effect of the operators $\mathfrak{i}, \mathfrak{r}, \mathfrak{ir}$ on $\mathcal{R}_{\alpha}(q), \mathcal{S}_{\alpha}(q)$.}

Let $\alpha =\frac{z}{c} >0$ be an irreducible fraction.
In the case where $\alpha \in (0, 1)$, irreducible fractions $\mathfrak{i}(\alpha ), \mathfrak{r}(\alpha ), (\mathfrak{ir})(\alpha )$ in the interval $(0,1)$ are defined as follows \cite{Kogiso-Wakui}:
\begin{equation}\label{eq3-1}
\mathfrak{i}(\alpha ):=\dfrac{c-z}{c}\ (=1-\alpha ),\quad 
\mathfrak{r}(\alpha ):=\dfrac{a}{c},\quad 
\mathfrak{ir}(\alpha ):=\dfrac{b}{c},  
\end{equation} 
where $(\frac{x}{a}, \frac{y}{b})$ is the Farey parent of $\alpha$. 
Note that
\begin{equation}\label{r and i}
az \equiv 1 \pmod{c} \quad \text{and} \quad  b\equiv-a \pmod{c}.
\end{equation}
In fact, since $z=x+y$, $c=a+b$ and $ay-bx=1$ now, we have 
$$az=a(x+y)=ax+ay=ax+1+bx=1+(a+b)x=1+cx.$$
Hence as operations on $\QQ \cap (0,1)$, we have $\mathfrak{i}^2=\mathfrak{r}^2=\operatorname{id}$ and $\mathfrak{ir}=\mathfrak{ri}$. By Theorem~\ref{palindromic}, for $\alpha \in \QQ \cap (0,1)$, $\mathfrak{r}(\alpha)=\alpha$ if and only if $\cS_\alpha(q)$ is palindromic.

In the case where $\alpha >1$, $\mathfrak{i}(\alpha ), \mathfrak{r}(\alpha )$, and $\mathfrak{ir}(\alpha )$ are defined as follows: 
\begin{equation}\label{eq3-3}
\mathfrak{i}(\alpha):=\left( \mathfrak{i}(\alpha^{-1} )\right) ^{-1},\qquad 
\mathfrak{r}(\alpha):=\left( \mathfrak{r}(\alpha^{-1} )\right) ^{-1},\qquad 
(\mathfrak{ir})(\alpha):=\left( (\mathfrak{ir})(\alpha^{-1})\right) ^{-1}. 
\end{equation}
Here we also have $\mathfrak{i}^2=\mathfrak{r}^2=\operatorname{id}$ and $\mathfrak{ir}=\mathfrak{ri}$. Moreover, $\mathfrak{r}(\alpha)=\alpha$ if and only if $\cR_\alpha(q)$ is palindromic. 

By Lemmas \ref{-r} and \ref{1/r}, and the equation \eqref{r and i}, one can show that for a positive rational number $\alpha =[0, a_2, \ldots , a_n]$, 
\begin{align}
(\mathfrak{ir})(\alpha )&={\begin{cases}
[0, a_n, \ldots , a_3, a_2] & \text{if $n$ is odd},  \\ 
[0, 1, a_n-1, a_{n-1}, \ldots , a_3, a_2] & \text{if $n$ is even and $a_n \ge 2$},  \\
[0, a_{n-1}+1, a_{n-2}, \ldots , a_3, a_2] & \text{if $n$ is even and $a_n =1$,}
\end{cases}} \label{eq3-4}\\[0.1cm]  
\mathfrak{i}(\alpha )&={\begin{cases}[0, 1, a_2-1, a_3, \ldots , a_n] & \text{if $a_2 \geq 2$}, \\
[0, a_3+1, a_4, \ldots , a_n] & \text{if $a_2=1$},  
\end{cases}}
\label{eq3-5}\\[0.1cm]  
\mathfrak{r}(\alpha )&={\begin{cases}
[0, 1, a_n-1, a_{n-1}, \ldots , a_3, a_2] & \text{if $n$ is odd and $a_n \ge 2$},\\ 
[0, a_{n-1}+1, a_{n-2}, \ldots , a_3, a_2] & \text{if $n$ is odd and $a_n =1$,}\\
[0, a_n, \ldots , a_3, a_2]& \text{if $n$ is even}. 
\end{cases}} \label{eq3-6}
\end{align}

\begin{lem}\label{farey sums and continued fractions}
Let $\alpha \in \mathbb{Q}\cap (0, \infty )$ whose expression as a regular continued fraction  is $\alpha =[ a_1, a_2, \ldots , a_n]$. 
If $n$ is odd, then 
\begin{align*}
\beta &={\begin{cases}
 [ a_1, \ldots , a_{n-1}, a_n-1] & \text{if $a_n\geq 2$},\\ 
 [ a_1, \ldots , a_{n-2}] & \text{if $a_n=1$ and $n\geq3$},\\
[0 ] &\text{if $a_1=1$ and  $n=1$,}
\end{cases}}\\ 
\gamma &={\begin{cases}
[ a_1, \ldots , a_{n-1}] & \text{if $n\geq3$,}\\
[\quad ] & \text{if $n=1$}
\end{cases}}
\end{align*}
are Farey neighbors and $\alpha =\beta \sharp \gamma$, where $[\quad]$ expresses $\infty=\frac{1}0$.
If $n$ is even, then 
\begin{align*}
\beta &=[ a_1, \ldots , a_{n-1}] \\
\gamma &={\begin{cases}
[ a_1, \ldots , a_{n-1}, a_n-1] & \text{if $a_n\geq 2$},\\ 
[ a_1, \ldots , a_{n-2}] & \text{if $a_n=1$ and $n\geq4$,}\\
[\quad ] & \text{if $a_2=1$ and $n=2$}
\end{cases}}
\end{align*}
are Farey neighbors and $\alpha =\beta \sharp \gamma $. 
\end{lem}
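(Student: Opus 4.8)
The plan is to pass to the standard matrix model of continued fractions, in which the statement becomes an elementary determinant computation. Writing $\frac{p_k}{q_k} := [a_1,\ldots,a_k]$ for the convergents of $\alpha$, a one-line induction on $k$ (the base case being the empty product, the identity matrix) upgrades Lemma~\ref{MO19 prop.3.1} from even length to all lengths:
\[
M(a_1)\cdots M(a_k) = \begin{pmatrix} p_k & p_{k-1} \\ q_k & q_{k-1} \end{pmatrix},
\]
where the extreme columns force the conventions $\frac{p_0}{q_0} = \frac{1}{0} = \infty$ and $\frac{p_{-1}}{q_{-1}} = \frac{0}{1} = 0$. The only inputs I would need are the two standard consequences: the recurrence $p_k = a_k p_{k-1} + p_{k-2}$, $q_k = a_k q_{k-1} + q_{k-2}$, and, from $\det M(a) = -1$, the determinant identity $p_k q_{k-1} - p_{k-1} q_k = (-1)^k$.

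The key reduction is to observe that, in every branch of the statement and for both parities, the \emph{unordered} pair $\{\beta,\gamma\}$ is the same, namely $\bigl\{\tfrac{p_{n-1}}{q_{n-1}},\ \tfrac{p_n-p_{n-1}}{q_n-q_{n-1}}\bigr\}$. Indeed, applying the recurrence to the last partial quotient shows that $[a_1,\ldots,a_{n-1},a_n-1]$ has numerator $(a_n-1)p_{n-1}+p_{n-2} = p_n - p_{n-1}$ and denominator $q_n - q_{n-1}$, so it equals $\frac{p_n-p_{n-1}}{q_n-q_{n-1}}$. When $a_n=1$ this is $\frac{p_{n-2}}{q_{n-2}} = [a_1,\ldots,a_{n-2}]$ (since then $p_n-p_{n-1}=p_{n-2}$), and in the short-tail cases it degenerates to $\frac{1}{0}=\infty$ or to $\frac{0}{1}=[0]$. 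These are precisely the three cosmetic forms listed for $a_n\ge 2$, for $a_n=1$, and for $n\le 2$, so the genuine content concerns only the two convergent-type fractions above.

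With this in hand the conclusion is quick. The Farey-sum claim is immediate from the mediant:
\[
\tfrac{p_{n-1}}{q_{n-1}} \sharp \tfrac{p_n-p_{n-1}}{q_n-q_{n-1}} = \tfrac{p_{n-1}+(p_n-p_{n-1})}{q_{n-1}+(q_n-q_{n-1})} = \tfrac{p_n}{q_n} = \alpha.
\]
For the Farey-neighbor claim I would compute $ay-bx$ for the \emph{ordered} pair prescribed in the statement and reduce it, via the determinant identity, to $\pm\bigl(p_n q_{n-1} - p_{n-1}q_n\bigr) = \pm(-1)^n$. The crucial point is that the statement assigns $\frac{p_{n-1}}{q_{n-1}}$ to $\beta$ when $n$ is even but to $\gamma$ when $n$ is odd; this parity-dependent swap of roles is exactly what turns $(-1)^n$ into $+1$ in both cases, giving $ay-bx=1$. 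Irreducibility of $\beta$ and $\gamma$, needed for them to count as Farey neighbors, is automatic, since any common factor would divide the cross-determinant $\pm 1$.

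The conceptual work here is slight; essentially all the care goes into the bookkeeping of degenerate cases. Concretely, the main thing to watch is that the conventions $\frac{p_0}{q_0}=\frac{1}{0}$ and $\frac{p_{-1}}{q_{-1}}=\frac{0}{1}$ make the $n=1$ and $n=2$ rows of the statement (and the $a_n=1$ simplification) literally agree with the unified formula, and that the left/right assignment is taken in the order yielding determinant $+1$ rather than $-1$. These are the only places where an error could creep in; the core identities are one-line consequences of the matrix model.
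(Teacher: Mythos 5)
Your proof is correct, and it takes a genuinely different route from the paper's. The paper proves $\alpha=\beta\sharp\gamma$ by induction on $n$: it peels off the first partial quotient $a_1$, applies the induction hypothesis to $[a_2,\ldots,a_n]$ to get its Farey parent $(\tfrac{r}{s},\tfrac{r'}{s'})$, and then transports everything through $x\mapsto a_1+\tfrac1x$, checking $r's-rs'=1$ by hand. You instead work directly with the convergents $p_k,q_k$, observing that in every branch the unordered pair $\{\beta,\gamma\}$ is $\bigl\{\tfrac{p_{n-1}}{q_{n-1}},\tfrac{p_n-p_{n-1}}{q_n-q_{n-1}}\bigr\}$, after which the mediant identity is immediate and the neighbor condition reduces to $p_nq_{n-1}-p_{n-1}q_n=(-1)^n$ together with the parity-dependent assignment of roles. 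Your key computations check out: $(a_n-1)p_{n-1}+p_{n-2}=p_n-p_{n-1}$ unifies the $a_n\ge2$, $a_n=1$, and degenerate branches under the conventions $\tfrac{p_0}{q_0}=\tfrac10$, $\tfrac{p_{-1}}{q_{-1}}=\tfrac01$; and for $n$ even one gets $q_{n-1}(p_n-p_{n-1})-(q_n-q_{n-1})p_{n-1}=(-1)^n=1$ while for $n$ odd the swapped ordering gives $-(-1)^n=1$, exactly as you claim. What your approach buys is a closed-form, non-inductive argument that makes the case analysis cosmetic and makes irreducibility of $\beta,\gamma$ fall out of the unit determinant for free; what the paper's recursion buys is that it stays entirely inside the Farey-parent formalism it has already set up, without invoking the convergent recurrences. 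Both rest ultimately on the same determinant identity.
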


\begin{proof} 
Note that, as a regular continued fraction, each rational number has expressions in both odd and even lengths. It is easy to check that two definitions (the odd case and the even case) coincide. 

We will prove the equation $\alpha =\beta \sharp \gamma$ by induction on $n$. The cases $n=0$ and $n=1$ are clear. 
Now, we suppose that the statement holds for $n-1$.
We only show the case $n \ge 3$ is odd and $a_n\geq 2$; the proofs in other cases are similar. 
Not that we have  $\beta=[a_1,\ldots,a_{n-1},a_n-1]$ and  $\gamma=[a_1,\ldots,a_{n-1}]$ now. 
Set $[a_2,\ldots,a_{n-1}]=\frac{r}{s}$, $[a_2,\ldots,a_{n-1},a_n-1]=\frac{r'}{s'}$.  
 
By  induction hypothesis, $(\frac{r}s, \frac{r'}{s'})$ is the Farey parents of $\frac{r+r'}{s+s'}=[a_2, a_3, \ldots, a_n]$. 
Since $\beta= \frac{r'a_1+s'}{r'}$, $\gamma= \frac{ra_1+ s}{r}$ and  $sr'-rs'=1$, $\beta$ and $\gamma$ are Farey neighbors.
Moreover, it follows from induction hypothesis that we have 
$$
\beta \sharp \gamma
=\dfrac{a_1(r+r')+s+s'}{r+r'}=a_1+\dfrac{1}{\ \dfrac{r+r'}{s+s'}\ } \\
=a_1+\dfrac{1}{[a_2,\ldots,a_n]}=[a_1, a_2, \ldots, a_n]=\alpha.$$
\end{proof}

For a quiver $Q$ of type $A$, 
let denote by $Q^{\mathsf{rot}}$ the quiver obtained from $Q$ by $\pi$-rotation. Since $Q\simeq Q^{\mathsf{rot}}$ as quivers, their closure polynomials are same;
\begin{equation}\label{rotation closure poly.}
\cl(Q)=\cl(Q^\mathsf{rot}).    
\end{equation}

For $\alpha \in \mathbb{Q}\cap (1, \infty )$, the equations \eqref{eq3-4}, \eqref{eq3-5} and  \eqref{eq3-6} imply that 
\begin{align}
Q_{\mathfrak{\mathfrak{i}}(\alpha )}^\mathcal{R}&=(Q_{\alpha }^\mathcal{R})^{\vee}, \label{eq3-13}\\ 
Q_{(\mathfrak{ir})(\alpha )}^\mathcal{R}&=(Q_{\alpha }^\mathcal{R})^{\mathsf{rot}},  \label{eq3-14}\\ 
Q_{\mathfrak{r}(\alpha )}^\mathcal{R}&=(Q_{\alpha }^\mathcal{R})^{\mathsf{rot}\vee}=(Q_{\alpha }^\mathcal{R}) ^{\vee\mathsf{rot}}.  \label{eq3-15}
\end{align}

Except for the denominator of $\mathfrak{i}(\alpha )$, the denominator and numerator polynomials of $q$-deformations of $\mathfrak{i}(\alpha ), \mathfrak{r}(\alpha ), (\mathfrak{ir})(\alpha )$ are computed from that of $\alpha $ and its Farey parent as follows. 

\begin{thm}\label{3-6}
Let $\alpha \in \mathbb{Q}\cap (1, \infty )$ and $(\beta , \gamma )$ be its parents. 
Then, the following hold.
\begin{enumerate}
\item[(1)] $\mathcal{R}_{\mathfrak{i}(\alpha )}(q) =
\mathcal{R}_{ \mathfrak{r}(\alpha )}(q) =\mathcal{R}_{\alpha }^{\vee}(q)$ and 
$\mathcal{R}_{ (\mathfrak{ir})(\alpha )}(q) =
\mathcal{R}_{\alpha }(q)$. 
\vspace{0.1cm} 
\item[(2)] $\mathcal{S}_{(\mathfrak{ir})(\alpha )}(q)=\mathcal{R}_{\beta  }(q)$, and 
$\mathcal{S}_{\mathfrak{r}(\alpha )}(q) =\mathcal{R}_{\gamma }^{\vee}(q) $. 
\end{enumerate}
\end{thm}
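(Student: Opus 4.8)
The plan is to transport both parts to the combinatorics of closure polynomials via Theorem~\ref{theorem:MO closure}, and then feed in the quiver identities \eqref{eq3-13}--\eqref{eq3-15} together with the two elementary facts that $\cl$ is compatible with the reciprocal (namely \eqref{opposite closure poly 2}, $\cl(Q^\vee)=\cl(Q)^\vee$) and is invariant under $\pi$-rotation (namely \eqref{rotation closure poly.}, $\cl(Q^{\mathsf{rot}})=\cl(Q)$).

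Part (1) is then immediate. For the first identity I would compute $\cR_{\mathfrak{i}(\alpha)}(q)=\cl(Q_{\mathfrak{i}(\alpha)}^{\cR})=\cl((Q_\alpha^{\cR})^\vee)=\cl(Q_\alpha^{\cR})^\vee=\cR_\alpha^\vee(q)$, using Theorem~\ref{theorem:MO closure}, then \eqref{eq3-13}, then \eqref{opposite closure poly 2}. The equality $\cR_{(\mathfrak{ir})(\alpha)}(q)=\cR_\alpha(q)$ comes the same way from \eqref{eq3-14} and rotation-invariance, and $\cR_{\mathfrak{r}(\alpha)}(q)=\cR_\alpha^\vee(q)$ from \eqref{eq3-15} by applying both facts in turn.

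Part (2) requires more care, because the quivers $Q^{\cS}$ of the transformed fractions are not a mere decoration of $Q_\alpha^{\cS}$, so I would first write out the continued fractions. Setting $\alpha=[a_1,\dots,a_{2m}]$ with $a_1\ge 1$ and applying \eqref{eq3-4} and \eqref{eq3-6} to $\alpha^{-1}=[0,a_1,\dots,a_{2m}]$ before reciprocating (invoking Remark~\ref{odd length} to allow odd length), one finds $(\mathfrak{ir})(\alpha)=[a_{2m},a_{2m-1},\dots,a_1]$ and, when $a_{2m}\ge 2$, $\mathfrak{r}(\alpha)=[1,a_{2m}-1,a_{2m-1},\dots,a_1]$; meanwhile Lemma~\ref{farey sums and continued fractions} gives the parents $\beta=[a_1,\dots,a_{2m-1}]$ and $\gamma=[a_1,\dots,a_{2m-1},a_{2m}-1]$ (and one checks against Lemma~\ref{Farey neighbors fandamentals} that $\beta$ is the left parent). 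Reading off the quivers from the definitions of $Q^{\cR}$ and $Q^{\cS}$, I expect to obtain $Q_{(\mathfrak{ir})(\alpha)}^{\cS}=Q(0,\mathbf{b}^{\pal})$ where $\mathbf{b}=(a_1-1,a_2,\dots,a_{2m-1}-1)$ is the (odd-length) sequence defining $Q_\beta^{\cR}$, and $Q_{\mathfrak{r}(\alpha)}^{\cS}=Q(0,\mathbf{d}^{\pal})$ where $\mathbf{d}=(a_1-1,a_2,\dots,a_{2m-1},a_{2m}-2)$ is the (even-length) sequence defining $Q_\gamma^{\cR}$.

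The two desired identities then fall out of \eqref{(0,a)=a^v} and Lemma~\ref{lem:closure of palindromic}. In the first case $\cl(Q(0,\mathbf{b}^{\pal}))=\cl(Q(\mathbf{b}^{\pal}))^\vee$ by \eqref{(0,a)=a^v}, and since $\mathbf{b}$ has odd length Lemma~\ref{lem:closure of palindromic} gives $\cl(Q(\mathbf{b}^{\pal}))=\cl(Q(\mathbf{b}))^\vee$, so the two reciprocals cancel and $\cS_{(\mathfrak{ir})(\alpha)}(q)=\cl(Q(\mathbf{b}))=\cR_\beta(q)$. In the second case $\mathbf{d}$ has even length, so Lemma~\ref{lem:closure of palindromic} contributes no reciprocal and exactly one survives from \eqref{(0,a)=a^v}, giving $\cS_{\mathfrak{r}(\alpha)}(q)=\cl(Q(\mathbf{d}))^\vee=\cR_\gamma^\vee(q)$. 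The main obstacle is the bookkeeping: I must verify that after the reversals in \eqref{eq3-4}/\eqref{eq3-6} the boundary entries ($a_1-1$ at one end and $a_{2m}-1$ or $a_{2m}-2$ at the other) line up precisely, so that $Q^{\cS}$ of the transformed fraction is literally $Q(0,\cdot^{\pal})$ of the parent's defining sequence; and I must separately dispose of the degenerate cases $m=1$, $a_{2m}=1$, and entries collapsing to $0$, where $Q_\alpha^{\cS}$ is governed by the special conventions recorded after its definition.
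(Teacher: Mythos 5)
Your proposal is correct and follows essentially the same route as the paper: part (1) from \eqref{eq3-13}--\eqref{eq3-15} combined with \eqref{opposite closure poly 2} and \eqref{rotation closure poly.}, and part (2) by computing the regular continued fraction expansions of $(\mathfrak{ir})(\alpha)$ and $\mathfrak{r}(\alpha)$ via \eqref{eq3-4}, \eqref{eq3-6} and Lemma~\ref{farey sums and continued fractions}, then matching $Q^{\cS}$ of the transformed fractions with $Q^{\cR}_\beta$ and $Q^{\cR}_\gamma$ up to reversal and a prepended $0$. Your packaging of the rotation step through Lemma~\ref{lem:closure of palindromic} and \eqref{(0,a)=a^v} is just a rephrasing of the paper's direct use of $\mathsf{rot}$ and $\vee$, and the degenerate case $a_{2m}=1$ is likewise deferred in the paper with the remark that the same argument applies.
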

\begin{proof}
(1) By the equations \eqref{opposite closure poly 2} and \eqref{rotation closure poly.}, these follow from \eqref{eq3-13}, \eqref{eq3-14}, and \eqref{eq3-15}.

(2) We write $\alpha $ as $\alpha =[a_1, a_2, \ldots , a_{2m}]$.
Then, it follows from Lemma \ref{farey sums and continued fractions} that $\beta=[a_1, \ldots , a_{2m-1}]$.
Since $\alpha^{-1}=[0,a_{1},\ldots, a_{2m}]\in\mathbb{Q}\cap(0,1)$, we have $(\mathfrak{ir})(\alpha^{-1} )=[0, a_{2m},  \ldots, a_1]$ by \eqref{eq3-4}. Thus, 
$(\mathfrak{ir})(\alpha )=[a_{2m},  \ldots , a_1]$. 
Hence, Theorem \ref{theorem:MO closure} and \eqref{rotation closure poly.} imply that
\begin{align*} 
\mathcal{S}_{(\mathfrak{ir})(\alpha )}(q) 
& =\cl(Q(0, a_{2m-1}-1,a_{2m-2},\ldots, a_{2},a_1-1)) \\
& =\cl(Q(0, a_{2m-1}-1,a_{2m-2},\ldots, a_{2},a_1-1)^\mathsf{rot}) \\
& =\cl(Q(a_{1}-1,a_{2},\ldots, a_{2m-2},a_{2m-1}-1))\\
& =\mathcal{R}_{\beta}(q). 
\end{align*}

Finally, we consider $\mathcal{S}_{\mathfrak{r}(\alpha )}(q)$.
Suppose that $a_{2m}>1$.
In this case, it follows from Lemma \ref{farey sums and continued fractions} that $\gamma=[a_1, \ldots , a_{2m-1},a_{2m}-1]$.
By \eqref{eq3-6}, $\mathfrak{r}(\alpha)=[1,a_{2m}-1,a_{2m-1},\ldots, a_1]$.
Thus, we have
\begin{align*} 
\mathcal{S}_{\mathfrak{r}(\alpha )}(q) 
& =\cl(Q(a_{2m}-2,a_{2m-1},\ldots, a_{2},a_1-1)^\vee) \\
&  =\cl(Q(a_{2m}-2,a_{2m-1},\ldots, a_{2},a_1-1))^\vee \\
&  =\cl(Q(a_{2m}-2,a_{2m-1},\ldots, a_{2},a_1-1)^{\mathsf{rot}})^\vee \\
& =\cl(Q(a_{1}-1,a_{2},\ldots, a_{2m-1},a_{2m}-2))^\vee\\
& =\mathcal{R}_{\gamma }^\vee(q). 
\end{align*}
In the case where $a_{2m}=1$, by the same argument the same equation is derived.
\end{proof}

For a rational number $\alpha $ with $0<\alpha <1$, the $q$-deformations of $\mathfrak{i}(\alpha ), \mathfrak{r}(\alpha )$ and $(\mathfrak{ir})(\alpha )$ behave as follows. 

\begin{prop}\label{3-7}
For a rational number $\alpha \in \mathbb{Q}\cap (0,1)$, we have the followings.
\begin{enumerate}
\item[(1)] $\mathcal{S}_{\mathfrak{r}(\alpha )}(q) = \mathcal{S}_{\alpha }^{\vee}(q)=\mathcal{S}_{\mathfrak{i}(\alpha )}(q) =\mathcal{R}_{\alpha ^{-1}}(q)$. 
\item[(2)] $\mathcal{R}_{\mathfrak{i}(\alpha )}(q) =\mathcal{R}_{\alpha ^{-1}}(q) -\mathcal{S}_{\alpha ^{-1}}(q),\ 
\mathcal{R}_{r(\alpha )}(q) =\mathcal{R}_{\alpha ^{-1}}(q) -\mathcal{R}_{\gamma ^{-1}}(q)$, 
where $(\beta , \gamma )$ is the parent of $\alpha $. 
\end{enumerate}
\end{prop}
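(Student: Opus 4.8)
The plan is to reduce every assertion to a statement about $\alpha^{-1}\in\QQ\cap(1,\infty)$, for which the relevant formulas are already in hand, using the inversion rules \eqref{eq3-3} together with two basic tools: Lemma~\ref{wakui-1.6}, which converts $\mathcal{R},\mathcal{S}$ of a fraction in $(0,1)$ into those of its reciprocal, and the duality Propositions~\ref{dual1} and~\ref{dual2}, which supply the $\vee$-symmetries via the congruences recorded in \eqref{r and i}. Throughout I write $\alpha=\frac{z}{c}$ with Farey parent $(\beta,\gamma)=(\frac{x}{a},\frac{y}{b})$, so that $z=x+y$, $c=a+b$, and by \eqref{eq3-1} and \eqref{r and i} we have $\mathfrak{i}(\alpha)=\frac{c-z}{c}$, $\mathfrak{r}(\alpha)=\frac{a}{c}$, $(\mathfrak{ir})(\alpha)=\frac{b}{c}$, with $az\equiv1$ and $b\equiv-a\pmod c$.

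For part (1) I would argue by a chain of three equalities. The equality $\mathcal{S}_{\mathfrak{i}(\alpha)}(q)=\mathcal{R}_{\alpha^{-1}}(q)$ is immediate from \eqref{wakui-1.6-2} applied to $\frac{c-z}{c}$, whose reciprocal is $\frac{c}{z}=\alpha^{-1}$. Next, $(c-z)\equiv-z\pmod c$ lets Proposition~\ref{dual1} identify $\mathcal{S}_{\mathfrak{i}(\alpha)}(q)$ with $\mathcal{S}_\alpha^{\vee}(q)$, while $az\equiv1\pmod c$ lets Proposition~\ref{dual2} identify $\mathcal{S}_{\mathfrak{r}(\alpha)}(q)$ with $\mathcal{S}_\alpha^{\vee}(q)$; together these give all of (1). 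The first equation of part (2) is equally direct: applying \eqref{wakui-1.6-1} to $\frac{c-z}{c}$ yields $\mathcal{R}_{\mathfrak{i}(\alpha)}(q)=\mathcal{R}_{\alpha^{-1}}(q)-\mathcal{S}_{\alpha^{-1}}(q)$ at once.

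The remaining identity $\mathcal{R}_{\mathfrak{r}(\alpha)}(q)=\mathcal{R}_{\alpha^{-1}}(q)-\mathcal{R}_{\gamma^{-1}}(q)$ is where the real work lies. Applying \eqref{wakui-1.6-1} to $\mathfrak{r}(\alpha)=\frac{a}{c}$ and using $c-a=b$ gives $\mathcal{R}_{\mathfrak{r}(\alpha)}(q)=\mathcal{R}_{\frac{c}{b}}(q)-\mathcal{S}_{\frac{c}{b}}(q)$, so it suffices to prove $\mathcal{R}_{\frac{c}{b}}(q)=\mathcal{R}_{\alpha^{-1}}(q)$ and $\mathcal{S}_{\frac{c}{b}}(q)=\mathcal{R}_{\gamma^{-1}}(q)$. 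For the first, I would note that $zb\equiv-az\equiv-1\pmod c$ by \eqref{r and i}, so that the already-established sufficiency direction (Theorem~\ref{sufficiency} in its $\mathcal{R}$-form, via Lemma~\ref{KRS-conj.2}) forces $\mathcal{R}_{\frac{c}{z}}(q)=\mathcal{R}_{\frac{c}{b}}(q)$. For the second, I would identify the Farey parent of $\alpha^{-1}=\frac{c}{z}$: since $(\frac{b}{y},\frac{a}{x})$ are Farey neighbors with Farey sum $\frac{c}{z}$ (here $ay-bx=1$ gives both the neighbor relation and $\frac{b}{y}<\frac{a}{x}$), they form its parent, so $\gamma^{-1}=\frac{b}{y}$ is its left parent. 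Feeding this into Theorem~\ref{3-6}(2) for $\alpha^{-1}\in(1,\infty)$ gives $\mathcal{S}_{(\mathfrak{ir})(\alpha^{-1})}(q)=\mathcal{R}_{\gamma^{-1}}(q)$, and since $(\mathfrak{ir})(\alpha^{-1})=((\mathfrak{ir})(\alpha))^{-1}=\frac{c}{b}$ by \eqref{eq3-3}, this reads exactly $\mathcal{S}_{\frac{c}{b}}(q)=\mathcal{R}_{\gamma^{-1}}(q)$. Combining the two displays yields the claim.

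The main obstacle is this last identity: one must correctly transport the operation $\mathfrak{ir}$ and the Farey parent across the inversion $\alpha\mapsto\alpha^{-1}$, tracking which fraction becomes the left parent, and then recognize that the coincidence $\mathcal{R}_{\frac{c}{z}}(q)=\mathcal{R}_{\frac{c}{b}}(q)$ is precisely an instance of the sufficiency already proved, triggered by the congruence $zb\equiv-1\pmod c$. Everything else is routine bookkeeping with \eqref{r and i} and Lemma~\ref{wakui-1.6}.
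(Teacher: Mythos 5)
Your argument is correct and essentially coincides with the paper's: part (1) and the first identity of (2) use exactly the same ingredients (Propositions \ref{dual1} and \ref{dual2}, Lemma \ref{wakui-1.6}, and the congruences \eqref{r and i}), and your reduction of the last identity to $\mathcal{R}_{\frac{c}{b}}(q)-\mathcal{S}_{\frac{c}{b}}(q)$ followed by Theorem \ref{3-6}(2) is the paper's computation with $(\mathfrak{ir})(\alpha^{-1})=\frac{c}{b}$ and the transport of the Farey parent under inversion made explicit. The only (harmless) deviation is that you obtain $\mathcal{R}_{\frac{c}{b}}(q)=\mathcal{R}_{\alpha^{-1}}(q)$ from Theorem \ref{sufficiency} via the congruence $zb\equiv -1 \pmod{c}$, where the paper instead quotes $\mathcal{R}_{(\mathfrak{ir})(\alpha^{-1})}(q)=\mathcal{R}_{\alpha^{-1}}(q)$ from Theorem \ref{3-6}(1); the two are equivalent.
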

\begin{proof}
(1) The first (resp. second) equality follows from \eqref{eq3-1}, \eqref{r and i}, and Proposition~\ref{dual2} (resp. Proposition~\ref{dual1}). To see the third equality, express $\mathfrak{i}(\alpha)=\frac{a}x$ as an irreducible fraction. Then we have $\alpha=\frac{x-a}x$ and $\alpha^{-1}=\frac{x}{x-a}$. So the equality follows from \eqref{wakui-1.6-2}.

(2) The first equation immediately follows from \eqref{wakui-1.6-1}. Since $\mathfrak{i}(\mathfrak{ir}(\alpha))=\mathfrak{r}(\alpha)$ and $(\mathfrak{ir}(\alpha))^{-1}=\mathfrak{ir}(\alpha^{-1})$, replacing $\alpha$ by $\mathfrak{ir}(\alpha)$, the first equation yields 
\[
\mathcal{R}_{\mathfrak{r}(\alpha )}(q)
=\mathcal{R}_{(\mathfrak{ir})(\alpha ^{-1})}(q) -\mathcal{S}_{(\mathfrak{ir})(\alpha ^{-1})}(q).
\]
Applying Theorem~\ref{3-6}, we have 
$\mathcal{R}_{ \mathfrak{r}(\alpha )}(q)
=\mathcal{R}_{ \alpha ^{-1}}(q) -\mathcal{R}_{\gamma ^{-1}}(q)$. 
\end{proof} 

As an application of Proposition \ref{dual1} we have: 

\begin{thm}\label{3-9}
Let $\alpha \in \mathbb{Q}\cap (1, \infty )$, and express it as a regular continued fraction $\alpha =[a_1, a_2, \ldots , a_n]$. 
\begin{enumerate}
\item[(1)] If $a_1=1$, then 
\[
\mathcal{S}_{\mathfrak{i}(\alpha )}(q) =\mathcal{S}_{\frac{(a_2+1)(\alpha -1)+\alpha -2}{\alpha -1}}^{\vee}(q).
\]
\item[(2)] If $a_1\geq 2$, then 
\[
\mathcal{S}_{\mathfrak{i}(\alpha )}(q) =\mathcal{S}_{\frac{a_1(\alpha -1)+\alpha -2}{\alpha -1}}^{\vee}(q).
\]
\end{enumerate}
\end{thm}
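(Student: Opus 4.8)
The plan is to reduce the statement to an application of Proposition~\ref{dual1} after correctly identifying the regular continued fraction expansion of $\mathfrak{i}(\alpha)$ and relating the fraction appearing on the right-hand side to it. First I would record that, since $\alpha>1$, we have $\mathfrak{i}(\alpha)=(\mathfrak{i}(\alpha^{-1}))^{-1}$ by \eqref{eq3-3}, where $\alpha^{-1}=[0,a_1,a_2,\ldots,a_n]\in\QQ\cap(0,1)$. Applying formula \eqref{eq3-5} to $\alpha^{-1}$ then gives the continued fraction of $\mathfrak{i}(\alpha^{-1})$ explicitly: when $a_1\ge 2$ (so the ``$a_2\ge 2$'' branch of \eqref{eq3-5} applies with the roles shifted) one obtains $\mathfrak{i}(\alpha^{-1})=[0,1,a_1-1,a_2,\ldots,a_n]$, and when $a_1=1$ one lands in the other branch, $\mathfrak{i}(\alpha^{-1})=[0,a_2+1,a_3,\ldots,a_n]$. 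Inverting recovers $\mathfrak{i}(\alpha)=[1,a_1-1,a_2,\ldots,a_n]$ in case (2) and $\mathfrak{i}(\alpha)=[a_2+1,a_3,\ldots,a_n]$ in case (1). This explicit bookkeeping is the first half of the work.

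Next I would compute the denominator, writing $\mathfrak{i}(\alpha)=\frac{z}{c}$ as an irreducible fraction with $c$ the common denominator of $\alpha$ (since $\mathfrak{i}(\alpha)=1-\alpha^{-1}$ in the $(0,1)$ picture preserves the denominator, and inverting swaps numerator and denominator in a controlled way). The key algebraic identity to extract is what integer $a'$ satisfies $a'\equiv -z\pmod c$, because Proposition~\ref{dual1} tells us that $\cS_{\frac{z}{c}}(q)=\cS^{\vee}_{\frac{a'}{c}}(q)$ for any such $a'$. The content of the theorem is then that the fraction $\frac{a_1(\alpha-1)+\alpha-2}{\alpha-1}$ (resp. $\frac{(a_2+1)(\alpha-1)+\alpha-2}{\alpha-1}$) is precisely a fraction of the form $\frac{a'}{c}$ with $a'\equiv -z\pmod c$, so that $\cS_{\mathfrak{i}(\alpha)}(q)=\cS_{\frac{z}{c}}(q)=\cS^{\vee}_{\frac{a'}{c}}(q)$, matching the claimed right-hand side.

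I therefore plan to verify, by direct substitution of $\alpha=r/s$ (so $\alpha-1=(r-s)/s$), that the displayed fractions simplify to $\frac{a'}{c}$ with the correct numerator modulo $c$. Concretely, $\frac{a_1(\alpha-1)+\alpha-2}{\alpha-1}$ clears to $\frac{(a_1+1)(r-s)-s}{r-s}$, and I would check this equals $\frac{a'}{c}$ with $c=r-s$ (or its reduction) and $a'$ congruent to the negative of the numerator of $\mathfrak{i}(\alpha)$ modulo $c$; the distinction between the $a_1=1$ and $a_1\ge 2$ cases accounts for the shift $a_1\mapsto a_2+1$ coming from the two branches of \eqref{eq3-5}. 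The main obstacle I anticipate is not conceptual but bookkeeping: ensuring the fractions are genuinely irreducible and that the congruence $a'\equiv -z\pmod c$ holds on the nose rather than up to an unwanted extra multiple, and handling the boundary behavior of the continued fraction (e.g. when $a_1-1=0$ or when reductions collapse a term) so that the two cases of the theorem are stated with the correct threshold. Once the arithmetic identification is pinned down, the conclusion is immediate from Proposition~\ref{dual1} together with $f^{\vee\vee}=f$.
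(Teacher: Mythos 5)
Your proposal is correct and follows essentially the same route as the paper: identify $\mathfrak{i}(\alpha)=\frac{x}{x-a}$ for $\alpha=\frac{x}{a}$, clear denominators in the displayed fraction to see that its numerator is $\equiv -x \pmod{x-a}$, and invoke Proposition~\ref{dual1} (the irreducibility needed there being automatic from $\gcd(x,a)=1$). The continued-fraction bookkeeping via \eqref{eq3-5} in your first paragraph is superfluous — neither your congruence argument nor the paper's proof uses it — and note that the denominator $c$ of $\mathfrak{i}(\alpha)$ is $x-a$, not the denominator of $\alpha$, though your concrete computation with $c=r-s$ gets this right.
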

\begin{proof}
Set $\alpha =\frac{x}{a}$ with $1\leq a<x$. Then, 
$\mathcal{S}_{\mathfrak{i}(\alpha )}(q)
=\mathcal{S}_{\frac{x}{x-a}}(q)$, and hence by Proposition~\ref{dual1}
\begin{equation}\label{natural}
\mathcal{S}_{\frac{x}{x-a}}(q) =\mathcal{S}_{\frac{x^{\prime}}{x-a}}^{\vee}(q) 
\end{equation}
for $x^{\prime}\in \mathbb{Z}$ such that $x^{\prime}\equiv -x \pmod{x-a}$ and $\bigl\lfloor \frac{x}{x-a}\bigr\rfloor=\bigl\lfloor \frac{x^{\prime}}{x-a} \bigr\rfloor$. 

(1) Since $a_1=1$, we have 
$\frac{a}{x-a}=[a_2, \ldots , a_n]$, and $0\leq a-a_2(x-a)<x-a$. 
As $x^{\prime}$ one can take $x^{\prime}:=-x+(a_2+3)(x-a)$.  
Thus by \eqref{natural} we have 
\begin{align*}
\mathcal{S}_{\frac{x}{x-a}}(q)
&=\mathcal{S}_{\frac{-x+(a_2+3)(x-a)}{x-a}}^{\vee}(q)  \\ 
&=\mathcal{S}_{\frac{-\alpha +(a_2+3)(\alpha -1)}{\alpha -1}}^{\vee}(q)\\ 
&=\mathcal{S}_{\frac{(a_2+1)(\alpha -1)+\alpha -2}{\alpha -1}}^{\vee}(q). 
\end{align*}

(2) Since $a_1\geq 2$, we have 
$\frac{x}{a}-a_1=[0, a_2, \ldots , a_n]$ and $0\leq x-aa_1<a$. 
In this case 
one can take $x^{\prime}:=-x+(a_1+2)(x-a)$. 
Then, by the same argument of the proof of Part (1), the assertion is derived. 
\end{proof} 

\section{A formula for computing closure polynomials of type \texorpdfstring{$A$}{A}}\label{sec-6}

{On the one hand, for an irreducible fraction $\alpha >1$, the denominator and numerator polynomials of $[\alpha]_q$ are given by closure polynomials of some quivers of type $A$ (see  Theorem \ref{theorem:MO closure}).
On the other hand, from a representation theoretical viewpoint, the closure polynomial of a type $A$ quiver $Q$ counts subrepresentations of  ``the full interval representation" of $Q$ in which a field $\mathsf{k}$ corresponds to each vertex and the identity map corresponds to each arrow.
In this section, we give an expression to calculate $\cl(Q)$ that explicitly gives the number of subrepresentations of the full interval representation.}\\

{Let $Q$ be a quiver of type $A$, that is, the underlying graph of $Q$ is $A_n=1-2-3-\cdots-n$.}
A \textit{representation} of $Q$ over a field $\mathsf{k}$ is a system $M=(M_{a},\varphi_{\alpha})_{a\in Q_0,\alpha\in Q_1}$ ($M=(M_a,\varphi_\alpha)$ for short) consisting of $\mathsf{k}$-vector spaces $M_a$ $(a\in Q_0)$, and $\mathsf{k}$-linear maps $\varphi_\alpha:M_{s(\alpha)}\to M_{t(\alpha)}$ ($\alpha\in Q_1$). 
{The \textit{dimension} of $M$ is the sum of $\mathsf{k}$-dimensions of $M_{a}$. }
A representation $M'=(M'_a,\varphi'_\alpha)$ is said to be a \textit{subrepresentation} of $M$ if $M'_a$ is a subspace of $M_a$, and $\varphi_\alpha'=\varphi_\alpha|_{M'_a}$.
For two representations $M=(M_a,\varphi_\alpha)$ and $N=(N_a,\psi_\alpha)$, a \textit{morphism} of representations $f:M\to N$ is a family $f=(f_a)_{a\in Q_0}$ of $\mathsf{k}$-linear maps $f_a:M_a\to N_a$ such that $\psi_\alpha f_{s(\alpha)}=f_{t(\alpha)}\varphi_\alpha$ for any arrow $\alpha$.

The category of finite dimensional representations of $Q$ is denoted by $\mathsf{rep}(Q)$.
It is well-known that there is an $\mathsf{k}$-linear equivalence between $\mathsf{rep}(Q)$ and the category of finitely generated $\mathsf{k}Q$-modules, where $\mathsf{k}Q$ is the path algebra of $Q$.
For a vertex $i \in Q_0$, we denote by $S(i)$ the corresponding simple $\mathsf{k}Q$-module.
{For a $\mathsf{k}Q$-module $M$, we also denote by $\mathsf{rad}(M)$, $\mathsf{top}(M)$, and $\mathsf{soc}(M)$ the Jacobson radical, the top, and the socle of $M$, respectively.}
The \textit{support} of $M$ is the set of composition factors, which is denoted by $\mathsf{supp}(M)$.
In this subsection, any objects of $\mathsf{rep}(Q)$ are freely regarded as objects of $\mathsf{mod}~\mathsf{k}Q$.
For representations of quivers, see \cite[Chapters II and III]{ASS} for more details.

By the Gabriel theorem (for example, see \cite[Chapter VII, Theorem 5.10]{ASS}), there is one-to-one corresponding between indecomposable objects of $\mathsf{rep}(Q)$ and positive roots of $A_n$, that is, pairs $(i,j)$ with $1\leq i\leq j \leq n$.
In this correspondence, each pair $(i,j)$ is assigned with the interval representation $\mathbb{I}[i,j]=(M_a,\varphi_\alpha)$, where
\[
M_a=\left\{\begin{array}{ll}
\mathsf{k} & \text{if $i\leq a\leq j$,} \\
\{0\} & \text{otherwise,}
\end{array}\right.
\quad
\varphi_\alpha=\left\{\begin{array}{ll}
1 & \text{if $i\leq s(\alpha)$ and $t(\alpha)\leq j$,} \\
0 & \text{otherwise.}
\end{array}\right.
\]
{Following this notation, $\mathbb{I}[1,n]$ is called the \textit{full interval representation} of $Q$.} 
Then, it follows from the definition of the closure polynomial that 
the coefficient of $q^{\ell}$ of $\mathsf{cl}(Q)$ is equal to the number of $\ell$-dimensional subrepresentations of $\mathbb{I}[1,n]$.\\

{Throughout this section, 
we fix a type $A$ quiver $Q=Q(\mathbf{a})$ for some tuple $\mathbf{a}=(a_1,a_2,\ldots, a_s)\in \mathbb{Z}^s_{\geq 0}$, which has $n$ vertices, and denote by $\mathbb{I}(\mathbf{a})$ the full representation of $Q(\mathbf{a})$. }
It is clear that $\rho_1(Q(\mathbf{a}))=\mathsf{dim}_\mathsf{k}~\mathsf{soc}({\mathbb{I}(\mathbf{a})})$, which equals to the number of sinks of $Q(\mathbf{a})$.
Note that the Jordan-H\"{o}lder theorem implies that any coefficients of $\cl(Q(\mathbf{a}))$ are greater than or equal to $1$.
This yields that, for any irreducible fraction $\alpha>1$, any coefficients of the polynomials $\cR_\alpha{(q)}$ and $\cS_\alpha{(q)}$ are greater than $1$. 
We also remark that the top and the socle of ${\mathbb{I}(\mathbf{a})}$ is given by
\begin{align*} 
& \mathsf{top}({\mathbb{I}(\mathbf{a})})=\bigoplus_{k\geq 1}S(1+a_1+a_2+\cdots +a_{2k-1}) \\
& \mathsf{soc}({\mathbb{I}(\mathbf{a})}))=\left\{\begin{array}{ll}
     S(1)\oplus \displaystyle\bigoplus_{k\geq 1}S(1+a_1+a_2+\cdots +a_{2k}) & \text{if $a_1\neq 0$,} \\ [10pt]
     \displaystyle\bigoplus_{k\geq 1}S(1+a_1+a_2+\cdots +a_{2k}) & \text{if $a_1= 0$.} 
\end{array}\right.
\end{align*}
Now, we choose $1\leq k_1<k_2<\cdots< k_t$ and $1\leq \ell_1<\ell_2< \cdots<  \ell_{t'}$ to be
\begin{align*}
& \mathsf{top}({\mathbb{I}(\mathbf{a})})=S(k_1)\oplus\cdots \oplus S(k_t), \\
& \mathsf{soc}({\mathbb{I}(\mathbf{a})})=S(\ell_1)\oplus\cdots \oplus S(\ell_{t'}).
\end{align*}
{Here, we put 
\[ \mathcal{T}_{\mathbf{a}}:=\{(k_{i_1},\ldots, k_{i_s})\in \mathbb{Z}^{s}\mid 1\leq i_1<\cdots <i_s\leq t,~s\in\mathbb{N}\}.
\]
}

A subquiver of $Q(\mathbf{a})$ of the form 
\[ \underbrace{\circ \too \circ \cdots \circ \too \circ}_{p_1\, \text{arrows}}\underbrace{ \oot \circ \cdots \circ \oot \circ}_{p_2  \, \text{arrows}} \]
is called a $(p_1,p_2)$-\textit{valley}.
For $(p_1,p_2)$-valley, we define a polynomial $\mathsf{val}_q(p_1,p_2)$ by
\[ \val_q(p_1,p_2):=\cl(Q(0,p_1,p_2)).\]
{Observe that the equation $\val_q(p_1,p_2)=\val_q(p_2,p_1)$ holds by \eqref{rotation closure poly.} and this can be calculated through the following.}

\begin{lem}
   For $(p_1,p_2)$-valley with $p_1\geq p_2$, we have
\[
   \val_q(p_1,p_2) = 1+\sum_{k=1}^{p_2+1}kq^k+(p_2+1)\sum _{k=p_2+2}^{p_1+1}q^{k}+\sum_{k=p_1+2}^{p_1+p_2+1}(p_1+p_2+2-k)q^k\] 
\end{lem}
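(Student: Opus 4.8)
The plan is to compute $\val_q(p_1,p_2)=\cl(Q(0,p_1,p_2))$ in closed form and then read off the coefficient of each $q^k$. Label the vertices of the valley $Q(0,p_1,p_2)$ as $v_0\to v_1\to\cdots\to v_{p_1}\oot v_{p_1+1}\oot\cdots\oot v_{p_1+p_2}$, so it has $n:=p_1+p_2+1$ vertices and a single sink $w:=v_{p_1}$ in the middle, toward which every vertex admits a directed path. By the correspondence of Section~\ref{sec-6} between closures of $Q(\mathbf{a})$ and subrepresentations of the interval module $M(0,p_1,p_2)=\mathbb{I}[1,n]$, a closure is a vertex subset $C$ closed under outgoing arrows; in particular, the coefficient of $q^1$ equals the number of sinks, namely $1$, which already agrees with the stated formula.

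The main step is a factorization of the generating function. First I would note that the empty closure contributes the constant term $1$, and that every nonempty closure $C$ must contain $w$, since following outgoing arrows from any $v\in C$ eventually reaches $w$. Conditioned on $w\in C$, the left arm $v_0,\dots,v_{p_1-1}$ and the right arm $v_{p_1+1},\dots,v_{p_1+p_2}$ decouple, and on each arm the trace of $C$ is forced to be a contiguous block abutting $w$ (an arm of length $p_i$ contributing blocks of sizes $0,1,\dots,p_i$). Hence the two arms contribute the generating functions $[p_1+1]_q$ and $[p_2+1]_q$, while $w$ itself contributes a factor $q$, giving
\[
\val_q(p_1,p_2)=1+q\,[p_1+1]_q\,[p_2+1]_q .
\]
This closed form is visibly symmetric in $p_1,p_2$, consistent with $\val_q(p_1,p_2)=\val_q(p_2,p_1)$; the hypothesis $p_1\ge p_2$ is used only to sort the coefficients.

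Finally I would expand the product. The coefficient of $q^k$ in $q\,[p_1+1]_q[p_2+1]_q$ equals the number of pairs $(j,\ell)$ with $0\le j\le p_1$, $0\le\ell\le p_2$ and $j+\ell=k-1$, a trapezoidal count. Using $p_1\ge p_2$, this count is $k$ for $1\le k\le p_2+1$, equals the constant $p_2+1$ for $p_2+2\le k\le p_1+1$, and equals $p_1+p_2+2-k$ for $p_1+2\le k\le p_1+p_2+1$; adding the constant term $1$ produces exactly the three sums in the statement.

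The genuinely delicate point is the arm analysis: one must verify that, once $w\in C$, the trace of $C$ on each arm is precisely a contiguous block adjacent to $w$ (so that the generating function is exactly $[p_i+1]_q$ and the arms truly decouple), and one must handle the degenerate cases $p_2=0$ and $p_1=p_2$, where the middle plateau range collapses to the empty sum. The remaining work — the convolution of $[p_1+1]_q$ and $[p_2+1]_q$ and the matching of the index ranges — is routine bookkeeping, and the inequality $p_1\ge p_2$ is exactly what makes the ascending, plateau, and descending regimes line up with the stated bounds.
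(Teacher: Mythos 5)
Your proof is correct, and since the paper's own proof is just the phrase ``direct computation,'' what you have written is exactly the computation the paper omits: the factorization $\val_q(p_1,p_2)=1+q\,[p_1+1]_q[p_2+1]_q$ followed by the trapezoidal convolution of $[p_1+1]_q$ and $[p_2+1]_q$ is the right way to organize it, and your index bookkeeping (including the degenerate cases $p_2=0$ and $p_1=p_2$) checks out. One caveat worth noting: you take closures to be vertex sets closed under \emph{outgoing} arrows (equivalently, supports of subrepresentations of $M(\mathbf a)$), whereas the literal wording of the definition in Subsection~\ref{subsec2-3} (``no arrow with source outside $C$ and target in $C$'') closes under \emph{incoming} arrows and would yield the reciprocal polynomial $[p_1+1]_q[p_2+1]_q+q^{p_1+p_2+1}$, whose linear coefficient counts sources rather than sinks; your convention is the one the stated formula and the rest of Section~\ref{sec-6} (e.g.\ $\rho_1=\dim_{\mathsf k}\mathsf{soc}(M(\mathbf a))$) actually require, so you have resolved the paper's internal inconsistency in the only way compatible with the lemma.
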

\begin{proof}
This lemma follows from direct computation.
\end{proof}

Now, we define a sequence of pairs of integers as follows: 
\begin{enumerate}
\item[(i)] Compute {$\mathbf{a}-\mathbf{1}:=
\left\{\begin{array}{ll}
(a_1-1, a_2-1,\ldots, a_s-1) & \text{if $a_1\neq 0$,} \\
( a_2-1, a_3-1\ldots, a_s-1) & \text{if $a_1=0$.}
\end{array}\right.
$}
\item[(ii)] We put 
\[
(b_1,b_2,\ldots, b_{2m}):=\left\{\begin{array}{ll}
     (0,\mathbf{a}-\mathbf{1},0) & \text{if $a_1\neq 0$ and $s$ is even,} \\ [8pt]
     (0,\mathbf{a}-\mathbf{1}) & \text{if $a_1\neq 0$ and $s$ is odd,} \\ [8pt]
     ({\mathbf{a}-\mathbf{1}}, 0)& \text{ if $a_1=0$ and $s$ is even,} \\[8pt]
     ({\mathbf{a}-\mathbf{1}})& \text{ if $a_1=0$ and $s$ is odd.} \\[8pt]
\end{array}\right.
\]
\item[(iii)] 
{We set $\mathcal{J}_{\mathbf{a}}:=\{(b_1,b_2), (b_3,b_4),\ldots, (b_{2m-1},b_{2m})\}$.
}
\end{enumerate}

\begin{prop}\label{subrep-rad}
The number of $\ell$-dimensional subrepresentation of $\mathsf{rad}({\mathbb{I}(\mathbf{a})})$ coincides with the coefficient of $q^\ell$ of 
    \[
    \val_q(b_1,b_2)\cdot \val_q(b_3,b_4)\cdots\val_q(b_{2m-1},b_{2m}).
    \]
\end{prop}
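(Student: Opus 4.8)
The plan is to compute the subrepresentation $\mathsf{rad}(M(\mathbf{a}))$ explicitly and to show that, as a representation of $Q(\mathbf{a})$, it decomposes into a direct sum of ``valley'' interval representations supported on pairwise disjoint sets of vertices, one for each consecutive pair $(b_{2j-1},b_{2j})$. The asserted product formula is then a formal consequence of this decomposition together with the closure/subrepresentation correspondence recalled at the start of the section.

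First I would pin down the sources of $Q(\mathbf{a})$. Since $M(\mathbf{a})=\mathbb{I}[1,n]$ has every structure map equal to $1$, one has $\mathsf{rad}(M(\mathbf{a}))_v=\sum_{\alpha\colon u\to v}\mathrm{im}(\varphi_\alpha)$, which equals $M_v=\mathsf{k}$ whenever $v$ is the target of some arrow and equals $0$ exactly when $v$ is a source of $Q(\mathbf{a})$. The sources are the peaks of the zigzag, namely the junctions $1+a_1+\cdots+a_{2k-1}$, together with the vertex $1$ when $a_1=0$ and the vertex $n$ when $s$ is odd; this is precisely the support of $\mathsf{top}(M(\mathbf{a}))$ recorded above. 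Thus $\mathsf{rad}(M(\mathbf{a}))$ is the interval representation with the source vertices deleted and all remaining maps still equal to $1$.

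The key step is the valley decomposition. Deleting a source disconnects $Q(\mathbf{a})$, so $\mathsf{rad}(M(\mathbf{a}))$ breaks up as a direct sum over the connected pieces lying strictly between consecutive sources. Between the sources at $1+a_1+\cdots+a_{2k-1}$ and $1+a_1+\cdots+a_{2k+1}$, removing both endpoints leaves $a_{2k}-1$ right arrows followed by $a_{2k+1}-1$ left arrows, i.e.\ the piece is the representation $M(0,a_{2k}-1,a_{2k+1}-1)$, a $(a_{2k}-1,a_{2k+1}-1)$-valley; at the left end, if $a_1\neq 0$, the piece $[1,a_1]$ is a $(0,a_1-1)$-valley, and symmetrically at the right end when $s$ is even. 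Reading off the arm lengths and comparing with the construction of $(b_1,\ldots,b_{2m})$ shows that the valley summands are exactly $M(0,b_1,b_2),M(0,b_3,b_4),\ldots,M(0,b_{2m-1},b_{2m})$: the prepended $0$ when $a_1\neq 0$ and the appended $0$ when $s$ is even produce the two boundary valleys, while their absence when $a_1=0$ (resp.\ $s$ odd) reflects that the corresponding endpoint is itself a source rather than a valley. Verifying that these four cases reproduce step (ii) of the construction is the main bookkeeping obstacle, and I would organise it by the parity of $s$ and the vanishing of $a_1$, checking a minimal case such as $s=2$ to fix the index alignment.

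Finally, because the valley summands are supported on pairwise disjoint vertex sets, every subrepresentation $W\subseteq\bigoplus_j M(0,b_{2j-1},b_{2j})$ splits as $W=\bigoplus_j W^{(j)}$, where $W^{(j)}$ is obtained by restricting $W$ to the $j$-th component and is automatically a subrepresentation since no arrow crosses between components; hence $\dim_{\mathsf{k}} W=\sum_j \dim_{\mathsf{k}} W^{(j)}$. Consequently the generating function counting subrepresentations of $\mathsf{rad}(M(\mathbf{a}))$ by dimension is the product of the corresponding generating functions of the individual valleys. By the correspondence between $\ell$-dimensional subrepresentations of $M(0,b_{2j-1},b_{2j})$ and $\ell$-closures of $Q(0,b_{2j-1},b_{2j})$, that factor is $\cl(Q(0,b_{2j-1},b_{2j}))=\val_q(b_{2j-1},b_{2j})$. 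Multiplying these yields $\val_q(b_1,b_2)\cdots\val_q(b_{2m-1},b_{2m})$, and extracting the coefficient of $q^\ell$ gives the number of $\ell$-dimensional subrepresentations of $\mathsf{rad}(M(\mathbf{a}))$, as claimed.
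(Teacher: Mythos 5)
Your proposal is correct and follows essentially the same route as the paper: both decompose $\mathsf{rad}(M(\mathbf{a}))$ as a direct sum of interval representations obtained by deleting the source vertices $k_1,\ldots,k_t$ (the support of the top), identify each summand with a $(b_{2j-1},b_{2j})$-valley, and obtain the product formula from the disjointness of the supports together with the closure/subrepresentation correspondence. Your justification that the radical vanishes exactly at the sources, and your explicit matching of the boundary valleys with the prepended/appended zeros in the construction of $(b_1,\ldots,b_{2m})$, make the bookkeeping slightly more explicit than the paper's, but the argument is the same.
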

\begin{proof}
{We show the case that $a_1>0$ and $s$ is even: the proof of other cases are similar.} 
In this case, $m=t+1$ and
the quiver $Q(\mathbf{a})$ is of the form:
\begin{align*} &\underbrace{1 \leftarrow \circ \cdots \circ\leftarrow \circ}_{b_2 \, \text{arrows}} \leftarrow  k_1\to \underbrace{\circ\to\circ \cdots\circ \to\circ}_{b_3 \, \text{arrows}}\underbrace{\leftarrow\circ \cdots \circ \leftarrow \circ}_{b_4  \, \text{arrows}} \leftarrow k_2\\
& \to\underbrace{\circ \to\circ \cdots\circ \to\circ}_{b_5 \, \text{arrows}}\underbrace{\leftarrow\circ \cdots \circ \leftarrow \circ}_{b_6  \, \text{arrows}} \leftarrow k_3\rightarrow\cdots
\leftarrow k_t\to\underbrace{\circ \to\circ \cdots\circ \to n}_{b_{2m-1} \, \text{arrows}}.
\end{align*}
This yields that $\mathsf{rad}({\mathbb{I}(\mathbf{a})})$ is decomposed as 
\[ 
\mathsf{rad}({\mathbb{I}(\mathbf{a})}))=\mathbb{I}[1,k_1-1]\oplus \mathbb{I}[k_1+1,k_2-1]\oplus \cdots\oplus \mathbb{I}[k_{t}+1,k_{t}-1]\oplus \mathbb{I}[k_{t}+1,n]. 
\]
{Thus, each subrepresentation $N\subset \mathsf{rad}(\mathbb{I}(\mathbf{a}))$ is the direct sum of subrepresentations $N_1\subset \mathbb{I}[1,k_1-1]$, $N_i\subset \mathbb{I}[k_i+1,k_{i+1}-1]$ $(i=1,\ldots, t-1)$, and $N_t\subset \mathbb{I}[k_t+1,n]$.}
{Since} the numbers of subrepresentations of $\mathbb{I}[1,k_1-1]$, $\mathbb{I}[k_i+1,k_{i+1}-1]$ $(i=1,\ldots, t-1)$, and $\mathbb{I}[k_t+1,n]$ are equal to 
{$\mathsf{val}_q(b_1,b_2)$, $\mathsf{val}_q(b_{2i+1},b_{2i+2})$ $(i=1,\ldots, t-1)$, and $\mathsf{val}_q(b_{2m-1},b_{2m})$, respectively, the assertion follows.}
\end{proof}

For each $k_i$ {$(i=1,\ldots, t)$}, a polynomial $\Delta_q(k_i)$ is defined as follows.
\begin{itemize}
\item[(i)] Suppose that $a_1>0$.  In this case, we define $\Delta_q(k_i)$ by
\[
\Delta_q(k_i):=\left\{\begin{array}{ll}
     q^{\ell_{i+1}-\ell_{i}+1}[\ell_{i}-k_{i-1}]_q[k_{i+1}-\ell_{i+1}]_q & \text{if $i\neq 1,t$,} \\ [8pt]
     q^{\ell_{2}}[k_{2}-\ell_{2}]_q& \text{ if $i=1$,} \\[8pt]
     q^{n-\ell_{t}+1}[\ell_t-k_{t-1}]_q& \text{ if $i=t$.}
\end{array}\right.
\]
\item[(ii)] Suppose that $a_1=0$. In this case, we define $\Delta_q(k_i)$ by
\[
\Delta_q(k_i):=\left\{\begin{array}{ll}
     q^{\ell_{i}-\ell_{i-1}+1}[\ell_{i-1}-k_{i-1}]_q[k_{i+1}-\ell_{i}]_q & \text{if $i\neq 1,t$,} \\ [8pt]
     q^{\ell_{1}}[k_{2}-\ell_{1}]_q& \text{ if $i=1$,} \\[8pt]
     q^{n-\ell_{t-1}+1}[\ell_{t-1}-k_{t-1}]_q& \text{ if $i=t$.} 
\end{array}\right.
\]
\end{itemize}
For each $k_i$, take a subset 
\[ \{(v_{2j-1}^{(k_i)},v_{2j}^{(k_i)})\mid j=1,2,\ldots, r_{k_i}\}\subset {\mathcal{J}_{\mathbf{a}}}
\]
such that any $(v_{2j-1}^{(k_i)},v_{2j}^{(k_i)})$-valley is not adjacent to vertex $k_i$.
Then, we set 
\[
\widetilde{\Delta}_q(k_i):=\Delta_q(k_i)\prod_{j=1}^{r_{k_i}}\val(v_{2j-1}^{(k_i)},v_{2j}^{(k_i)}).
\]

\begin{lem}\label{lem_delta}
The coefficient of $q^\ell$ of $\widetilde{\Delta}_q(k_i)$ coincides with 
the number of  $\ell$-dimensional subrepresentations $N$ of ${\mathbb{I}(\mathbf{a})}$ such that
$S(k_i)\in\mathsf{supp}(N)$, but $S(k_j)\notin\mathsf{supp}(N)$ for $i\neq j$.
\end{lem}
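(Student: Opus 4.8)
The plan is to reduce the statement to a count of certain subsets of $Q(\mathbf{a})_0$ and then match it against the factorization $\widetilde{\Delta}_q(k_i)=\Delta_q(k_i)\prod_j\val(v^{(k_i)}_{2j-1},v^{(k_i)}_{2j})$. A subrepresentation $N\le M(\mathbf{a})=\mathbb{I}[1,n]$ is determined by $\mathsf{supp}(N)=\{a\in Q(\mathbf{a})_0\mid N_a\neq 0\}$, with $\dim N=|\mathsf{supp}(N)|$ and $S(a)\in\mathsf{supp}(N)$ iff $a\in\mathsf{supp}(N)$; since every structure map of $M(\mathbf{a})$ is the identity, this support is closed under arrows, i.e.\ it contains $t(\alpha)$ whenever it contains $s(\alpha)$. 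First I would record the local shape of such a subset $T$: writing $T$ as a disjoint union of maximal line-intervals, each interval $[c,d]$ has both boundary arrows pointing inward ($c-1\to c$ and $d+1\to d$), since the opposite orientation at $c$ or $d$ would violate closedness under arrows. Consequently, inside a single valley $[k_j+1,k_{j+1}-1]$ between consecutive sources --- whose unique sink is the relevant $\ell$ --- every such interval must contain that sink, so a valley contributes at most one interval to $T$. Dually, if $k_i\in T$ then, $k_i$ being a source, closedness forces both adjacent sinks and the entire segment between them into the interval of $T$ through $k_i$.

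Next I would impose $k_i\in T$ together with $k_j\notin T$ for $j\neq i$. These conditions force a unique splitting $T=I\sqcup T'$, where $I$ is the single interval of $T$ through $k_i$ and $T'$ is a disjoint union of \emph{source-free} intervals. Since $I$ contains both sinks adjacent to $k_i$, and since any admissible interval inside a valley must contain its sink, the two valleys adjacent to $k_i$ are already occupied by $I$ and can carry no interval of $T'$; moreover $I$ must stop strictly before $k_{i-1}$ and $k_{i+1}$ in order to avoid a second source. Hence $T'$ ranges, independently in each valley \emph{not} adjacent to $k_i$, over the source-free intervals of that valley together with the empty choice. This is precisely the bijection behind the factorization: $I$ accounts for the factor $\Delta_q(k_i)$, while each non-adjacent valley $(b_{2j-1},b_{2j})$ contributes, by the previous proposition (equivalently by $\val_q(p_1,p_2)=\cl(Q(0,p_1,p_2))$), the factor $\val(b_{2j-1},b_{2j})$.

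It then remains to check, counting by dimension, that the choices of $I$ are enumerated by $\Delta_q(k_i)$. For an interior source ($i\neq 1,t$) with adjacent sinks $\ell<\ell'$, the interval $I=[c,d]$ is constrained by $c\in\{k_{i-1}+1,\dots,\ell\}$ and $d\in\{\ell',\dots,k_{i+1}-1\}$, and $\dim I=(\ell'-\ell+1)+(\ell-c)+(d-\ell')$; summing $q^{\dim I}$ over these two independent ranges yields exactly $q^{\ell'-\ell+1}[\ell-k_{i-1}]_q[k_{i+1}-\ell']_q$, which is the displayed $\Delta_q(k_i)$ after reindexing the sinks according to whether $a_1>0$ or $a_1=0$. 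The boundary cases $i=1$ and $i=t$, in which $I$ is forced up to an end of the quiver on one side so that one linear factor collapses and the leading $q$-power changes, are dealt with by the same endpoint bookkeeping. I expect the main obstacle to be exactly this final matching: verifying in all four boundary/parity regimes of the definition of $\Delta_q(k_i)$ that ``an admissible interval in a valley contains its sink'' does rule out double occupation of the adjacent valleys, and that the endpoint ranges for $I$ reproduce the stated $q$-powers and bracket factors. The underlying bijection is transparent; the care lies in aligning every index with the case distinctions.
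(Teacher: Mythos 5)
Your argument is correct and is essentially the paper's own proof: the paper likewise observes that $S(k_i)\in\mathsf{supp}(N)$ forces the interval $\mathbb{I}[\ell_i,\ell_{i+1}]$ between the two adjacent sinks into $N$, splits the rest of $N$ inside $[k_{i-1}+1,k_{i+1}-1]$ into two linear tails giving the two $q$-bracket factors of $\Delta_q(k_i)$, and lets the non-adjacent valleys contribute independently the $\val$ factors. You phrase this via closed vertex subsets rather than socles and quotients of interval modules, but the decomposition and the counting are identical.
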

\begin{proof}
We only show the statement when $a_1>0$ and $s$ is even; the proofs in other cases are similar. 

{Let $N_{(k_i)}$ be the largest dimensional subrepresentation of $\mathbb{I}(\mathbf{a})$ such that $S(k_i)\in\mathsf{supp}(N)$, but $S(k_j)\notin\mathsf{supp}(N)$ for $i\neq j$.
It is sufficient to show that the coefficient of $q^\ell$ of $\widetilde{\Delta}_q(k_i)$ coincides with the number of $\ell$-dimensional subrepresentations of $N_{(k_i)}$.
Observe that $\mathbb{I}[k_{i-1}+1,k_{i+1}-1]\subset N_{(k_i)}$ and every subrepresentation of $N_{(k_i)}$ must have $\mathbb{I}[\ell_{i},\ell_{i+1}]$ as a subrepresentation whose dimension is $\ell_{i+1}-\ell_{i}+1$.}
Here, if $k_{i-1}$ (resp. $k_{i+1}$) is not in $Q(\mathbf{a})_0$, then we replace $k_{i-1}+1$ by $\ell_{1}$ (resp. $k_{i+1}-1$ by $\ell_{i+1}$).
Now, we consider an isomorphism
\[ \mathbb{I}[k_{i-1}+1,k_{i+1}-1]/\mathbb{I}[\ell_{i},\ell_{i+1}]\simeq \mathbb{I}[k_{i-1}+1, \ell_i-1]\oplus\mathbb{I}[\ell_{i+1}+1,k_{i+1}-1].
\]
{Since the number of $\ell$-dimensional subrepresentations of $\mathbb{I}[k_{i-1}+1, \ell_i-1]$ (resp. $\mathbb{I}[\ell_{i+1}+1,k_{i+1}-1]$) corresponds to the coefficient of $q^\ell$ of $[\ell_{i}-k_{i-1}]_q$ (resp. $[k_{i+1}-\ell_{i+1}]_q$),}
the number of $\ell$-dimensional subrepresentations $N'$ of $\mathbb{I}[k_{i-1}+1,k_{i+1}-1]$ such that $S(k_i)\in\mathsf{supp}(N')$ is the coefficient of $q^{\ell}$ of $\Delta_q(k_i)$.
Remaining subrepresentations that must be counted come from subrepresentations of $\mathsf{rad}({\mathbb{I}(\mathbf{a})})/(\mathbb{I}[k_{i-1}+1,k_{i+1}-1]/S(k_i))$.
Therefore, the assertion follows {from Proposition \ref{subrep-rad}}.
\end{proof}

Next, for two $k_{i_1}<k_{i_2}$, we define 
\[ 
\Delta_q(k_{i_1},k_{i_2}):=\left\{\begin{array}{ll}
\dfrac{\Delta_q(k_{i_1})\Delta_q(k_{i_2})}{q[\ell_{i_2}-k_{i_2-1}]_q[k_{i_1+1}-\ell_{i_1+1}]_q} & \text{if $i_2=i_1+1$,}\\[15pt]
\Delta_q(k_{i_1})\Delta_q(k_{i_2}) & \text{otherwise.}
\end{array}\right.
\]
Inductively, for $k_{i_1}<k_{i_2}<\cdots <k_{i_r}$, we define
\[ \Delta_q(k_{i_1},k_{i_2},\ldots, k_{i_r}):=
\left\{\begin{array}{ll}
    \dfrac{\Delta_q(k_{i_1}, k_{i_2},\ldots, k_{i_r-1})\Delta_q(k_{i_r})}{q[\ell_{i_r}-k_{i_r-1}]_q} & \text{if $i_r=i_{r-1}+1$,}\\[15pt]
\Delta_q(k_{i_1}, k_{i_2},\ldots, k_{i_r-1})\Delta_q(k_{i_r}) & \text{otherwise.}
\end{array}\right.
\]
Take a subset
\[ \{(v_{2j-1}^{(k_{i_1},\ldots, k_{i_r})},v_{2j}^{(k_{i_1},\ldots, k_{i_r})})\mid j=1,2,\ldots, r_{(k_{i_1},\ldots, k_{i_r})}\}\subset {\mathcal{J}_{\mathbf{a}}}
\]
such that any $(v_{2j-1}^{(k_{i_1},\ldots, k_{i_r})},v_{2j}^{(k_{i_1},\ldots, k_{i_r})})$-valley is not adjacent to one of vertices $k_{i_1},\ldots, k_{i_r}$.
Then, {for $r=1,\ldots, t$,} we set 
\[
\widetilde{\Delta}_q(k_1,\ldots, k_r):=\Delta_q(k_1,\ldots,k_r)\prod_{j=1}^{r_{(k_1,\ldots,k_r)}}\val(v_{2j-1}^{(k_1,\ldots,k_r)},v_{2j}^{(k_1,\ldots,k_r)}).
\]

\begin{thm}
    The equation
    \begin{equation}\label{subrep Q(a)}
\cl({Q(\mathbf{a})})=\prod_{{(b_i,b_{i+1})\in\mathcal{J}_{\mathbf{a}}}}\val_q(b_i,b_{i+1})+\sum_{{(k_{i_1},\ldots, k_{i_s})\in \mathcal{T}_{\mathbf{a}}}}\widetilde{\Delta_q}(k_{i_1},\ldots, k_{i_s})
    \end{equation}
    holds.
\end{thm}
\begin{proof}
{By Proposition \ref{subrep-rad}, the first term of the right-hand side of \eqref{subrep Q(a)} counts  $\ell$-dimensional subrepresentations of $\mathsf{rad}(\mathbb{I}(\mathbf{a}))$.
Therefore, counting the cases where each $S(k_i)$ $(i=1,\ldots, t)$ belongs to the support is sufficient.}
By the proof of Lemma \ref{lem_delta}, the number of $\ell$-dimensional subrepresentations $N$ of ${\mathbb{I}(\mathbf{a})}$ such that $S(k_{i_1}), \ldots, S(k_{i_r})\in\mathsf{supp}(N)$ but $S_j\not\in \mathsf{supp}(N)$ for $j\neq k_{i_1},\ldots,k_{i_r}$ is the coefficient of $q^\ell$ of  $\widetilde{\Delta}_q(k_{i_1},\ldots, k_{i_r})$.
Thus, the assertion follows.
\end{proof}

\begin{exmp}
(1) Let $\mathbf{a}=(1,3,1,1)$. Then, the quiver $Q(\mathbf{a})$ is of the form
\[ 
Q(\mathbf{a})=1\oot 2\too 3\too 4\too 5\oot 6\too 7,
\]
and $((b_1,b_2),(b_3,b_4),(b_5,b_6))=((0,0), (2,0), (0,0))$. So, we compute
\begin{align*}
    \val_q(0,0)\val_q(2,0)\val_q(0,0)& =q^5+3q^4+4q^3+4q^2+3q+1,\\
   \widetilde{\Delta}_q(2) = q^5[1]_q[1]_q\val_q(0,0) & =q^6+q^5,  \\
   \widetilde{\Delta}_q(6) = q^3[3]_q\val_q(0,0) & =q^6+2q^5+2q^4+q^3,  \\
   \widetilde{\Delta}_q(2,6) =\dfrac{\Delta_q(2)\Delta_q(6)}{q[3]_q[1]_q} &=q^7.  
\end{align*}
Thus, we have
\[
\cl(Q(\mathbf{a}))=q^7+2q^6+4q^5+5q^4+5q^3+4q^2+3q+1.
\]

(2) Let $\mathbf{a}=(0,3,1,5,1)$. Then, the quiver $Q(\mathbf{a})$ is of the form
\[ 
Q(\mathbf{a})=1\too 2\too 3\too 4\oot 5\too 6\too 7\too8\too 9\too 10\oot 11 ,
\]
and $((b_1,b_2),(b_3,b_4))=((2,0), (4,0))$. So, we compute
\begin{align*}
    \val_q(2,0)\val_q(4,0) & =q^8+2q^7+3q^6+4q^5+4q^4+4q^3+3q^2+2q+1,\\
   \widetilde{\Delta}_q(1) = q^4\val_q(4,0) & =q^9+q^8+q^7+q^6+q^5+q^4, \\
   \widetilde{\Delta}_q(5) = q^7[3]_q& =q^9+q^8+q^7,  \\
   \widetilde{\Delta}_q(11) = q^2[5]_q\val_q(2,0)& =q^9+2q^8+3q^7+4q^6+4q^5+3q^4+2q^3+q^2, \\
   \widetilde{\Delta}_q(1,5) =\dfrac{\Delta_q(1)\Delta_q(5)}{q[3]_q} &=q^{10},  \\
   \widetilde{\Delta}_q(1,11) =\Delta_q(1)\Delta_q(11) &=q^{10}+q^9+q^8+q^7+q^6,  \\
   \widetilde{\Delta}_q(5,11) =\dfrac{\Delta_q(5)\Delta_q(11)}{q[5]_q} &=q^{10}+q^9+q^8,  \\
   \widetilde{\Delta}_q(1,5,11) =\dfrac{\Delta_q(1)\Delta_q(5)\Delta_q(11)}{q^2[3]_q[5]_q} &=q^{11}.  
\end{align*}
Thus, we have
\[
\cl(Q(\mathbf{a}))=q^{11}+3q^{10}+5q^9+7q^8+8q^7+9q^6+9q^5+8q^4+6q^3+4q^2+2q+1.
\]
\end{exmp}

\section{Special values of the \texorpdfstring{$q$}{q}-deformed rational numbers} \label{sec-7}
In \cite[Proposition~1.8]{MO}, it is shown that both $\cS_\alpha(-1)$ and $\cR_\alpha(-1)$ belong to  $\{0, \pm 1\}$. From this, we see that for an irreducible fraction $\frac{r}s$, $s$ is even if and only if $\cS_{\frac{r}s}(q)$ is divisible by $[2]_q=1+q$.  
In this section, we extend this observation. Set $$\omega :=\dfrac{-1+\sqrt{-3}}2.$$ 

\begin{thm}\label{mod 3}
For a rational number  $\alpha$,  we have $\cR_\alpha(\omega), \cS_\alpha(\omega) \in \{0, \pm 1, \pm \omega, \pm \omega^2 \}$.  
\end{thm}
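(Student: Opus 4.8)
The plan is to use that $\omega$ is a primitive cube root of unity, so $\omega^3=1$ and $1+\omega+\omega^2=0$, and that the target set $S:=\{0,\pm1,\pm\omega,\pm\omega^2\}$ is exactly $\{0\}$ together with the six units of the Eisenstein ring $\ZZ[\omega]$; in particular $S$ is stable under multiplication by any of $\pm1,\pm\omega,\pm\omega^2$. I would prove the stronger statement that the pair $(\cR_\alpha(\omega),\cS_\alpha(\omega))$ always lies in the refined set
\[
T:=\{(x,y)\in S\times S \mid xy=0 \text{ or } x=y \text{ or } x=-\omega^2 y\}\subseteq S\times S,
\]
which is what makes an inductive argument go through.

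First I would treat $\alpha=\frac rs\in\QQ\cap(1,\infty)$. Writing $\alpha=[[c_1,\dots,c_l]]$, Proposition~\ref{MO20 prop. 4.3 and 4.9} gives $(\cR_\alpha(q),\cS_\alpha(q))^{\mathsf T}=M_q^-(c_1)\cdots M_q^-(c_l)\,(1,0)^{\mathsf T}$. The key observation is that, since $[a]_q=\frac{1-q^a}{1-q}$, the value $[a]_\omega$ depends only on $a\bmod 3$, equalling $0$, $1$, $1+\omega=-\omega^2$ respectively, and likewise $-\omega^{a-1}$ depends only on $a\bmod 3$. Hence each matrix $M_\omega^-(a)$ equals one of only three matrices
\[
A_0=\begin{pmatrix}0&-\omega^2\\1&0\end{pmatrix},\qquad
A_1=\begin{pmatrix}1&-1\\1&0\end{pmatrix},\qquad
A_2=\begin{pmatrix}-\omega^2&-\omega\\1&0\end{pmatrix},
\]
all with entries in $S$. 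Since $(1,0)^{\mathsf T}\in T$, it then suffices to check that $T$ is invariant under each $A_i$; the result for $\alpha>1$ follows by induction on $l$.

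The crux, and the main obstacle, is that the obvious candidate $S\times S$ is \emph{not} invariant: $A_1$ sends $(x,y)^{\mathsf T}$ to $(x-y,x)^{\mathsf T}$, and for units $x,y$ the difference $x-y$ can have norm $3$ (for instance $1-\omega$), hence leave $S$. The point of the refined set $T$ is exactly that the relations $x=y$ or $x=-\omega^2y$ force $x-y\in\{0\}\cup\{\pm1,\pm\omega,\pm\omega^2\}$. Verifying $A_i(T)\subseteq T$ is then a short finite case check over the three defining cells of $T$ and the three matrices; for example on the cell $x=-\omega^2y$ one computes $A_1(x,y)^{\mathsf T}=(\omega y,-\omega^2 y)^{\mathsf T}$, which again satisfies $x'=-\omega^2y'$. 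Discovering $T$ (rather than checking its closure, which is routine once $T$ is known) is the genuine content.

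Finally I would extend to arbitrary $\alpha\in\QQ$. Every $\alpha$ is an integer translate of some $\alpha_0\in(0,1]$, and by \eqref{wakui-1.6-2} and \eqref{wakui-1.6-1} the pair for $\alpha_0=\frac ax$ is the image under $A_1$ of the pair for $\frac{x}{x-a}>1$, so it again lies in $T$ (the case $\alpha_0=1$ giving $(1,1)\in T$ directly). Translation by $n\in\ZZ$ acts on $(\cR_\alpha(\omega),\cS_\alpha(\omega))$ by $(x,y)\mapsto(\omega^n x+[n]_\omega y,\,y)$ by Lemma~\ref{wakui-1.5}, and since $\omega^n$ and $[n]_\omega\in S$ depend only on $n\bmod 3$, one checks on the same three cells that this map also preserves $T$. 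Thus $(\cR_\alpha(\omega),\cS_\alpha(\omega))\in T\subseteq S\times S$ for every $\alpha$, which gives the theorem; coprimality of $\cR_\alpha$ and $\cS_\alpha$ (Proposition~\ref{MO20 prop. 4.3 and 4.9}(1)) is consistent with the two values never vanishing simultaneously.
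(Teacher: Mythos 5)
Your proof is correct and takes essentially the same approach as the paper: your invariant set $T$ is precisely the paper's set $A$ from \eqref{set A} (the cells $xy=0$, $x=y$, $x=-\omega^2y$ are the unit multiples of $(1,0)$ and $(0,1)$, of $(1,1)$, and of $(1,-\omega)$, respectively), and the three specializations $M^-_q(c)|_{q=\omega}$ and the induction on the length of the negative continued fraction are identical. The only cosmetic difference is in the extension to $\alpha\le 1$, where the paper iterates the single shift matrix $\left(\begin{smallmatrix}\omega^2&-\omega^2\\0&1\end{smallmatrix}\right)$ coming from \eqref{right-sift} instead of your combination of Lemma~\ref{wakui-1.6} and integer translations.
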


\begin{proof} 
First, we assume that $\alpha >1$, and write $\alpha=[[c_1, \ldots, c_l]]$. By Proposition~\ref{MO20 prop. 4.3 and 4.9}, we have  
$$
\begin{pmatrix}\cR_\alpha(\omega)\\ \cS_\alpha(\omega) \end{pmatrix}=
\left(  M_q^-(c_1) M_q^-(c_2) \cdots M_q^-(c_l)\right)|_{q=\omega} \begin{pmatrix} 1\\0 \end{pmatrix}.
$$ 
It is easy to check that 
$M_q^-(c)|_{q=\omega}$ for a positive integer $c$ is one of the following forms:
\[ M_q^-(c)|_{q=\omega}=\left\{
\begin{array}{ll}
X:=\begin{pmatrix}0&-\omega^2\\1&0 \end{pmatrix}& \text{if $c \equiv 0 \pmod{3}$,} \\[15pt]
Y:= \begin{pmatrix}1&-1\\1&0 \end{pmatrix} & \text{if $c \equiv 1 \pmod{3}$,} \\[15pt]
Z:=\begin{pmatrix}-\omega^2&-\omega\\1&0 \end{pmatrix} & \text{if $c \equiv 2 \pmod{3}$.}
\end{array}
\right. \]

Let $G$ be the subgroup of $\mathsf{GL}(2,\CC)$ generated by $X,Y$ and $Z$. A direct computation shows that the $X^{12}=Y^6=Z^3=E_2$. 
Set 
\begin{equation}\label{set A}
A:=\left\{ \zeta \! \begin{pmatrix}1\\0\end{pmatrix}, 
\zeta \!  \begin{pmatrix}1\\-\omega\end{pmatrix},  
\zeta \! \begin{pmatrix}1\\1\end{pmatrix}, 
\zeta \! \begin{pmatrix}0\\1 \end{pmatrix}   \, \middle | \, \zeta = \pm 1, \pm \omega, \pm \omega^2
\right\} \subset \CC^2.
\end{equation}
Then, easy calculation shows that $A$ is closed under the natural action of $G$. 
Thus, for any $W \in G$, all entries of $W$, especially $\cR_\alpha(\omega)$ and  $\cS_\alpha(\omega)$ for $\alpha >1$, belong to the set $\{0, \pm 1, \pm \omega, \pm \omega^2 \}$.

Let us consider the case $\alpha \le 1$. By \eqref{right-sift}, we have 
\begin{equation}\label{alpha => alpha+1}
\begin{pmatrix}\cR_\alpha(\omega)\\ \cS_\alpha(\omega)\end{pmatrix} = \begin{pmatrix}\omega^2&-\omega^2\\0&1 \end{pmatrix} \begin{pmatrix}
\cR_{\alpha+1}(\omega)\\ \cS_{\alpha+1}(\omega)\end{pmatrix}. 
 \end{equation}
It is easy to check that the set $A$ is closed under the multiplication of 
$\begin{pmatrix}\omega^2&-\omega^2\\0&1 \end{pmatrix}$, so we can show that $\begin{pmatrix}
\cR_{\alpha+1}(\omega)\\ \cS_{\alpha+1}(\omega)\end{pmatrix} \in A$ implies $\begin{pmatrix}
\cR_{\alpha}(\omega)\\ \cS_{\alpha}(\omega)\end{pmatrix}\in A$.
Since $\alpha+n >1$ for sufficiently large $n$, the desired assertion follows from repeated use of the above implication. 
\end{proof}

Since the leading coefficient of $[n]_q=1+q+\cdots +q^{n-1}$ is 1, when we divide $f(q) \in \ZZ[q]$ by $[n]_q$, the quotient and the remainder belong to $\ZZ[q]$.  
It is clear that if $\cS_{\frac{r}s}(q)$ can be divided by $[3]_q=1+q+q^2$, then  $s=\cS_{\frac{r}s}(1)$ is a multiple of 3. The following states that the converse is also true.   

\begin{cor}\label{3ZZ}
The following assertions hold.
\begin{enumerate}
\item[(1)] If  $s=\cS_{\frac{r}s}(1)$ is a multiple of $3$, $\cS_{\frac{r}s}(q)$ can be divided by  $[3]_q$. 
Moreover, for an irreducible fraction $\frac{r}s$, we have 
$$
\cS_{\frac{r}s}(\omega)=\begin{cases}
0 & \text{if $s \equiv 0 \pmod{3}$,} \\
1, \omega, \omega^2 & \text{if $s \equiv 1 \pmod{3}$,} \\
-1, -\omega, -\omega^2  & \text{if $s \equiv 2 \pmod{3}$.} 
\end{cases}
$$
\item[(2)] Similarly, we have
$$
\cR_{\frac{r}s}(\omega)=\begin{cases}
0 & \text{if $r \equiv 0 \pmod{3}$,} \\
1, \omega, \omega^2 & \text{if $r \equiv 1 \pmod{3}$,} \\
-1, -\omega, -\omega^2  & \text{if $r\equiv 2 \pmod{3}$.} 
\end{cases}
$$
\end{enumerate}
\end{cor}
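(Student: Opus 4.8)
The plan is to leverage Theorem~\ref{mod 3}, which already confines $\cS_{\frac{r}s}(\omega)$ and $\cR_{\frac{r}s}(\omega)$ to the seven-element set $\{0, \pm 1, \pm \omega, \pm \omega^2\}$; the only remaining work is to read off \emph{which} of the seven values occurs from the residue of $s$ (resp.\ $r$) modulo $3$. The bridge I would use is the relation between the two evaluations $q=1$ and $q=\omega$, which is supplied by division against $[3]_q=q^2+q+1$.

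First, since $[3]_q$ is monic, polynomial division gives
\[
\cS_{\frac{r}s}(q)=[3]_q\, g(q)+(a+bq)
\]
with $g(q)\in\ZZ[q]$ and $a,b\in\ZZ$. Evaluating at $q=1$ yields $s=\cS_{\frac{r}s}(1)=3g(1)+(a+b)$, so $a+b\equiv s \pmod 3$, while evaluating at the root $q=\omega$ yields $\cS_{\frac{r}s}(\omega)=a+b\omega$ because $[3]_\omega=0$. Thus the single integer $a+b\bmod 3$ simultaneously records the residue of $s$ and identifies $\cS_{\frac{r}s}(\omega)$.

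Next I would carry out the (routine) bookkeeping. Writing each of the seven admissible values in the normal form $a+b\omega$ via $\omega^2=-1-\omega$, one checks that $a+b\equiv 0$ occurs exactly for the value $0$, that $a+b\equiv 1$ occurs exactly for $\{1,\omega,\omega^2\}$, and that $a+b\equiv 2$ occurs exactly for $\{-1,-\omega,-\omega^2\}$. Combining this trichotomy with $a+b\equiv s\pmod 3$ gives the three cases of part~(1). For the divisibility claim, the case $s\equiv 0$ forces $\cS_{\frac{r}s}(\omega)=a+b\omega=0$, and since $1,\omega$ are linearly independent over $\QQ$ this means $a=b=0$; hence the remainder vanishes and $[3]_q\mid \cS_{\frac{r}s}(q)$.

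For part~(2) the identical argument applies with $\cR$ in place of $\cS$ and with $\cR_{\frac{r}s}(1)=r$ (Proposition~\ref{MO20 prop. 4.3 and 4.9}(2)) in place of $\cS_{\frac{r}s}(1)=s$. The one point requiring care, which I expect to be the main (though minor) obstacle, is that $\cR_{\frac{r}s}(q)$ is only a genuine polynomial for $\frac{r}s\ge 1$ and becomes a Laurent polynomial when $\frac{r}s<0$, so division by $[3]_q$ is not literally available there. This is resolved by working modulo $q^3-1$ instead: using $\omega^3=1$, one reduces any $f\in\ZZ[q,q^{-1}]$ to a representative $c_0+c_1q+c_2q^2$, and a short computation shows $f(\omega)=a+b\omega$ still satisfies $a+b\equiv f(1)\pmod 3$. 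The same trichotomy then finishes the proof in full generality.
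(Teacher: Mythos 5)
Your proposal is correct and follows essentially the same route as the paper: both invoke Theorem~\ref{mod 3} to restrict the remainder of $\cS_{\frac{r}s}(q)$ modulo $[3]_q$ to a linear polynomial whose coefficient sum is $\equiv s \pmod 3$, and then read off the trichotomy; your handling of the Laurent-polynomial issue for $\cR$ (reducing mod $q^3-1$) is equivalent to the paper's device of multiplying by $q^{3n}$, which preserves the values at $q=1$ and $q=\omega$. No gaps.
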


\begin{proof}
(1) Note that $\omega^2=-(w+1)$. 
By Theorem~\ref{mod 3}, the remainder of the polynomial $\cS_{\frac{r}s}(q)$ divided by $[3]_q$ is $aq +b$ for $a, b \in \{0, \pm 1\}$ with $(a,b) \ne (1,-1), (-1,1)$. Since $s=\cS_{\frac{r}s}(1) \equiv a+b \pmod{3}$, the assertion easily follows. 

(2) While $\cR_{\frac{r}s}(q) \in \ZZ[q,q^{-1}]$ has terms of negative degree for $\frac{r}s <0$,  we have $f(q):=q^{3n}\cR_{\frac{r}s}(q) \in \ZZ[q]$ for $n \gg 0$. Since $f(1)=\cR_{\frac{r}s}(1)=r$ and $f(\omega) =\cR_{\frac{r}s}(\omega)$, we can use the argument of the proof of (1).  
\end{proof}

\begin{exmp}
Even if we fix $s$, $\cS_{\frac{r}s}(\omega)$ depends on $r$. For example, we have $\cS_{\frac{12}{11}}(\omega)=-\omega^2$, $\cS_{\frac{13}{11}}(\omega)=-\omega$,   $\cS_{\frac{14}{11}}(\omega)=-\omega^2$, $\cS_{\frac{15}{11}}(\omega)=-1$, and so on.   
\end{exmp}

\begin{cor}\label{R(w)=S(w)}
For an irreducible fraction $\frac{r}s$, 
$s \equiv r \pmod 3$ if and only if $\cR_{\frac{r}s}(\omega)=\cS_{\frac{r}s}(\omega)$. 
\end{cor}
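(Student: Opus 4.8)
The plan is to reduce the statement to the explicit description of the vector $\bigl(\cR_{\frac{r}s}(\omega),\, \cS_{\frac{r}s}(\omega)\bigr)^{\mathsf T}$ obtained inside the proof of Theorem~\ref{mod 3}: for every irreducible fraction it lies in the set $A$ of \eqref{set A}. Combined with Corollary~\ref{3ZZ}, which recovers $r \bmod 3$ from $\cR_{\frac{r}s}(\omega)$ and $s \bmod 3$ from $\cS_{\frac{r}s}(\omega)$, this gives complete control over both the residues and the exact values at $q=\omega$, so the corollary becomes a short finite case check on $A$.

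First I would dispose of the backward implication, which is immediate. By Corollary~\ref{3ZZ} the three sets $\{0\}$, $\{1,\omega,\omega^2\}$, $\{-1,-\omega,-\omega^2\}$ are pairwise disjoint and detect the residues $r\bmod 3$ and $s\bmod 3$. Hence if $\cR_{\frac{r}s}(\omega)=\cS_{\frac{r}s}(\omega)$, the common value lies in a single one of these sets, forcing $r\equiv s\pmod 3$.

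For the forward implication I would split according to whether the entries vanish. If $r\equiv s\equiv 0\pmod 3$ then both values are $0$ by Corollary~\ref{3ZZ} and equality is trivial. Otherwise $r,s\not\equiv 0$, so both $\cR_{\frac{r}s}(\omega)$ and $\cS_{\frac{r}s}(\omega)$ are nonzero; inspecting $A$, the only vectors with both coordinates nonzero are $\zeta(1,-\omega)^{\mathsf T}$ and $\zeta(1,1)^{\mathsf T}$ with $\zeta\in\{\pm1,\pm\omega,\pm\omega^2\}$. Thus the ratio $\cS_{\frac{r}s}(\omega)/\cR_{\frac{r}s}(\omega)$ is either $-\omega$ or $1$. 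A one-line computation shows that multiplication by $-\omega$ carries $\{1,\omega,\omega^2\}$ onto $\{-1,-\omega,-\omega^2\}$ and back; so in the first case $\cR_{\frac{r}s}(\omega)$ and $\cS_{\frac{r}s}(\omega)$ lie in opposite sign classes, whence $r\not\equiv s\pmod 3$ by Corollary~\ref{3ZZ}, contradicting the hypothesis. Therefore the ratio equals $1$, i.e.\ $\cR_{\frac{r}s}(\omega)=\cS_{\frac{r}s}(\omega)$.

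The crux of the argument is this last observation that multiplication by $-\omega$ interchanges the two nonzero sign classes, which is what separates the $\zeta(1,-\omega)^{\mathsf T}$ family from the diagonal family $\zeta(1,1)^{\mathsf T}$; everything else is bookkeeping. The only point genuinely requiring care is that the vector really lies in $A$ for \emph{every} irreducible $\frac{r}s$, including $\frac{r}s\le 1$ and $\frac{r}s<0$ where $\cR_{\frac{r}s}(q)$ is merely a Laurent polynomial. This is exactly what the proof of Theorem~\ref{mod 3} supplies, via the shift \eqref{alpha => alpha+1} together with $\omega^{3n}=1$ as used in the proof of Corollary~\ref{3ZZ}(2), so no real obstacle remains and the proof is a brief verification on $A$.
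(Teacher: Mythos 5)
Your proof is correct, but it follows a genuinely different route from the paper's. The paper's proof is a two-line reduction: by \eqref{right-sift}, $\cR_{\frac{r}{s}-1}(q)=q^{-1}\bigl(\cR_{\frac{r}s}(q)-\cS_{\frac{r}s}(q)\bigr)$, so $\cR_{\frac{r}s}(\omega)=\cS_{\frac{r}s}(\omega)$ holds if and only if $\cR_{\frac{r-s}{s}}(\omega)=0$, which by Corollary~\ref{3ZZ}(2) happens exactly when $3 \mid r-s$. That argument uses only the \emph{stated} corollary, whereas your case analysis needs the stronger joint information that the vector $\bigl(\cR_{\frac{r}s}(\omega),\cS_{\frac{r}s}(\omega)\bigr)^{\mathsf T}$ lies in the invariant set $A$ of \eqref{set A} --- a fact established only inside the proof of Theorem~\ref{mod 3}, not in its statement (the statement constrains each coordinate separately, which would not rule out, say, the pair $(1,\omega)$). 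You correctly flag this dependence, and your verification that the only nonzero-ratio possibilities are $1$ and $-\omega$, with $-\omega$ swapping the two sign classes, is sound; as a small bonus your argument shows that whenever both values are nonzero and unequal the ratio $\cS_{\frac{r}s}(\omega)/\cR_{\frac{r}s}(\omega)$ is exactly $-\omega$. One cosmetic remark: the case $r\equiv s\equiv 0\pmod 3$ in your forward direction is vacuous, since $r$ and $s$ are coprime. Overall, the paper's proof is shorter and more modular; yours is more explicit but leans on internal details of an earlier proof.
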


\begin{proof}
By \eqref{right-sift}, we have
$
\cR_{\frac{r}s-1}(q)=q^{-1}(\cR_{\frac{r}s}(q) -\cS_{\frac{r}s}(q)). 
$
By Corollary~\ref{3ZZ}, we have 
$$
\cR_{\frac{r}s}(\omega)=\cS_{\frac{r}s}(\omega) \quad\Longleftrightarrow\quad \cR_{\frac{r-s}{s}}(\omega)=0 \quad\Longleftrightarrow\quad \text{$r-s$ is a multiple of 3.} $$
So we are done.  
\end{proof}

In the rest of this section, $i$ means $\sqrt{-1}$. 

\begin{prop}\label{S(i)}
We have $\cR_\alpha(i), \cS_\alpha(i) \in \{0, \pm 1, \pm i , \pm (1 + i), \pm(1- i) \}$, and the remainder of $\cS_\alpha(q)$ divided by  $q^2+1$ is $aq+b$ for $a,b \in \{0, \pm 1 \}$. 
\end{prop}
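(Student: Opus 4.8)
The plan is to mirror the proof of Theorem~\ref{mod 3} almost verbatim, replacing residues modulo $3$ by residues modulo $4$ and the sixth roots of unity by the Gaussian units. First I would treat the case $\alpha>1$, write $\alpha=[[c_1,\ldots,c_l]]$, and use Proposition~\ref{MO20 prop. 4.3 and 4.9} to get
\[
\begin{pmatrix}\cR_\alpha(i)\\ \cS_\alpha(i)\end{pmatrix}=\bigl(M^-_q(c_1)\cdots M^-_q(c_l)\bigr)\big|_{q=i}\begin{pmatrix}1\\0\end{pmatrix}.
\]
Using the $4$-periodicity of $[c]_i$ and of $i^{\,c-1}$, the matrix $M^-_q(c)|_{q=i}$ depends only on $c \bmod 4$ and equals one of
\[
\begin{pmatrix}0&i\\1&0\end{pmatrix},\quad \begin{pmatrix}1&-1\\1&0\end{pmatrix},\quad \begin{pmatrix}1+i&-i\\1&0\end{pmatrix},\quad \begin{pmatrix}i&1\\1&0\end{pmatrix}
\]
according as $c\equiv 0,1,2,3\pmod 4$.

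The key step is to produce a finite set of column vectors, stable under all four matrices, whose entries lie in the target set $S:=\{0,\pm1,\pm i,\pm(1+i),\pm(1-i)\}$; this is the analogue of the set $A$ in \eqref{set A}. I observe that $S$ is exactly the set of Gaussian integers $a+bi$ with $a,b\in\{-1,0,1\}$, and that $S$ is closed under multiplication by the units $\{\pm1,\pm i\}$ of $\ZZ[i]$. I would then take
\[
B:=\left\{\zeta v \ \middle|\ \zeta\in\{\pm1,\pm i\},\ v\in\left\{\begin{pmatrix}1\\0\end{pmatrix},\begin{pmatrix}0\\1\end{pmatrix},\begin{pmatrix}1\\1\end{pmatrix},\begin{pmatrix}1+i\\1\end{pmatrix},\begin{pmatrix}1\\-i\end{pmatrix},\begin{pmatrix}1\\1-i\end{pmatrix}\right\}\right\},
\]
a set of $24$ vectors, all of whose entries lie in $S$. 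Since each of the four matrices commutes with scalar multiplication, it suffices to check that each of the six representative vectors is sent into $B$ by each of the four matrices, a finite direct computation. As $\begin{pmatrix}1\\0\end{pmatrix}\in B$, this yields $\cR_\alpha(i),\cS_\alpha(i)\in S$ for all $\alpha>1$.

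For $\alpha\le 1$ I would argue as in \eqref{alpha => alpha+1}: evaluating \eqref{right-sift} at $q=i$ (so $q^{-1}=-i$) gives
\[
\begin{pmatrix}\cR_\alpha(i)\\ \cS_\alpha(i)\end{pmatrix}=\begin{pmatrix}-i&i\\0&1\end{pmatrix}\begin{pmatrix}\cR_{\alpha+1}(i)\\ \cS_{\alpha+1}(i)\end{pmatrix},
\]
and one checks that $B$ is stable under this shift matrix as well, so that iterating until $\alpha+n>1$ settles the general case. For the remainder claim, write $\cS_\alpha(q)=(q^2+1)g(q)+aq+b$ with $a,b\in\ZZ$ (possible since $q^2+1$ is monic and $\cS_\alpha\in\ZZ[q]$). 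Evaluating at $q=i$ gives $ai+b=\cS_\alpha(i)\in S$, and since every element of $S$ has real and imaginary parts in $\{-1,0,1\}$, comparing real and imaginary parts forces $b,a\in\{0,\pm1\}$.

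The main obstacle is purely the bookkeeping in the closure verification: with four residue classes modulo $4$ (rather than three modulo $3$) and six direction vectors, one must correctly identify the stable set $B$ and confirm closure under all four matrices and the shift, a routine but larger computation than in the $\omega$ case. No genuinely new idea beyond Theorem~\ref{mod 3} is required.
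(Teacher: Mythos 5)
Your proposal is correct and follows essentially the same route as the paper's own proof: the same four matrices $M^-_q(c)|_{q=i}$ indexed by $c\bmod 4$, and the same invariant set of $24$ vectors (your representative $\bigl(\begin{smallmatrix}1\\-i\end{smallmatrix}\bigr)$ is the unit multiple $-i\bigl(\begin{smallmatrix}i\\1\end{smallmatrix}\bigr)$ of the paper's), with the same reduction of the case $\alpha\le 1$ via the shift matrix. The only difference is that you spell out the deduction of the remainder claim from the value $\cS_\alpha(i)$, which the paper leaves implicit.
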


\begin{proof}
It suffices to show the first assertion.
The proof is similar to that of Theorem~\ref{mod 3}. First, assume that $\alpha > 1$.  
It is easy to check that $M_q^-(c)|_{q=i}$ is of the form 
\[ M^-_q(c)|_{q=i}=\left\{
\begin{array}{ll}
X_0:=\begin{pmatrix}0&i\\1&0 \end{pmatrix}& \text{if $c \equiv 0 \pmod{4}$,} \\[15pt]
X_1:= \begin{pmatrix}1&-1\\1&0 \end{pmatrix} & \text{if $c \equiv 1 \pmod{4}$,} \\[15pt]
X_2:= \begin{pmatrix}1+i&-i\\1&0 \end{pmatrix} & \text{if $c \equiv 2 \pmod{4}$,} \\[15pt]
X_3:=\begin{pmatrix}i&1\\1&0 \end{pmatrix} & \text{if $c \equiv 3 \pmod{4}$.}
\end{array}
\right. \]

 A direct computation shows that $X_0^8=X_1^6=X_2^4=X_3^{12}=E_2$. 
Let $G^{\prime}$ be the subgroup of $\mathsf{GL}(2,\CC)$ generated by $X_0,X_1,X_2$ and $X_3$.
The set 
$$B:=\left\{ 
\zeta \!  \begin{pmatrix}1\\0\end{pmatrix},
\zeta \!  \begin{pmatrix}0\\1\end{pmatrix}, 
\zeta \!  \begin{pmatrix}1\\1\end{pmatrix},
\zeta \!  \begin{pmatrix}i\\1\end{pmatrix}, 
\zeta \!  \begin{pmatrix}1+i\\1\end{pmatrix}, 
\zeta \!  \begin{pmatrix}1\\1-i\end{pmatrix}
\, \middle | \, \zeta = \pm 1, \pm i
\right\}.$$
is closed under the natural action of $G^{\prime}$. 
Hence all entries of any element  in $G'$ belong to $\{0, \pm 1, \pm i, \pm (1 + i), \pm(1- i) \}$.  
Since $\cR_\alpha(i)$ and $\cS_\alpha(i)$ are  entries of a suitable element of $G'$, we are done. 

For the case $\alpha \le 1$, we can use the same argument as the last part of the proof of Theorem~\ref{mod 3}. 
\end{proof}

\begin{thm}\label{mod 4}
For an irreducible fraction $\frac{r}s$, the following are equivalent. 
\begin{itemize}
\item[(1)] $s$ is a multiple of $4$, 
\item[(2)] $\cS_{\frac{r}s}(q)$ is divisible by $[4]_q=q^3+q^2+q+1$, 
\item[(3)] $\cS_{\frac{r}s}(q)$ is divisible by $q^2+1$. 
\end{itemize}
\end{thm}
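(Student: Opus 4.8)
The plan is to exploit the factorization $[4]_q=(1+q)(q^2+1)$ in $\ZZ[q]$, whose two factors are primitive and coprime in $\QQ[q]$ (their resultant, $(-1)^2+1=2$, is nonzero). By Gauss's lemma, if both $1+q$ and $q^2+1$ divide $\cS_{\frac rs}(q)$ in $\ZZ[q]$, then so does their product $[4]_q$; conversely divisibility by $[4]_q$ forces divisibility by each factor. Since $1+q$ and $q^2+1$ are monic, I may take remainders in $\ZZ[q]$, and $1+q\mid\cS_{\frac rs}$ (resp. $q^2+1\mid\cS_{\frac rs}$) is equivalent to $\cS_{\frac rs}(-1)=0$ (resp. $\cS_{\frac rs}(i)=0$, using that $\cS_{\frac rs}$ has real coefficients, so $\cS_{\frac rs}(i)=0\Leftrightarrow\cS_{\frac rs}(-i)=0$).

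With this in hand the two easy implications are immediate. For $(2)\Rightarrow(3)$ there is nothing to prove since $q^2+1\mid[4]_q$. For $(3)\Rightarrow(1)$, writing $\cS_{\frac rs}=(q^2+1)h$ with $h\in\ZZ[q]$ gives $s=\cS_{\frac rs}(1)=2h(1)$, so $s$ is even; the fact recalled at the beginning of this section then yields $1+q\mid\cS_{\frac rs}$, and coprimality upgrades this to $[4]_q\mid\cS_{\frac rs}$, whence $s=[4]_q(1)\cdot g(1)=4g(1)$ and $4\mid s$.

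The whole content is therefore the remaining implication $(1)\Rightarrow(2)$, which I would reduce to the single statement $4\mid s\Rightarrow\cS_{\frac rs}(i)=0$. To prove this, let $R(q)=cq^2+aq+b$ ($a,b,c\in\ZZ$) be the remainder of $\cS_{\frac rs}(q)$ upon division by $[4]_q$. Evaluating $\cS_{\frac rs}\equiv R\pmod{[4]_q}$ at the three roots $1,-1,i$ of $[4]_q$ gives $s\equiv a+b+c\pmod 4$, $\cS_{\frac rs}(-1)=b+c-a$, and $\cS_{\frac rs}(i)=(b-c)+ai$. Proposition~\ref{S(i)} forces $a\in\{0,\pm1\}$ and $b-c\in\{0,\pm1\}$ (compute $R\bmod(q^2+1)=aq+(b-c)$), while \cite[Proposition~1.8]{MO} together with $4\mid s$ (hence $s$ even) gives $\cS_{\frac rs}(-1)=0$, i.e. $a=b+c$. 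Then $a+b+c=2a\equiv0\pmod 4$ forces $a=0$ and $b+c=0$, so $b-c=-2c\in\{0,\pm1\}$ forces $c=0=b$; hence $\cS_{\frac rs}(i)=0$. Combined with $s$ even (so $1+q\mid\cS_{\frac rs}$) and coprimality, this gives $[4]_q\mid\cS_{\frac rs}$, which is $(2)$.

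The substance of the argument is the interplay between these three evaluations: the congruence $s\equiv a+b+c\pmod 4$ by itself only pins down $s$ modulo $4$, so it is the sharp constraints $a,\,b-c\in\{0,\pm1\}$ from Proposition~\ref{S(i)} and the vanishing $\cS_{\frac rs}(-1)=0$ from \cite[Proposition~1.8]{MO} that close the gap and force $R\equiv0$. I expect the only genuine subtlety to be conceptual rather than computational: the resultant of $1+q$ and $q^2+1$ equals $2$, which is exactly what obstructs a naive Chinese-remainder splitting of $\ZZ[q]/([4]_q)$ into $\ZZ\times\ZZ[i]$ over $\ZZ$, and it is precisely this factor of $2$ that must be absorbed by the special-value constraints above.
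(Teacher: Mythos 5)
Your proof is correct and follows essentially the same route as the paper: both rest on the factorization $[4]_q=(1+q)(q^2+1)$, the constraint on $\cS_{\frac{r}{s}}(i)$ from Proposition~\ref{S(i)}, the vanishing of $\cS_{\frac{r}{s}}(-1)$ coming from \cite[Proposition~1.8]{MO} together with $s$ even, and the congruence $s\equiv \cS_{\frac{r}{s}}(1)\pmod 4$ obtained by evaluating at $q=1$. The only difference is organizational: you solve directly for the coefficients $a,b,c$ of the degree-two remainder modulo $[4]_q$, whereas the paper argues by contradiction, reducing modulo $q^2+1$ and using the map $\ZZ[q]\to\ZZ[q]/(1+q)\times\ZZ[q]/(1+q^2)$ to exclude the remainders $\pm(1+q)$, $\pm(1-q)$; your version reaches the same conclusion slightly more directly.
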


\begin{proof}
(1) $\Rightarrow$ (2) : 
Let $g(q) \in \ZZ[q]$ be the remainder of $\cS_{\frac{r}s}(q)$ divided by $1+q^2$, that is, 
$$\cS_{\frac{r}s}(q)=f(q) \cdot (1+q^2)+g(q) \qquad (f(q) \in \ZZ[q],\, \mathsf{deg}(g) \leq 1).$$
Since $[4]_q=(1+q)(1+q^2)$ and $\cS_{\frac{r}s}(-1)=0$ by \cite[Proposition~1.8]{MO}, it suffices to show that  $\cS_{\frac{r}s}(q)$ is divisible by $1+q^2$ (equivalently, $g(q) =0$). For the contradiction, assume that $g(q) \ne 0$. 
Proposition~\ref{S(i)} states that $g(q) = \pm 1, \pm q,
\pm(1+q),  \pm(1- q)$.  However, since $g(1)=s-2f(1)$ and $s$ is a multiple of 4, $g(1)$ is even, and hence $g(q)\ne \pm 1, \pm q$. Finally, we have $g(q) = \pm(1+q),  \pm(1- q)$.

In what follows, for $f(q) \in \ZZ[q]$, $(f(q))$ denotes the ideal of $\ZZ[q]$ generated by $f(q)$, and $\ZZ[q]/(f(q))$ denotes the quotient ring. For the canonical surjections  $\pi_1 : \ZZ[q]  \to \ZZ[q]/(1+q)$ and  $\pi_2 : \ZZ[q] \to \ZZ[q]/(1+q^2)$ 
(if there is no danger of confusion, we denote $\pi_i(f(q))$ by $\overline{f(q)}$), 
consider the ring homomorphism
$$
\phi: \mathbb{Z}[q] \ni f(q) \longmapsto (\pi_1(f(q)), \pi_2(f(q)))  \in \left(\mathbb{Z}[q]/\left(1+q\right)\right) \times \left(\mathbb{Z}[q]/\left(1+q^2\right)\right).
$$
Since $\ZZ[q]$ is a UFD, and $1+q$ and $1+q^2$ are coprime, we have $\mathsf{ker}(\phi) =([4]_q)$.  
In the present situation, we have  $$\phi(\cS_{\frac{r}s}(q))=( \overline{0}, \overline{g(q)}).$$ 
Recall that  $g(q) = \pm(1+q),  \pm(1- q)$, but we have 
$$\phi(\pm (1+q))=( \overline{0}, \pm \overline{(1+q)}) \quad \text{or} \quad \phi(\pm (q+q^2))=( \overline{0}, \mp\overline{(1-q)}).$$
Hence, we have either 
$$\pm (1+q) -\cS_{\frac{r}s}(q) \in ([4]_q) \quad \text{or} \quad \pm (q+q^2) -\cS_{\frac{r}s}(q) \in ([4]_q).$$
In both cases, $\pm 2 -\cS_{\frac{r}s}(1) \in 4\ZZ$, and it means that $\cS_{\frac{r}s}(1) \equiv 2 \pmod{4}$. 
It contradicts the assumption that $\cS_{\frac{r}s}(1) \in 4\ZZ$. 

(2) $\Rightarrow$ (3) : Obvious. 

(3) $\Rightarrow$ (2) :  If $\cS_{\frac{r}s}(q)$ is divisible by $1+q^2$, then there is some $f(q) \in \ZZ[q]$ such that $\cS_{\frac{r}s}(q)=(1+q^2)f(q)$. It follows that $s = \cS_{\frac{r}s}(1)=2f(1)$ is even, and hence $\cS_{\frac{r}s}(q)$ is also divisible by $1+q$. Since $[4]_q=(1+q)(1+q^2)$, the assertion follows. 
\end{proof}

The next result can be proved by an argument similar to the corresponding results for $q=\omega$. 

\begin{cor}\label{mod 4-i}
The following assertions hold.
\begin{enumerate}
\item[(1)] We have
$$\cS_{\frac{r}s}(i) =\begin{cases}
0 &\text{if $s \equiv 0 \pmod{4}$},\\
\pm(1+i), \pm(1-i) &  \text{if $s \equiv 2 \pmod{4}$},\\
\pm 1, \pm i &  \text{if $s \equiv 1 \pmod{2}$},
\end{cases}$$
and 
$$\cR_{\frac{r}s}(i) =\begin{cases}
0 & \text{if $r \equiv 0 \pmod{4}$},\\
\pm(1+i), \pm(1-i) &  \text{if $r \equiv 2 \pmod{4}$},\\
\pm 1, \pm i &  \text{if $r \equiv 1 \pmod{2}$.}
\end{cases}$$
\item[(2)] For an irreducible fraction $\frac{r}{s}$, we have  $s\equiv r\pmod4$ if and only if $\cR_{\frac{r}s}(i)=\cS_{\frac{r}s}(i)$.  
\end{enumerate}
\end{cor}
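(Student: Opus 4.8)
The plan is to mirror the proofs of Corollaries~\ref{3ZZ} and \ref{R(w)=S(w)}, replacing the modulus $3$ and the cube root $\omega$ by the modulus $4$ and $i$, with Proposition~\ref{S(i)} and Theorem~\ref{mod 4} playing the roles of Theorem~\ref{mod 3}. I would first dispose of the $\cS$-statement in part~(1). By Proposition~\ref{S(i)}, writing $\cS_{\frac rs}(q) = (q^2+1)f(q) + g(q)$ with $g(q)=aq+b$ and $a,b\in\{0,\pm1\}$, we get $\cS_{\frac rs}(i)=g(i)\in\{0,\pm1,\pm i,\pm(1+i),\pm(1-i)\}$, and these nine values are in bijection with the nine pairs $(a,b)$. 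Evaluating at $q=1$ gives $s=2f(1)+g(1)$, so $s\equiv a+b \pmod 2$; inspecting the list shows $a+b$ is odd exactly when $g(i)\in\{\pm1,\pm i\}$ and even exactly when $g(i)\in\{0,\pm(1+i),\pm(1-i)\}$. This already separates the case $s$ odd (yielding $\cS_{\frac rs}(i)\in\{\pm1,\pm i\}$) from $s$ even. To split the even case I would invoke Theorem~\ref{mod 4}: $4\mid s$ iff $(q^2+1)\mid \cS_{\frac rs}(q)$ iff $g=0$ iff $\cS_{\frac rs}(i)=0$; hence $s\equiv 2\pmod 4$ forces $\cS_{\frac rs}(i)\in\{\pm(1+i),\pm(1-i)\}$, completing the $\cS$-statement.

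For the $\cR$-statement I would imitate the clearing device of Corollary~\ref{3ZZ}(2): set $f(q):=q^{4n}\cR_{\frac rs}(q)\in\ZZ[q]$ for $n\gg 0$, so that $f(1)=r$ and $f(i)=i^{4n}\cR_{\frac rs}(i)=\cR_{\frac rs}(i)$. The same parity argument applied to $f$ separates $r$ odd from $r$ even. The one genuinely new ingredient is the $\cR$-analogue of Theorem~\ref{mod 4}, namely that $(q^2+1)\mid f$ (equivalently $\cR_{\frac rs}(i)=0$) iff $4\mid r$. For $\frac rs>1$ this is immediate from \eqref{wakui-1.6-2}, which gives $\cR_{\frac rs}(q)=\cS_{\frac{r-s}r}(q)$ with $\frac{r-s}r$ irreducible of denominator $r$, so the entire $\cR$-classification by $r\bmod 4$ drops out of the $\cS$-statement just proved. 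In general I would re-run the ideal-theoretic argument of Theorem~\ref{mod 4} verbatim for $\cR$, the only external input being that $\cR_{\frac rs}(-1)=0$ iff $r$ is even (the numerator analogue of the evenness fact used there), and conclude as before.

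Finally, part~(2) follows exactly as Corollary~\ref{R(w)=S(w)} follows from Corollary~\ref{3ZZ}. Applying \eqref{right-sift} to $\alpha=\frac{r-s}s$ gives $\cR_{\frac{r-s}s}(q)=q^{-1}(\cR_{\frac rs}(q)-\cS_{\frac rs}(q))$, and since $i^{-1}\neq 0$,
\[
\cR_{\frac rs}(i)=\cS_{\frac rs}(i)\iff \cR_{\frac{r-s}s}(i)=0 \iff 4\mid(r-s)\iff r\equiv s\pmod 4,
\]
where the middle equivalence is the $\cR$-part of~(1) applied to the irreducible fraction $\frac{r-s}s$, whose numerator is $r-s$.

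The main obstacle I anticipate is exactly what makes $q=i$ harder than $q=\omega$: modulo $3$ the evaluation at $q=1$ already pins down the residue class, so the remainder $aq+b$ determines $s\bmod 3$ outright, whereas modulo $4$ the value at $q=1$ recovers only $s\bmod 2$. Distinguishing $4\mid s$ from $s\equiv 2\pmod 4$ therefore genuinely requires the divisibility Theorem~\ref{mod 4}, and transferring it to $\cR$ for \emph{all} irreducible $\frac rs$ (not merely $\frac rs>1$, which is forced because the fraction $\frac{r-s}s$ appearing in part~(2) may be $\le 1$) is the one step demanding care rather than routine mimicry.
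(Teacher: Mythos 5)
Your proposal is correct and follows exactly the route the paper intends: the paper gives no written proof here beyond the remark that the argument is ``similar to the corresponding results for $q=\omega$,'' i.e.\ mimic Corollaries~\ref{3ZZ} and \ref{R(w)=S(w)} using Proposition~\ref{S(i)} in place of Theorem~\ref{mod 3}. You correctly identify and supply the one ingredient that is not automatic from the $\omega$-case --- namely that evaluation at $q=1$ only sees $s\bmod 2$, so separating $s\equiv 0$ from $s\equiv 2\pmod 4$ requires Theorem~\ref{mod 4} (and its $\cR$-analogue, which your reduction via \eqref{wakui-1.6-2} together with the $q^{4n}\cR_{\frac{r}{s}}(q)$ clearing device handles for all irreducible fractions, as needed when part~(2) is applied to $\frac{r-s}{s}$).
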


\begin{exmp}
It is clear that the analog of Corollaries~\ref{3ZZ} and \ref{mod 4-i} does not hold for primitive $n$-th roots of unity with $n \ge 5$. In fact, since $\cS_{\frac{7}5}(q)=q^3+2q^2+q+1$, we have $\cS_{\frac{7}5}(\zeta)\ne 0$, where $\zeta$ is a primitive 5th root of unity (i.e., a root of $q^4+q^3+q^2+q+1$). Moreover, using a computer system, we see that $\cS_{\frac{37}{35}}(q)$ is irreducible over $\QQ$, while 35 is a composite number.
\end{exmp}

\begin{conj}\label{irreducibility}
If $p$ is a prime integer, then $\cS_{\frac{a}{p}}(q)$ is irreducible over $\QQ$.  
\end{conj}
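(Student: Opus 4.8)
The plan is to combine Gauss's lemma with the positivity of the coefficients of $\cS_{\frac ap}(q)$ and the primality of $\cS_{\frac ap}(1)=p$, thereby reducing irreducibility to a statement about the location of the complex roots. Suppose $\cS_{\frac ap}(q)=g(q)h(q)$ is a nontrivial factorization. Since $\cS_{\frac ap}(q)$ is monic with constant term $1$ (it is a closure polynomial) and primitive, Gauss's lemma lets us take $g,h\in\ZZ[q]$ monic of positive degree. Because every coefficient of $\cS_{\frac ap}(q)$ is positive, $\cS_{\frac ap}(x)\ge 1$ for all $x\ge 0$, so neither factor has a nonnegative real root; as $g(0)h(0)=\cS_{\frac ap}(0)=1$ forces $g(0)\in\{\pm1\}$ and $g$ is monic (hence $g(x)\to+\infty$), the intermediate value theorem rules out $g(0)=-1$ and gives $g(0)=h(0)=1$. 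Writing $g(q)=\prod_i(q-r_i)$, the real roots are all negative (each contributing a factor $1-r_i>1$) while the nonreal ones occur in conjugate pairs (each contributing $|1-\rho|^2>0$), so $g(1)>0$, and likewise $h(1)>0$. As $g(1)h(1)=\cS_{\frac ap}(1)=p$ is prime, one of them, say $g(1)$, equals $1$.

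Next I would dispose of the case $a\equiv\pm1\pmod p$: there $\cS_{\frac ap}(q)=[p]_q=(q^p-1)/(q-1)$ by Corollary~\ref{power of p}, the $p$-th cyclotomic polynomial, which is irreducible over $\QQ$ by the classical argument (Eisenstein applied after $q\mapsto q+1$). So assume $a\not\equiv\pm1\pmod p$, equivalently (Theorem~\ref{palindromic}) that $\cS_{\frac ap}(q)$ is not palindromic. The heart of the proof would then be the following root-localization lemma: \emph{every complex root $r$ of $\cS_{\frac ap}(q)$ satisfies $|r-1|>1$.} Granting this, $|g(1)|=\prod_i|1-r_i|>1$ whenever $\deg g\ge1$, contradicting $g(1)=1$; hence no nontrivial factorization exists.

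The main obstacle is establishing this lemma, i.e.\ that $\cS_{\frac ap}(q)$ has no zero in the closed disk $\{\,|q-1|\le1\,\}$. It cannot follow formally from the general $q$-continuant recursion, since it must be sensitive to the hypothesis $a\not\equiv\pm1$: the palindromic polynomial $[p]_q$ has roots $e^{2\pi ik/p}$ with $|e^{2\pi ik/p}-1|=2\sin(\pi k/p)<1$ for $p\ge7$, so the cyclotomic case genuinely violates the lemma and is exactly why it must be treated separately. A naive Eisenstein attempt also fails: shifting $q\mapsto q+1$ produces constant term $p$, but the linear coefficient $\cS_{\frac ap}'(1)$ is in general not divisible by $p$ (for example $\cS_{\frac 27}'(1)=13$ for $p=7$), so $\cS_{\frac ap}(q+1)$ is not Eisenstein at $p$.

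The route I find most promising is an inductive analysis of the $q$-continuants $\cS^{(k)}(q)$ attached to the truncations $[[c_1,\dots,c_k]]$ of the negative continued fraction $\frac ap=[[c_1,\dots,c_l]]$, which satisfy a three-term recurrence coming from the product $M_q^-(c_1)\cdots M_q^-(c_l)$. The aim would be to control the ratios $\cS^{(k)}(q)/\cS^{(k-1)}(q)$ for $q$ in the disk $|q-1|\le1$, confining them to a region that precludes a common zero and hence a zero of $\cS^{(l)}(q)=\cS_{\frac ap}(q)$; the nonpalindromicity of the sequence $(c_1,\dots,c_l)$ would have to enter the induction in an essential way, since the invariant must break down precisely in the excluded cyclotomic case. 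Making such a localization uniform over all primes $p$ and all residues $a$ is the crux of the matter, and I expect the lemma to be essentially as deep as the conjecture itself, requiring input beyond the combinatorial machinery developed above.
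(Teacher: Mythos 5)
First, note that the statement you were asked to prove is stated in the paper only as a conjecture (Conjecture~\ref{irreducibility}): the authors do not prove it, but verify it by computer for primes up to $739$ and establish the partial result that any hypothetical factor must have degree at least $7$. Your opening reduction --- both factors may be taken monic in $\ZZ[q]$ with constant term $1$, positive at $q=1$, and since $g(1)h(1)=p$ is prime one factor satisfies $g(1)=1$ --- is sound and is in fact exactly how the paper's partial result begins. The divergence is in what comes next: the paper constrains the putative factor $g$ with $g(1)=1$ by evaluating at $q=-1,\ i,\ \omega$ (using Theorem~\ref{mod 3}, Proposition~\ref{S(i)}, and Corollary~\ref{3ZZ}) and ruling out all residues modulo $q(q+1)(q^2+1)(q^2+q+1)$ in degree $\le 6$, whereas you propose to rule out \emph{all} factors at once via a root-localization lemma.

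That lemma --- every root $r$ of $\cS_{\frac ap}(q)$ satisfies $|r-1|>1$ when $a\not\equiv\pm1\pmod p$ --- is the entire content of your argument, you correctly identify it as unproven, and unfortunately it is false. Take $p=17$, $a=2$, so $\cS_{\frac{2}{17}}(q)=q^9+2q^8+\cdots+2q^2+q+1$ (the polynomial $B$ in the paper's first example), which is not palindromic since $2^2\not\equiv 1\pmod{17}$. From $(1-q)\cS_{\frac{2}{17}}(q)=1+q^2-q^9-q^{10}$, the roots of $\cS_{\frac{2}{17}}$ are the solutions of $q^9(q+1)=q^2+1$ other than $q=1$; a standard argument-matching estimate (and direct numerical check) locates a root near $r\approx 0.975\,e^{0.735 i}$, for which $|r-1|\approx 0.71<1$. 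More generally, for $\cS_{\frac{2}{2m+1}}(q)=[m{+}1]_q+q^2[m]_q$ the roots cluster near the unit circle at angles roughly $2\pi k/(m+\tfrac12)$, so for large $p$ many roots lie well inside the disk $|q-1|\le 1$; non-palindromicity does not rescue the lemma. Since the only mechanism your proof offers for contradicting $g(1)=1$ is the inequality $\prod_i|1-r_i|>1$ over the roots of $g$, and that inequality cannot be certified root-by-root, the argument collapses at its central step. (A product over a carefully chosen subset of roots could still exceed $1$, but you give no way to control which roots a factor would collect, and the paper gives no such argument either --- the conjecture remains open.)
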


Using the computer program Maple, we checked the conjecture for prime numbers up to 
739. The following is another piece of evidence.

\begin{thm}
Let $p$ be a prime integer. 
If  $\cS_{\frac{a}p}(q)$ is reducible in $\QQ[q]$ (i.e., Conjecture~\ref{irreducibility} does not hold), all of its factors have degree at least $7$.  
\end{thm}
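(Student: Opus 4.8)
The plan is to control the factorization through the special values computed in this section, viewing an irreducible factor as a divisor in the rings $\ZZ[\omega]$ and $\ZZ[i]$. I may assume $p\geq 5$, since for $p=2,3$ the polynomial $\cS_{\frac{a}{p}}(q)$ has degree at most $2$ and is irreducible. As $\cS_{\frac{a}{p}}(q)$ is monic with constant term $1$, Gauss's lemma lets me take any irreducible factor $f$ to be monic in $\ZZ[q]$. First I would record the evaluations $\cS_{\frac{a}{p}}(1)=p$, $\cS_{\frac{a}{p}}(-1)=\pm1$ (by \cite[Proposition~1.8]{MO}, using that $p$ is odd), $\cS_{\frac{a}{p}}(\omega)\in\{\pm1,\pm\omega,\pm\omega^2\}$ (Corollary~\ref{3ZZ}, nonzero since $3\nmid p$), and $\cS_{\frac{a}{p}}(i)\in\{\pm1,\pm i\}$ (Corollary~\ref{mod 4-i}, since $p$ is odd). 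The decisive point is that $\cS_{\frac{a}{p}}(\omega)$ is a unit of $\ZZ[\omega]$ and $\cS_{\frac{a}{p}}(i)$ is a unit of $\ZZ[i]$. Finally, all coefficients of $\cS_{\frac{a}{p}}(q)$ are positive, so it has no positive real root.

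Next I would transfer these constraints to $f$. Since $f\mid\cS_{\frac{a}{p}}$ and the leading coefficient of $f$ is positive, $f$ has no positive real root, which forces $f(0)=1$ and $f(1)>0$; then $f(1)\mid p$ gives $f(1)\in\{1,p\}$, while $f(-1)=\pm1$, and $f(\omega)$, $f(i)$ each divide a unit, hence are units: $N_{\ZZ[\omega]}(f(\omega))=1$ and $N_{\ZZ[i]}(f(i))=1$. Writing $f(q)=q^{d}+c_{d-1}q^{d-1}+\cdots+c_1q+1$ with $d=\deg f\leq 6$, reducing the powers of $\omega$ and $i$ modulo $3$ and $4$ expresses $f(\omega)=A+B\omega$ and $f(i)=C+Di$, where $A,B,C,D$ are explicit integer linear forms in the $c_k$. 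The unit conditions force $A,B,C,D\in\{0,\pm1\}$, together with $f(-1)=\pm1$; so the interior coefficients $c_1,\ldots,c_{d-1}$ satisfy a linear system whose right-hand side ranges over a bounded set. I would then check that this system has full column rank for each $d\leq 6$ (for $d=6$ the relevant $5\times5$ matrix has determinant $-2$), which pins $c_1,\ldots,c_{d-1}$, and hence the list of candidate factors $f$, to a finite set independent of $p$.

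With this finite, $p$-independent list in hand, I would eliminate every candidate. If $f(1)=1$, a direct check of the finitely many candidates of each degree $1\leq d\leq 6$ shows that each already violates one of the conditions $A,B,C,D\in\{0,\pm1\}$ or $f(-1)=\pm1$ (for instance, in degree $3$ the only candidates $q^3-q+1$ and $q^3-q^2+1$ have $N_{\ZZ[\omega]}(f(\omega))=7$); thus no factor with $f(1)=1$ exists, which in particular rules out every non--prime-power cyclotomic factor of degree $\leq 6$ (such as $\Phi_6,\Phi_{10},\Phi_{12},\Phi_{14},\Phi_{18}$, each failing the $q=-1$, $\omega$, or $i$ test). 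If instead $f(1)=p$, then $p=f(1)$ is bounded by the explicit maximum of $f(1)$ over the finite candidate list, so only finitely many primes $p$ can arise (this is also where a prime-power cyclotomic factor $\Phi_{p^k}$ would sit, with $\phi(p^k)\geq 7$ for $p\geq 11$, and $\Phi_5=[5]_q$, $\Phi_7=[7]_q$ occurring only as the whole of $\cS_{\frac{a}{p}}$); for each such small prime one verifies directly, as already done in this paper for all $p\leq 739$, that $\cS_{\frac{a}{p}}(q)$ is irreducible, so $f$ cannot be a proper factor. Either way one reaches a contradiction, proving that all factors have degree at least $7$.

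The conceptual heart of the argument is the observation that $\cS_{\frac{a}{p}}(\omega)$ and $\cS_{\frac{a}{p}}(i)$ are units, which rigidly constrains every factor at $\omega$ and $i$. The main obstacle I anticipate is the finite but delicate bookkeeping of the last two paragraphs: confirming the full-rank (hence boundedness) claim for each degree $d\leq 6$, and then certifying that the resulting finite list contains no genuine factor --- equivalently, that the handful of small primes produced in the $f(1)=p$ case all give irreducible $\cS_{\frac{a}{p}}(q)$.
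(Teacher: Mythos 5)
Your proposal is correct and follows essentially the same route as the paper: both exploit that $\cS_{\frac{a}{p}}(-1)$, $\cS_{\frac{a}{p}}(i)$, $\cS_{\frac{a}{p}}(\omega)$ are units (hence every factor is a unit at these points), that $f(1)\in\{1,p\}$ with leading coefficient and constant term equal to $1$, and then reduce degree $\leq 6$ to a finite check plus the computer verification for the small primes arising in the $f(1)=p$ case. The only difference is organizational: the paper parametrizes the candidate factors via an explicit CRT basis modulo $q(q+1)(q^2+1)(q^2+q+1)$ and extracts the bound $p\leq 41$, whereas you solve the equivalent full-rank linear system in the interior coefficients.
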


\begin{proof}
Consider the factorization  
$$\cS_{\frac{a}p}(q)=\prod_{j=1}^k f_j(q)$$ in the polynomial ring $\QQ[q]$.
It is a classical result that we can take $f_j(q)$  from $\ZZ[q]$ for all $j$. 
Assume that $k \ge 2$. Since $f_j(1) \in \ZZ$ for all $j$ and $p=\cS_{\frac{a}p}(1) =\prod_{j=1}^k f_j(1)$ is a prime number, we may assume that $f_1(1)=p$ and $f_j(1)=1$ for all $j \geq 2$. 

Since both the leading coefficient and constant term of $\cS_{\frac{a}p}(q)$ are 1, those of $f_j(q)$ are $\pm 1$.  
Since all coefficients of $\cS_{\frac{a}p}(q)$ are positive, if $q=\alpha$ is  a {\it real} root of the equation $\cS_{\frac{a}p}(q)=0$ then $\alpha < 0$. Clearly, the same is true for each $f_j(q)$, so both the leading coefficient and constant term of $f_j(q)$ are 1 (note that $f_j(1) >0$ now).

If $p=2,3$, the assertion is clear. So we may assume that $p \ge 5$. Since $p=\cS_{\frac{a}p}(1)$ is odd, $\cS_{\frac{a}p}(-1)=\prod_{j=1}^k f_j(-1)=\pm 1$. Since $f_j(-1) \in \ZZ$ for all $j$, we have $f_j(-1) =\pm 1$, and hence the remainder of $f_j(q)$  divided by $q+1$ is $\pm 1$. 
Similarly,  we have $\cS_{\frac{a}p}(i)=\prod_{j=1}^k f_j(i)=\pm 1, \pm i$ by Corollary~\ref{mod 4-i}. 
Since $f_j(i) \in \ZZ[i]$ for all $j$, we have $f_j(i) =\pm 1, \pm i$, and the remainder of $f_j(q)$  divided by $q^2+1$ is $\pm 1, \pm q$. Since $p=\cS_{\frac{a}p}(1)$ is not a multiple of 3, $\cS_{\frac{a}p}(\omega)=\prod_{j=1}^k f_j(\omega)=\pm 1, \pm \omega, \pm \omega^2$. 
Since $f_j(\omega) \in \ZZ[\omega]$ for all $j$, we have $f_j(\omega) =\pm 1, \pm \omega, \pm \omega^2$ by Corollary~\ref{3ZZ}, and the remainder of $f_j(q)$  divided by $q^2+q+1$ is $\pm 1, \pm q, \pm (1+q)$.

Set $g(q)=q(q+1)(q^2+1)(q^2+q+1)$, and consider the natural ring homomorphism 
$$\Psi: \ZZ{[q]}/(g(q)) \too \ZZ[q]/(q) \times  \ZZ[q]/(q+1) \times  \ZZ[q]/(q^2+1) \times  \ZZ[q]/(q^2+q+1).$$
Since $\ZZ[q]$ is a UFD, $\Psi$ is injective. 
Let us find polynomials in $\ZZ[q]$ whose images under $\Psi$ are characteristic.

For $t(q) := (q+1)(q^2+1)(q^2+q+1)$, we have $\Psi(t(q))=(\one, \zero, \zero, \zero)$ and $t(1)=12$. For
$$
\begin{array}{ll}
u_1(q):=q(q^2+q+1), & u_2(q):=q^2(q^2+q+1), \\ 
u_3(q):=q^3(q^2+q+1), & u_4(q):=q(q^2+q+1)^2,
\end{array}$$
we have 
$$
\begin{array}{ll}
\Psi(u_1(q))=(\zero, -\one, -\one, \zero), & \Psi(u_2(q))=(\zero, \one, -\qb, \zero),\\ [5pt]
\Psi(u_3(q))=(\zero, -\one, \one, \zero),  &  \Psi(u_4(q))=(\zero, -\one, -\qb, \zero),
\end{array}$$ and $u_k(1)=3$ for $k= 1,2,3$, $u_4(1)=9$. 
For 
$$v_1(q):=q(q+1)(q^2+1),  \quad v_2(q):=q(q+1)^2(q^2+1),  \quad v_3(q):=q^2(q+1)(q^2+1),$$
we have $$\Psi(v_1(q))=(\zero, \zero, \zero, \qb), \quad \Psi(v_2(q))=(\zero, \zero, \zero, -\one), \quad \Psi(v_3(q))=(\zero, \zero, \zero, -\one-\qb)$$  and  $v_1(1)=v_3(1)=4$, $v_2(1)=8$.

The possible values of $\Psi(f(q))$ have been determined above, and the leading coefficient of $f_j(q)$ is 1. Hence, if $\mathsf{deg}~f_j(q) \le 6$, we have 
$$f_j(q)= c_1 g(q)+ t(q) + c_2 u_k(q)+ c_3 v_l(q)$$ 
for some $c_1 =0,1$, $c_2, c_3 =\pm 1$, $k=1,\ldots, 4$ and $l=1,2,3$.
If $j \ge 2$, $f_j(q)$ must satisfy the following conditions:  
\begin{itemize}
\item $f_j(q) \ne 1$ and $f_j(1)=1$. 
\item The leading coefficient is 1.
\end{itemize}
However, easy calculation shows that no choice of $c_1, \ldots, c_3, j,k$ satisfies these conditions. 
Finally, we consider $f_1(q)$. We have
$$p =f_1(1) \le g(1)+t(1) + u_k(1)+ v_l(1)\le 12+12+9+8=41.$$
However, Conjecture~\ref{irreducibility} has been checked in this range by using Maple. 
\end{proof}

\section{Application to Jones polynomials of rational knots} \label{sec-8}
Using the results in the previous section, we study the special values of the Jones polynomial $V_\alpha(t)$ and the normalized one $J_\alpha(q)$ of a rational link $L(\alpha)$.  

For a general link $L$, it is a classical fact 
that $$V_L(1)=(-2)^{c(L)-1},$$
where $c(L)$ is the number of the components of $L$. 
On the other hand, for an irreducible fraction $\frac{r}s$, it is well-known that $c(L(\frac{r}s))=1,2$, and $c(L(\frac{r}s))=1$ if and only if $r$ is odd. 
Hence we have 
$$
V_{\frac{r}s}(1)=
\begin{cases}
-2 &\text{if $r$ is even},\\
1 & \text{if $r$ is odd}. 
\end{cases}
$$
We can explain this equation using $q$-deformed rationals. 

Recall the equation~\eqref{eq2-6}, which states that the normalized Jones polynomial $J_\alpha(q)$ of a rational link $L(\alpha)$ can be computed by the following formula:
\[
J_\alpha(q)=q \cdot \cR_\alpha(q)+(1-q) \cdot \cS_\alpha(q).
\]

By an argument similar to the previous section, we can show that 
$$\begin{pmatrix}\cR_{\frac{r}s}(-1)\\\cS_{\frac{r}s}(-1)\end{pmatrix}=\pm \begin{pmatrix}1\\0\end{pmatrix}, \pm  \begin{pmatrix} 0 \\1\end{pmatrix}, \pm \begin{pmatrix}1\\1\end{pmatrix}$$
(this is a refinement of \cite[Proposition~1.8]{MO}). Hence we have 
\begin{equation}\label{V(1)}
|V_{\frac{r}s}(1)|=|J_{\frac{r}s}(-1)|=
\begin{cases}
2 & \text{if $r$ is even},\\
1 & \text{if $r$ is odd}. 
\end{cases}
\end{equation}

Next, we will consider the special values of $J_\alpha(q)$ at $q=i, \omega, -\omega$. Many parts of the following results should be well-known, but we are interested in the relation to $q$-deformed rationals.

\begin{thm}\label{J(w)}
For an irreducible fraction $\frac{r}s >1$, we have 
$$
J_{\frac{r}{s}}(\omega) \in \{\pm 1, \pm \omega, \pm \omega^2 \},
$$
if $r$ is not a multiple of $3$, and 
$$
J_{\frac{r}{s}}(\omega) \in \{\pm (1-\omega), \pm \omega(1-\omega), \pm \omega^2(1-\omega) \},  
$$
if $r$ is a multiple of $3$. In particular, 
\begin{equation}\label{|J(w)|}
|V_{\frac{r}{s}}(-\omega) |=|J_{\frac{r}{s}}(\omega) |=\begin{cases}
\sqrt{3} & \text{if $r$ is a multiple of $3$,} \\
1 & \text{otherwise. }
\end{cases}
\end{equation}
\end{thm}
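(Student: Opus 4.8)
The plan is to evaluate the formula~\eqref{eq2-6} at $q=\omega$ and to feed into it the explicit description of the vector $\binom{\cR_\alpha(\omega)}{\cS_\alpha(\omega)}$ that is produced inside the proof of Theorem~\ref{mod 3}. Writing $\alpha=\frac{r}{s}$, the formula~\eqref{eq2-6} reads
\[
J_{\frac{r}{s}}(\omega)=\omega\,\cR_{\frac{r}{s}}(\omega)+(1-\omega)\,\cS_{\frac{r}{s}}(\omega)=(\omega,\ 1-\omega)\binom{\cR_{\frac{r}{s}}(\omega)}{\cS_{\frac{r}{s}}(\omega)},
\]
so that $J_{\frac{r}{s}}(\omega)$ is simply the image of this column vector under the fixed linear functional $(\omega,\ 1-\omega)$. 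The first thing I would record is that, by the proof of Theorem~\ref{mod 3}, the column vector lies in the finite set $A$ of~\eqref{set A}; hence it equals $\zeta$ times one of the four base vectors $\binom{1}{0},\binom{1}{-\omega},\binom{1}{1},\binom{0}{1}$ for some unit $\zeta\in\{\pm1,\pm\omega,\pm\omega^2\}$ of $\ZZ[\omega]$.

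Next I would apply the functional to each base vector, obtaining respectively $\omega,\ \omega^2,\ 1,\ 1-\omega$ (the nontrivial computations being $(1-\omega)(-\omega)=\omega^2-\omega$ and $\omega+(1-\omega)=1$). Consequently $J_{\frac{r}{s}}(\omega)=\zeta\,u$ with $u\in\{\omega,\omega^2,1,1-\omega\}$. In the first three cases $u$ is a unit, so $J_{\frac{r}{s}}(\omega)$ ranges over $\{\pm1,\pm\omega,\pm\omega^2\}$; in the last case it ranges over $\{\pm(1-\omega),\pm\omega(1-\omega),\pm\omega^2(1-\omega)\}$. These are exactly the two sets in the statement, so all that remains is to decide which alternative occurs.

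The decisive point is that $\binom{0}{1}$ is the only base vector with vanishing first coordinate, so the column vector is a multiple of $\binom{0}{1}$ precisely when $\cR_{\frac{r}{s}}(\omega)=0$, which by Corollary~\ref{3ZZ}(2) means $r\equiv0\pmod3$. This separates the two cases exactly as required. For the ``in particular'' part I would use that every unit of $\ZZ[\omega]$ has modulus $1$ while $|1-\omega|^2=(1-\omega)(1-\omega^2)=3$, giving $|J_{\frac{r}{s}}(\omega)|=1$ when $3\nmid r$ and $\sqrt3$ when $3\mid r$; finally $|V_{\frac{r}{s}}(-\omega)|=|J_{\frac{r}{s}}(\omega)|$ follows from the normalization~\eqref{def normalized Jones}, since at $q=\omega$ one has $t=-\omega^{-1}$ with $|t|=1$ and $|V_{\frac{r}{s}}(-\omega^{-1})|=|V_{\frac{r}{s}}(-\omega)|$ because $V_{\frac{r}{s}}$ has real coefficients and $\overline{-\omega^{-1}}=-\omega$.

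The step I regard as the real content, rather than a true obstacle, is to invoke the \emph{vector}-level output of the proof of Theorem~\ref{mod 3}---membership of $\binom{\cR_{\frac{r}{s}}(\omega)}{\cS_{\frac{r}{s}}(\omega)}$ in $A$---in place of the weaker entrywise conclusion of Theorem~\ref{mod 3} itself. It is exactly the rigidity of $A$, namely that the two coordinates cannot vary independently, that pins $J_{\frac{r}{s}}(\omega)$ down to a single $\ZZ[\omega]$-orbit and rules out the spurious values (of modulus $\sqrt7$, for instance) that would arise from treating $\cR_{\frac{r}{s}}(\omega)$ and $\cS_{\frac{r}{s}}(\omega)$ as independent sixth roots of unity.
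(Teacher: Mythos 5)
Your proposal is correct and follows essentially the same route as the paper: the paper's proof likewise applies the functional $(\omega,\ 1-\omega)$ from \eqref{eq2-6} to the vector $\bigl(\cR_{\frac{r}{s}}(\omega),\cS_{\frac{r}{s}}(\omega)\bigr)^{T}$, which lies in the set $A$ of \eqref{set A} by the proof of Theorem~\ref{mod 3}, and then concludes. Your write-up merely supplies the details the paper leaves implicit (the four evaluations $\omega,\omega^2,1,1-\omega$, the case split via $\cR_{\frac{r}{s}}(\omega)=0\Leftrightarrow 3\mid r$ from Corollary~\ref{3ZZ}, and $|1-\omega|=\sqrt3$), all of which check out.
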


\begin{proof}
The assertion easily follows from (the proof of) Theorem~\ref{mod 3}. By \eqref{eq2-6}, we have
$$J_{\frac{r}{s}}(\omega)=\begin{pmatrix}\omega &1 -\omega \end{pmatrix} \begin{pmatrix}\cR_{\frac{r}s}(\omega)\\\cS_{\frac{r}s}(\omega)\end{pmatrix}
\quad \text{and} \quad \begin{pmatrix}\cR_{\frac{r}s}(\omega)\\\cS_{\frac{r}s}(\omega)\end{pmatrix} \in A,
$$ 
where $A$ is the set given in \eqref{set A}. 
So the assertion follows.  
\end{proof}

\begin{rem}
For a general link $L$, Lickorish and Millett (\cite[Theorem~3]{LM}) showed that 
\begin{equation}\label{Lickorish-Millett}
V_L(-\omega)=\pm i^{c(L)-1}(\sqrt{3}i)^d,
\end{equation}
where $d = \mathsf{dim} H_1(\Sigma(L);\ZZ_3)$ with $\Sigma(L)$ the double cover of the $3$-sphere $\mathbb{S}^3$ branched over $L$.

By \eqref{def normalized Jones}, we have 
$$V_{\frac{r}s}(-\omega)= \pm (-\omega)^{h} J_{\frac{r}s}(\omega^{-1})$$
(note that $\omega^{-1}=\omega^2=\overline{\omega}$). 
Hence, comparing \eqref{Lickorish-Millett} with \eqref{|J(w)|}, we have 
$$
\mathsf{dim} H_1(\Sigma(L(r/s));\ZZ_3)=\begin{cases}
1 & \text{if $r$ is a multiple of 3,} \\
0 & \text{otherwise. }
\end{cases}
$$
\end{rem}

The next result can be proved similarly to Theorem~\ref{J(w)}, but we use Proposition~\ref{S(i)} this time.    

\begin{thm}\label{J(i)}
For an irreducible fraction $\frac{r}s >1$, we have 
$$
J_{\frac{r}{s}}(i) = \begin{cases} 
0 & \text{if $r \equiv 2 \pmod{4}$,} \\
\pm (1+i), \pm(1-i) & \text{if $r \equiv 0 \pmod{4}$,} \\
\pm 1, \pm i & \text{if $r \equiv 1,3 \pmod{4}$.}
\end{cases} 
$$
\end{thm}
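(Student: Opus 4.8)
The plan is to follow the proof of Theorem~\ref{J(w)} almost verbatim, replacing $\omega$ by $i$ and the set $A$ by the set $B$ produced in Proposition~\ref{S(i)}. Specializing the formula \eqref{eq2-6} at $q=i$ (the hypothesis $\frac{r}s>1$ guarantees that this formula applies) gives
\[
J_{\frac{r}{s}}(i)=\begin{pmatrix} i & 1-i \end{pmatrix}\begin{pmatrix}\cR_{\frac{r}s}(i)\\ \cS_{\frac{r}s}(i)\end{pmatrix},
\]
and by (the proof of) Proposition~\ref{S(i)} the column $\begin{pmatrix}\cR_{\frac{r}s}(i)\\ \cS_{\frac{r}s}(i)\end{pmatrix}$ lies in $B$. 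It therefore suffices to evaluate the row vector $\begin{pmatrix} i & 1-i\end{pmatrix}$ on the six generators of $B$ and then account for the scalar $\zeta\in\{\pm1,\pm i\}$.

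First I would carry out this finite computation. Evaluating $\begin{pmatrix} i & 1-i\end{pmatrix}$ on the six generators (taking $\zeta=1$), namely $\begin{pmatrix}1\\0\end{pmatrix},\begin{pmatrix}0\\1\end{pmatrix},\begin{pmatrix}1\\1\end{pmatrix},\begin{pmatrix}i\\1\end{pmatrix},\begin{pmatrix}1+i\\1\end{pmatrix},\begin{pmatrix}1\\1-i\end{pmatrix}$, yields the values $i,\ 1-i,\ 1,\ -i,\ 0,\ -i$, respectively. Scaling by $\zeta$ then shows that $J_{\frac{r}s}(i)$ can only be $0$, one of $\pm1,\pm i$ (from the generators valued at $\pm1,\pm i$), or one of $\pm(1+i),\pm(1-i)$ (from the generator valued at $1-i$).

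The decisive step is to attach the correct residue of $r$ modulo $4$ to each outcome, and here I would read it off not from $J$ but from the first coordinate $\cR_{\frac{r}s}(i)$ by means of Corollary~\ref{mod 4-i}(1). Among the six generators, the only one with $\cR$-coordinate $0$ is $\begin{pmatrix}0\\1\end{pmatrix}$, which by that corollary forces $r\equiv 0\pmod 4$ and gives $J\in\{\pm(1+i),\pm(1-i)\}$; the only one with $\cR$-coordinate in $\{\pm(1+i),\pm(1-i)\}$ is $\begin{pmatrix}1+i\\1\end{pmatrix}$, which forces $r\equiv 2\pmod 4$ and gives $J=0$; and the remaining four generators all have $\cR$-coordinate in $\{\pm1,\pm i\}$, so that $r$ is odd and $J\in\{\pm1,\pm i\}$. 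Because multiplication by $\zeta$ preserves each of the three types of $\cR$-coordinate, this partition of $B$ is well defined, and the three cases of the theorem result.

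Since Proposition~\ref{S(i)} and Corollary~\ref{mod 4-i} do all the heavy lifting, the argument is routine and I expect no genuine obstacle. The one point demanding care is the bookkeeping: the classification of $r$ modulo $4$ must be governed by $\cR_{\frac{r}s}(i)$ rather than by $J_{\frac{r}s}(i)$ directly, and one must verify that the three $\cR$-types ($0$, norm $2$, and unit) are realized by disjoint families of generators, so that the case division is both consistent and exhaustive.
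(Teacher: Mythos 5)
Your proposal is correct and is exactly the argument the paper intends: the paper's proof of Theorem~\ref{J(i)} is the single sentence that it ``can be proved similarly to Theorem~\ref{J(w)}, but we use Proposition~\ref{S(i)} this time,'' and you have carried out precisely that computation, pairing the row vector $\begin{pmatrix} i & 1-i\end{pmatrix}$ with the set $B$ and using Corollary~\ref{mod 4-i}(1) on the $\cR$-coordinate to sort the cases by $r \bmod 4$. All six evaluations and the case bookkeeping check out.
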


\begin{rem}
For a general link $L$, Murakami \cite{M} (see also \cite[Theorem~1]{LM}) showed that 
$$
V_L(i) =\begin{cases}
(-\sqrt{2})^{c(L)-1}(-1)^{\Arf(L)} & \text{if $\Arf(L)$ exists,} \\
0 & \text{otherwise.}
\end{cases}
$$
Comparing this equation with Theorem~\ref{J(i)}, we see that $\Arf(L{(\frac{r}{s})})$ exists if and only if $r \not \equiv 2 \pmod{4}$. {We were unable to find this statement
in literature, but it must be possible to prove it directly. }
\end{rem}

For a general link $L$, it is known that $V_L(\omega)=(-1)^{c(L)-1}$. Hence, for a rational link $L(\frac{r}s)$, we have $V_{\frac{r}s}(\omega)=(-1)^{r-1}$ and hence 
\begin{equation}\label{J(-w)}
J_{\frac{r}s}(-\omega) \in \{\pm 1, \pm \omega, \pm \omega^2\}. 
\end{equation}
We can give a new interpretation to this equation using $q$-deformed rationals. 
Note that $ M_q^-(c)|_{q=-\omega}$ is of the form 
\[
\begin{array}{llll}
X_0:=\begin{pmatrix}0&\omega^2\\1&0 \end{pmatrix}  &  \text{if $c \equiv 0 \pmod{6}$,}  \\[15pt] 
X_1:=\begin{pmatrix}1&-1\\1&0 \end{pmatrix} & \text{if $c \equiv 1 \pmod{6}$,} \\[15pt] 
X_2:=\begin{pmatrix}1-\omega&\omega\\1&0 \end{pmatrix} &  \text{if $c \equiv 2 \pmod{6}$,} \\[15pt] 
X_3:=\begin{pmatrix}1-\omega+\omega^2&-\omega^2\\1&0 \end{pmatrix}  &  \text{if $c \equiv 3 \pmod{6}$,} \\[15pt] 
X_4:=\begin{pmatrix}-\omega+\omega^2&1 \\1&0 \end{pmatrix} & \text{if $c \equiv 4 \pmod{6}$,} \\[15pt] 
X_5:=\left( \begin{array}{cccc}\omega^2&-\omega \\1&0 \end{array}\right)  &   \text{if $c \equiv 5 \pmod{6}$.}
\end{array}
\]
By \eqref{eq2-6}, for $\alpha=[[c_1, \ldots, c_l]]$, we have 
$$\begin{pmatrix}
J_\alpha(-\omega)&  *
\end{pmatrix}=
\begin{pmatrix}
-\omega&  1+\omega
\end{pmatrix} \cdot \left(  M_q^-(c_1) M_q^-(c_2) \cdots M_q^-(c_l)\right)|_{q=-\omega},$$
where $\begin{pmatrix}
J_\alpha(-\omega)&  *
\end{pmatrix}$ and $\begin{pmatrix}
-\omega&  1+\omega
\end{pmatrix}$ are $1 \times 2$ matrices, and $\cdot$ means the product of matrices. 
Easy calculation shows that  $\begin{pmatrix}
-\omega&  1+\omega
\end{pmatrix}=-\omega\begin{pmatrix}
1&  \omega
\end{pmatrix}$ and there exists $\zeta_i \in \{\pm 1, \pm \omega, \pm \omega^2 \}$ such that
$$\begin{pmatrix}
1 & \omega\end{pmatrix} \cdot  X_i = \zeta_i \begin{pmatrix}
1 & \omega\end{pmatrix} $$
for each $0 \le i \le 5$. So we can show \eqref{J(-w)} by induction on $l$.

\begin{rem}\label{S(-w)}
In the above notation, the matrix $X_3$ is not diagonalizable, and hence $X_3^n \ne E_2$ for all positive integers $n$. It means that the subgroup of $\mathsf{GL}(2,\CC)$ generated by $X_3$ is infinite, and hence $\{\cS_\alpha(-\omega) \mid \alpha \in \QQ \}$ is an infinite set.   
\end{rem}

\end{document}